\begin{document}
\title{Fractional $p$- Laplacian Kirchhoff-type problem involving a Singular term via Nehari manifold}{}
\date{}
\maketitle
\numberwithin{equation}{section}
\allowdisplaybreaks
\newtheorem{theorem}{Theorem}[section]
\newtheorem{proposition}{Proposition}[section]
 \newtheorem{gindextheorem}{General Index Theorem}[section]
\newtheorem{indextheorem} {Index Theorem}[section]
\newtheorem{standardbasis}{Standard Basis}[section]
\newtheorem{generators} {Generators}[section]
\newtheorem{lemma} {Lemma}[section]
\newtheorem{corollary}{Corollary}[section]
\newtheorem{example}{Example}[section]
\newtheorem{examples}{Examples}[section]
\newtheorem{exercise}{Exercise}[section]
\newtheorem{remark}{Remark}[section]
\newtheorem{remarks}{Remarks}[section]
\newtheorem{definition} {{Definition}}[section]
\newtheorem{definitions}{Definitions}[section]
\newtheorem{notation}{Notation}[section]
\newtheorem{notations}{Notations}[section]
\newtheorem{defnot}{Definitions and Notations}[section]

\def\H{{\mathbb H}}
\def\N{{\mathbb N}}
\def\R{{\mathbb R}}

\newcommand{\be} {\begin{equation}}
\newcommand{\ee} {\end{equation}}
\newcommand{\bea} {\begin{eqnarray}}
\newcommand{\eea} {\end{eqnarray}}
\newcommand{\Bea} {\begin{eqnarray*}}
\newcommand{\Eea} {\end{eqnarray*}}
\newcommand{\p} {\partial}
\newcommand{\ov} {\over}
\newcommand{\al} {\alpha}
\newcommand{\ba} {\beta}
\newcommand{\de} {\delta}
\newcommand{\ga} {\gamma}
\newcommand{\Ga} {\Gamma}
\newcommand{\Om} {\Omega}
\newcommand{\om} {\omega}
\newcommand{\De} {\Delta}
\newcommand{\la} {\lambda}
\newcommand{\si} {\sigma}
\newcommand{\Si} {\Sigma}
\newcommand{\La} {\Lambda}
\newcommand{\no} {\nonumber}
\newcommand{\noi} {\noindent}
\newcommand{\lab} {\label}
\newcommand{\na} {\nabla}
\newcommand{\vp} {\varphi}
\newcommand{\var} {\varepsilon}
\newcommand{\RR}{{\mathbb R}}
\newcommand{\CC}{{\mathbb C}}
\newcommand{\NN}{{\mathbb N}}
\newcommand{\ZZ}{{\mathbb Z}}
\renewcommand{\SS}{{\mathbb S}}
\newcommand{\esssup}{\mathop{\rm {ess\,sup}}\limits}   
\newcommand{\essinf}{\mathop{\rm {ess\,inf}}\limits}   
\newcommand{\weaklim}{\mathop{\rm {weak-lim}}\limits}
\newcommand{\wstarlim}{\mathop{\rm {weak^\ast-lim}}\limits}

\newcommand{\RE}{\Re {\mathfrak e}}   
\newcommand{\IM}{\Im {\mathfrak m}}   
\renewcommand{\colon}{:\,}
\newcommand{\eps}{\varepsilon}
\newcommand{\half}{\textstyle\frac12}
\newcommand{\Takac}{Tak\'a\v{c}}
\newcommand{\eqdef}{\stackrel{{\rm {def}}}{=}}   
\newcommand{\wstarconverge}{\stackrel{*}{\rightharpoonup}}

\newcommand{\Div}{\nabla\cdot}
\newcommand{\Curl}{\nabla\times}
\newcommand{\Meas}{\mathop{\mathrm{meas}}}
\newcommand{\Int}{\mathop{\mathrm{Int}}}
\newcommand{\Clos}{\mathop{\mathrm{Clos}}}
\newcommand{\Lin}{\mathop{\mathrm{lin}}}
\newcommand{\Dist}{\mathop{\mathrm{dist}}}

\newcommand{\Square}{$\sqcap$\hskip -1.5ex $\sqcup$}
\newcommand{\Blacksquare}{\vrule height 1.7ex width 1.7ex depth 0.2ex }
\begin{center}
DJAMEL Abid\\
\bf{E-mail:} djamelabid0312@gmail.com, djamel.abid@univ-setif.dz
\end{center}

\begin{abstract}
this paper is dedicated to studying the existence of nontrivial positive solutions for the following Kirchhoff-type problem with sign change nonlinearities and a singular term.
\begin{equation*}
\begin{cases}
M( \int_{\Omega} \frac{\vert u(x) - u(y) \vert^{p}}{\vert x-y \vert^{n + ps}}dxdy)\left(-\Delta\right)_{p}^{s}u + u^{p-1} = c(x)u^{-\alpha} + \lambda f(x,u) & \textit{in} \quad \Omega\\
u = 0 &  \textit{in} \quad \mathbb{R}^n\Omega.
\end{cases}
\end{equation*}
where  $\Omega \subset \mathbb{R}^n ( n>ps ),$ is a bounded smooth  domain,  $\alpha, s\in (0,1),$ $ \lambda $ is positive parameter $ c :\Omega \longrightarrow \mathbb{R}^+,$ is a nonnegative function with $ c(x) \in L^{\infty},$ and $ f:\Omega\times [0,\infty)\longrightarrow \mathbb{R},$ is  homogenous of order $ q-1,$ snd $ M$ is the Kirchhoff function. Using the Nehari manifold and EkelandS variational principle
we prove that for the appropriate choice of $\lambda$ our problem has at least two positive solutions for both subcritical and critical cases.\newline
\textbf{keyword:} Nehari manifold, fibre maps, Kirchhoff type problem, Singularity, 
\end{abstract}
\section{Introduction}
In this paper, we study the existence and multiplicity of nonnegative solutions for the following Kirchhoff-type problem with a singular term and sign-changing nonlinearity
\begin{equation} \label{p0}
\begin{cases}
M( \int_{\Omega} \frac{\vert u(x) - u(y) \vert^{p}}{\vert x-y \vert^{n + ps}}dxdy)\left(-\Delta\right)_{p}^{s}u + u^{p-1}u = c(x)u^{-\alpha} + \lambda f(x,u) & \textit{in} \quad \Omega\\
u = 0 &  \textit{in} \quad \mathbb{R}^n\Omega.
\end{cases}
\end{equation}
where  $\Omega \subset \mathbb{R}^n,$ is an open bounded smooth domain, $ n > ps,$ with $ s \in (0,1),$ and $ p_s^* = \frac{np}{n-ps},$ is the critical Sobolev exponent, $ \lambda,$ is a positive parameter, the exponent $ \alpha \in (0,1),$ and $ \left(-\Delta\right)_{p}^{s}$ is the fractional $ p$-Lapacian defined as 
\begin{equation*}
 \left(-\Delta\right)_{p}^{s} \phi(x) = \lim_{\epsilon \to 0}\int_{\mathbb{R}^n\setminus B(x,\epsilon)}\frac{\vert \phi(x) -\phi(y)\vert^{p -2}(\phi(x) -\phi(y))}{\vert x -y\vert^{n +ps}}dy,
\end{equation*}
for any $ \phi \in \mathbb{C}_{0}^{\infty}(\mathbb{R}^{n}),$ where  $ B(x,\epsilon)$ is the ball in $ \mathbb{R}^n$ with center $ x$ and radius $ \epsilon.$
for further details on the fractional Laplacian see \cite{di2012hitchhikers}, \cite{xiang2016existence}\newline
Let $ M : (0,+\infty) \longrightarrow (0, +\infty),$ the non-degenerate Kirchhoff function defined as 
\begin{equation}\label{kf}
M(t) = a + b t^{\theta-1},  a > 0 , b> 0 , \textit{for all} \,\,\, t \geq 0 \,\,\, \textit{and} \,\,\, \theta > 1.
\end{equation}
and $ f$ is homogenous function of order $ q-1,$ where $ F(x,u) = \int_{0}^{u} f(x,s)ds,$ is homogenous of order $ q,$ so the Euler identity satisfied, that is 
\begin{equation}\label{Euler}
q F(x,u) = uf(x,u). 
\end{equation}
And by \ref{Euler} we can easily prove that there exist $ \gamma_0 >0$ such that
\begin{equation*}
\vert F(x,u) \vert \leq \gamma_0 \vert u \vert^{q} \leq \gamma \vert u \vert^{q}
\end{equation*}
and without any loss of generality we can take$ \gamma = \gamma_0 +1.$ Then by Holder ineauality we get
\begin{equation}\label{e1}
\int F(x,u) \leq \gamma \Vert u \Vert_{q}^{q} \leq \gamma \vert \Omega \vert^{\frac{p_s^*-q}{p_s^*}}S_{p}^{-\frac{q}{p}} \Vert u \Vert^{q}
\end{equation}
Suppose that for $ t_1, t_2 \in \mathbb{R}^*_+$  we have 
\begin{equation}\label{fd}
\sup_{x \in B_{\delta}(0)}\left\lbrace f(x,t_1)t_2\right\rbrace \leq M 
\end{equation} 
and
\begin{equation}\label{fd2}
0 <\inf_{t >0}\frac{f(x,t)}{t}, \quad \textit{uniformly in} \quad x \in B_{\delta}(0) \subset \Omega,
\end{equation}
for some $ \delta >0,$ also 
\begin{equation}\label{Fd}
F(x, t_1 +t_2) -F(x, t_1) -F(x, t_2) > M,
\end{equation}
and 
\begin{equation}\label{fd1}
\lim_{t \to 0}\frac{F(x,t)}{t^{q}} =\infty, \quad \textit{ uniformly for}  \quad x \in B_{\delta}(0),
\end{equation}
where $ B_{\delta}(0),$ is the ball in $ \mathbb{R}^{n}$ centered at $ 0$ with raduis $ \delta.$ And by Holder inequality we get
\begin{equation}\label{e2}
\int c(x)\vert u\vert^{-\alpha+1} \leq \Vert c \Vert_{L^{\infty}} \Vert u \Vert_{-\alpha +1}^{-\alpha +1} \leq \Vert c \Vert_{L^{\infty}}
\vert \Omega\vert^{\frac{p_s^* +\alpha -1}{p_s^*}}S_{p}^{-\frac{-\alpha+1}{p}} \Vert u \Vert^{-\alpha+1}
\end{equation}
and 
Where $ S_{p}$ is the fractional Sobolev constant defined as
\begin{equation}\label{e3}
S_{p} =\inf_{u \in X_0\setminus \{ 0\}}\frac{\Vert u\Vert^{p}}{\Vert u\Vert^{p}_{p_s^*}}
\end{equation}
Moreover by \cite{mosconi2016brezis},  the function $ u_{\epsilon}$ for $ \epsilon >0$ defined as
\begin{equation}\label{eee1}
u_{\epsilon}(x) =\epsilon^{-\frac{n -ps}{p}}\bar{u}(\frac{\vert x\vert}{\epsilon})\,\, \textit{with} \,\, \bar{u}(x) =\frac{1}{(1 +\vert x\vert^{p^{\prime}})^{\frac{n -ps}{p}}} \,\,\textit{and}\,\, p^{\prime} =\frac{p}{p -1}, \,\, x \in \mathbb{R}^n.
\end{equation}
is a local minimizer for $ S_{p}$ satisfying
\begin{equation}\label{eee2}
\Vert u_{\epsilon}\Vert^{p} =\Vert u_{\epsilon}\Vert_{p_s^*}^{p} =S_{p}^{\frac{n}{sp}}.
\end{equation}
Put
\begin{equation*}
u^+ =\max\{0, u\} \quad \textit{and} \quad u^- =\max\{0, -u\}.
\end{equation*}
\begin{equation*}
X^{-}=\left\lbrace  u \in X_0 : F(x,u) \leq 0 \right\rbrace, \quad X^{+} =\left\lbrace  u \in X_0 : F(x,u) > 0 \right\rbrace,
\end{equation*}
\begin{equation*}
\Omega^{-} =\left\lbrace  x \in \Omega : F(x,u) \leq 0 \right\rbrace, \quad \Omega^{+} =\left\lbrace  x \in \Omega : F(x,u) > 0 \right\rbrace,
\end{equation*}
Recently problems of elliptic equations involving critical exponents or subcritical exponents with the singular term have been widely studied, such as in \cite{saoudi2017multiplicity} the authors studied the existence and multiplicity of positive solutions for the subcritical case to the following problem by the Nehari method
\begin{equation}
\begin{cases}
-div\left( M(x)\nabla u\right) =u^{-\gamma} +\lambda u^{p}, & \quad \textit{in} \quad \Omega,\\
u\vert_{\partial \Omega} = 0, \quad u > 0, & \quad \textit{in}  \quad \Omega, 
\end{cases}
\end{equation} 
for more about the subcritical case see \cite{hirano2004existence}, in particular in \cite{ghanmi2016nehari} the authors by the same approach established the existence of at least two solutions to the following singular problem
\begin{equation}
\begin{cases}
\left(-\Delta\right)^{s} =a(x)u^{-\gamma} +\lambda f(x,u), & \quad \textit{in} \quad \Omega,\\
u = 0, & \quad \textit{in} \quad \mathbb{R}^n\setminus\Omega,
\end{cases}
\end{equation}
In \cite{arora2020polyharmonic} the authors studied a Kirchhoff type Choquard problem involving a critical exponential non-linearity and singular weights using mountain pass theorem, for more details about the subcritical case see also \cite{arcoya2014multiplicity},\cite{godoy2017multiplicity},\cite{choi2019existence},
\cite{figueiredo2012multiplicity}. In \cite{fiscella2019nehari} the authors used the Nehari manifold to obtaining existence and multiplicity for Kirchhoff problem involving singular and critical terms, for further details for problems involving critical and singular terms see \cite{giacomoni2009multiplicity},\cite{lei2015multiple},\cite{mukherjee2016fractional}\cite{giacomoni2017positive}m\cite{fiscella2019fractional}, \cite{autuori2015stationary}, \cite{giacomoni2009multiplicity} and references therein, for singular and supercritical nonlinearity, see \cite{boccardo2012dirichlet},
\cite{assunccao2017existence}.\newline
Inspired by works mentioned above, mostly \cite{fiscella2019nehari}, \cite{ghanmi2016multiplicity} and \cite{saoudi2017multiplicity} we prove the existence and multiplicity solutions for problem \ref{p}, using the Nehari manifold, fibre maps and some variational technique...\newline
we will deal with the following Kirchhoff type problem
\begin{equation}\label{p}
\begin{cases}
\left( a +b\Vert u\Vert^{p(\theta -1)}\right)\left(-\Delta\right)_{p}^{s}u +(u^+)^{p-2}u^+ = c(x)(u^+)^{-\alpha} + \lambda f(x,u^+) & \textit{in} \quad \Omega\\
u = 0 &  \textit{in} \quad \mathbb{R}^n\setminus \Omega.
\end{cases}
\end{equation}
our first result is about the subcritical case
\begin{theorem}\label{th1}
Suppose that $ 1 < p < q < p_s^*,$ $ 0 <\alpha<1,$  and $ f$ satisfying \ref{Euler}, \ref{e1} Then there exist $ \lambda^* > 0$ such that for any $ \lambda \in (0, \lambda^*),$ problem \ref{p0} has at least two nontrivial positive solutions.
\end{theorem}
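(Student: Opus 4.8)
The plan is to run the fibering-map/Nehari-manifold scheme together with Ekeland's variational principle, exploiting that the subcritical range $1<p<q<p_s^*$ gives compactness. First I would associate to \eqref{p} the energy functional
\[
J_\lambda(u) = \frac{a}{p}\|u\|^p + \frac{b}{p\theta}\|u\|^{p\theta} + \frac{1}{p}\|u^+\|_p^p - \frac{1}{1-\alpha}\int_\Omega c(x)(u^+)^{1-\alpha}\,dx - \lambda\int_\Omega F(x,u^+)\,dx,
\]
and the Nehari manifold $\mathcal{N}_\lambda=\{u\in X_0\setminus\{0\}:\langle J_\lambda'(u),u\rangle=0\}$. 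For fixed $u\ge 0$ I would study the fibering map $\varphi_u(t)=J_\lambda(tu)$, whose derivative, using the $q$-homogeneity of $F$ from \eqref{Euler}, is
\[
\varphi_u'(t) = t^{p-1}\big(a\|u\|^p+\|u^+\|_p^p\big) + b\,t^{p\theta-1}\|u\|^{p\theta} - t^{-\alpha}\int_\Omega c(x)(u^+)^{1-\alpha} - \lambda q\,t^{q-1}\int_\Omega F(x,u^+).
\]
Because the singular term forces $\varphi_u'(t)\to-\infty$ as $t\to0^+$, the fibre always decreases near the origin; the task is to show that for $u$ with $\int_\Omega F(x,u^+)>0$ the map has both a local minimum and a subsequent critical point, which produces the usual splitting $\mathcal{N}_\lambda=\mathcal{N}_\lambda^+\cup\mathcal{N}_\lambda^0\cup\mathcal{N}_\lambda^-$ according to the sign of $\varphi_u''(1)$.

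I would then extract the threshold $\lambda^*$. Evaluating the constraint $\varphi_u'(1)=0$ together with $\varphi_u''(1)=0$ and normalizing (e.g. restricting to $\|u\|=1$), the degenerate set $\mathcal{N}_\lambda^0$ forces an algebraic relation between the coefficients $a\|u\|^p+\|u^+\|_p^p$, $b\|u\|^{p\theta}$, $\int c(x)(u^+)^{1-\alpha}$ and $\lambda\int F(x,u^+)$; using the Sobolev estimates \eqref{e1} and \eqref{e2} to bound these quantities, I would obtain an explicit $\lambda^*>0$ below which this relation cannot hold, so that $\mathcal{N}_\lambda^0=\{0\}$ and $\mathcal{N}_\lambda^+$, $\mathcal{N}_\lambda^-$ are genuinely separated. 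In the same step I would check that $J_\lambda$ is coercive and bounded below on $\mathcal{N}_\lambda$: the leading term $\tfrac{b}{p\theta}\|u\|^{p\theta}$ dominates the sublinear singular contribution (order $1-\alpha<1$) controlled by \eqref{e2}, and the superlinear term of order $q$ is absorbed on the manifold via the constraint.

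Next I would set $c_\lambda^{\pm}=\inf_{\mathcal{N}_\lambda^{\pm}}J_\lambda$ and apply Ekeland's variational principle on each piece to produce minimizing sequences $(u_n^{\pm})$ that are almost critical. Coercivity gives boundedness in $X_0$, and since $q<p_s^*$ the embedding $X_0\hookrightarrow L^q(\Omega)$ is compact, so up to subsequences $u_n^{\pm}\to u_{\pm}$ strongly in $L^q$ and a.e.; a standard argument (testing with the sequence and using the $(S_+)$ property of the fractional $p$-Laplacian together with the nonlocal Kirchhoff factor $a+b\|\cdot\|^{p(\theta-1)}$) upgrades this to strong convergence in $X_0$, whence $u_{\pm}\in\mathcal{N}_\lambda^{\pm}$ attain $c_\lambda^{\pm}$. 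I would also verify $c_\lambda^+<0$, which guarantees $u_+\not\equiv 0$, and that $u_+\ne u_-$ because they lie in the disjoint sets $\mathcal{N}_\lambda^{\pm}$.

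The main obstacle is that the singular term $c(x)(u^+)^{-\alpha}$ makes $J_\lambda$ non-differentiable, so $J_\lambda'(u_{\pm})=0$ is not available directly. I would handle this by a one-sided variation: for $0\le\psi\in X_0$ and small $t>0$, insert $u_{\pm}+t\psi$ into the minimization, use minimality on $\mathcal{N}_\lambda^{\pm}$, and let $t\to0^+$ to get a variational inequality; the convexity/monotonicity in the regular terms and Fatou on the singular term then yield the weak formulation of \eqref{p}. Positivity ($u_{\pm}>0$ a.e.) would follow by testing with $u_{\pm}^-$ to kill the negative part (the truncation $(u^+)$ in \eqref{p} ensures the singular and nonlinear terms see only $u^+$), after which $u_{\pm}$ solves the original problem \eqref{p0}. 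I expect this non-smooth passage to the Euler–Lagrange equation, rather than the compactness, to be the delicate point.
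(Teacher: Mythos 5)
Your proposal follows essentially the same route as the paper: fibering-map analysis of $\varphi_u$ to split $\mathcal{N}_\lambda$ into $\mathcal{N}_\lambda^{\pm}$ and $\mathcal{N}_\lambda^{0}$, an explicit threshold $\lambda^*$ forcing $\mathcal{N}_\lambda^{0}=\{0\}$, coercivity of $J_\lambda$ on $\mathcal{N}_\lambda$, Ekeland's principle on each component, and a one-sided variation plus Fatou's lemma to recover the weak formulation despite the non-differentiability caused by the singular term (the paper implements this via the implicit-function projection $\zeta(\phi)(u+\phi)$ back onto the manifold and the bound on $\langle\zeta'(0),\Phi\rangle$, which is the precise form of your ``insert $u_\pm+t\psi$ and use minimality'' step). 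The argument is correct and matches the paper's proof in structure and in all essential ideas.
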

\begin{theorem}\label{th2}
Suppose that $ 1 < p < q = p_s^*$ and $ s, \alpha \in (0 ,1),$ then 
\begin{itemize}
\item theree exist $ \lambda^{**} >0$ such that problem \ref{p0} has one positive solution for $ \lambda < \lambda^{**},$
\item there exist $ \lambda^{***} >0$ such that problem \ref{p0} has two positive solutions for $ \lambda < \lambda^{***}.$
\end{itemize}
Moreover, in the first case the solution has negative energy, and in the second case our two solutions for \ref{p0} are solutions to \ref{p} for 
appreoprete values of $ b >0.$
\end{theorem}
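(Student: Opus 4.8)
The plan is to attach to the auxiliary problem \ref{p} the energy functional
\begin{equation*}
I_\lambda(u) = \frac{a}{p}\|u\|^p + \frac{b}{p\theta}\|u\|^{p\theta} + \frac{1}{p}\int_\Omega (u^+)^p\,dx - \frac{1}{1-\alpha}\int_\Omega c(x)(u^+)^{1-\alpha}\,dx - \lambda\int_\Omega F(x,u^+)\,dx,
\end{equation*}
whose nonnegative critical points are exactly the positive solutions of \ref{p0}. Since the singular term is only continuous, I would not differentiate $I_\lambda$ directly but instead work through the fibre maps $\phi_u(t)=I_\lambda(tu)$, which are smooth in $t>0$ because $\int_\Omega c(x)(tu^+)^{1-\alpha}=t^{1-\alpha}\int_\Omega c(x)(u^+)^{1-\alpha}$ and, by the homogeneity \ref{Euler}, $\int_\Omega F(x,tu^+)=t^{q}\int_\Omega F(x,u^+)$. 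Writing $\phi_u'(t)$ one sees the competing exponents $-\alpha<0<p-1<p\theta-1<q-1$ (the last inequality being the standard subcriticality $p\theta<p_s^*$), so for each admissible direction $\phi_u$ has at most two positive critical points; this furnishes the Nehari manifold $\mathcal{N}_\lambda=\{u\neq 0:\phi_u'(1)=0\}$ together with its decomposition $\mathcal{N}_\lambda=\mathcal{N}^+\cup\mathcal{N}^0\cup\mathcal{N}^-$ according to the sign of $\phi_u''(1)$. Using the H\"older bounds \ref{e1} and \ref{e2} I would show that for $\lambda$ small $\mathcal{N}^0=\{0\}$, so $\mathcal{N}^+$ and $\mathcal{N}^-$ are nonempty $C^1$ manifolds and $I_\lambda$ is bounded below on each.

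For the first assertion I minimize $I_\lambda$ over $\mathcal{N}^+$. Testing with $tu$ for $t$ near the first (local minimum) critical value of $\phi_u$, the singular term $-\frac{t^{1-\alpha}}{1-\alpha}\int_\Omega c(x)(u^+)^{1-\alpha}$ dominates the higher powers, so $c^+:=\inf_{\mathcal{N}^+}I_\lambda<0$. Applying Ekeland's variational principle on $\mathcal{N}^+$ produces a minimizing Palais--Smale sequence; because $\mathcal{N}^+$ stays uniformly away from $\mathcal{N}^0$ and the associated energy level is negative, hence below the first compactness threshold, the sequence is bounded and converges to a minimizer $u_0\geq 0$ with $I_\lambda(u_0)=c^+<0$. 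A strong maximum principle for the fractional $p$-Laplacian then gives $u_0>0$, yielding one positive solution of negative energy for $\lambda<\lambda^{**}$.

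The second solution, for $\lambda<\lambda^{***}$, comes from minimizing over $\mathcal{N}^-$, and here the critical exponent $q=p_s^*$ is the main obstacle: the embedding $X_0\hookrightarrow L^{p_s^*}$ is not compact, so a minimizing sequence for $c^-:=\inf_{\mathcal{N}^-}I_\lambda$ need not converge. The resolution is a local Palais--Smale analysis showing that $(PS)_c$ holds for every $c$ strictly below an explicit threshold $c^*$ built from the Sobolev constant $S_p$ of \ref{e3}, the Kirchhoff data $a,b,\theta$, and a correction controlled by $\lambda$ and $\|c\|_{L^\infty}$. The decisive and most delicate step is to certify $c^-<c^*$. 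I would do this by inserting the Aubin--Talenti type extremals $u_\epsilon$ of \ref{eee1}--\ref{eee2}, normalized by \ref{eee2}, into a test path on $\mathcal{N}^-$ and estimating $\max_{t>0}I_\lambda(tu_\epsilon)$ as $\epsilon\to 0$; the local hypotheses \ref{fd2} and \ref{fd1} on $f$ over $B_\delta(0)$ are precisely what makes the lower-order nonlinear contribution strictly outweigh the critical defect. Because the Kirchhoff degree $p\theta$ competes with $p_s^*$ in this expansion, the strict inequality survives only when $b$ lies in a suitable range, which is the origin of the ``appropriate values of $b$'' restriction. Once $c^-<c^*$ is secured, Ekeland's principle on $\mathcal{N}^-$ delivers a $(PS)_{c^-}$ sequence converging to a second solution $u_1>0$, necessarily distinct from $u_0$ since it lies on $\mathcal{N}^-$ while $u_0\in\mathcal{N}^+$. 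Finally, as the whole construction is carried out for the positive-part problem \ref{p}, both $u_0$ and $u_1$ are nonnegative and, after the maximum principle, positive, solving \ref{p} for the admissible $b$ and hence \ref{p0} through the identification $\|u\|^{p(\theta-1)}=\big(\int_{\Omega}\frac{|u(x)-u(y)|^{p}}{|x-y|^{n+ps}}\,dx\,dy\big)^{\theta-1}$.
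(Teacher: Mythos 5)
Your plan follows the paper's proof essentially step for step: the Nehari decomposition with $\mathcal{N}_\lambda^{0}=\{0\}$ for small $\lambda$, Ekeland minimization on $\mathcal{N}_\lambda^{+}$ giving a negative-energy solution whose level lies below the compactness threshold of Proposition \ref{pr1}, and a Brezis--Nirenberg-type estimate with the extremals $u_\epsilon$ (with \ref{fd1}--\ref{fd2} supplying the decisive lower-order contribution and $b$ constrained so the Kirchhoff term does not destroy the strict inequality) to place $\inf_{\mathcal{N}_\lambda^{-}}J_\lambda$ under that threshold. The only execution differences are that the paper runs the threshold estimate along translates $u_0+r u_{\epsilon,\sigma}$ of the first solution via a Tarantello-type $U_1/U_2$ splitting, with the explicit choice $b=\epsilon^{m}$, $m>\tfrac{n-ps}{p-1}$, and that the passage from the Ekeland minimizing sequence to a weak solution of the singular equation is carried out through the implicit-function construction of Lemmas \ref{l4}, \ref{l7} and \ref{l8} rather than by treating $\mathcal{N}_\lambda^{\pm}$ as $C^{1}$ manifolds, since $J_\lambda$ is not differentiable.
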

\section{Preliminaries}
We will find our solutions in the space that takes into consideration the interaction between $ \Omega,$ and its complementary which gives a positive contribution in the 
Gagliardo norm associated to the fractional Sobolev space $ \mathcal{H}^{s} = W^{s,2},$  Hence we define the space 
\begin{equation}
X_0 = \left\lbrace  u \in \mathcal{H}^{s}(\mathbb{R}^n) : u = 0 \,\, \textit{a.e.} \,\, \textit{in} \,\, \mathbb{R}^n \setminus \Omega \right\rbrace.
\end{equation}
Endowed with the following norm
\begin{equation}
\Vert u \Vert = \left( \int_{\mathbb{R}^{2n}} \frac{\vert u(x) - u(y)\vert^{p}}{\vert x - y \vert^{n+ ps}}dxdy \right)^{\frac{1}{p}}.
\end{equation}
We say that $ u$ is a weak solution to problem \ref{p} if  $ c(x) \vert u \vert^{-\alpha} \phi(x) \in L^{1}(\Omega),$ and for any $ \phi \in X_0,$
\begin{equation}\label{weak}
\begin{aligned}
& M(\Vert u \Vert^{p}) \int_{\Omega} \frac{\vert u(x) - u(y) \vert^{p-2}(u(x) - u(y))(\phi(x) - \phi(y))}{\vert x-y \vert^{n + ps}}\\
& + \int_{\Omega}  (u^+)^{p-2} u \phi(x) dx  - \int_{\Omega} c(x) (u^+)^{-\alpha} \phi(x) - 
\lambda \int_{\Omega} f(x,u^+) \phi(x)dx = 0.
\end{aligned}
\end{equation}
And the energy funcational associated to \ref{p} is $ J_{\lambda} : X_0 \longrightarrow \mathbb{R},$ defined as
\begin{equation}\label{j}
J_{\lambda}(u) = \bar{M}(\Vert u \Vert^{p}) + \frac{1}{p}\int_{\Omega}(u^+)^{p}dx  - \frac{1}{-\alpha+1} \int_{\Omega}c(x)(u^+)^{-\alpha+1} dx - \lambda \int_{\Omega} F(x,u^+) dx .
\end{equation}
where $ \bar{M} =\int_{0}^{t}M(s)ds$, we notice that the funcational $ J_{\lambda}$ is continuous but not differential on $ X_0.$ due the singular term, Now
We define the fiber map $ \phi : \mathbb{R}^+ \longrightarrow \mathbb{R}$ as $ \phi_{u}(t) = J_{\lambda}(tu),$ that is to say
\begin{equation*}
\phi(t) =  \frac{t^P}{p} \left(a \Vert u \Vert^p + \Vert u \Vert^{p}\right) + \frac{b t^{p \theta}}{p \theta}\Vert u \Vert^{p \theta}  -\frac{t^{1-\alpha}}{-\alpha+1} \int c(x)(u^+)^{-\alpha+1} - \lambda t^q \int F(x,u^+) 
\end{equation*}
then we have
\begin{equation}\label{ph1}
\phi^{\prime}(t) = t^{p-1}\left( a \Vert u \Vert^p + \Vert u \Vert_{p}^{p}\right) + b t^{p \theta - 1}\Vert u \Vert^{p \theta}  -t^{-\alpha} \int c(x)(u^+)^{-\alpha+1} - \lambda q t^{q-1} \int F(x,u^+) 
\end{equation}
and 
\begin{equation}\label{ph2}
\begin{aligned}
& \phi^{\prime \prime}(t) = (p-1)t^{p-2} \left(\Vert u \Vert^p + \Vert u \Vert_{p}^{p}\right) + b(p \theta - 1) t^{p \theta - 2}\Vert u \Vert^{p \theta} + \alpha t^{-\alpha -1} \int c(x)(u^+)^{-\alpha+1} \\
&- \lambda q (q-1) t^{q-2} \int F(x,u^+) 
\end{aligned}
\end{equation}
So $ u \in \mathcal{N}_{\lambda} $ if and only if $ \phi^{\prime}_{u}(1)=0,$ it means that elements in $ \mathcal{N}_{\lambda} $ are related to stationary 
points of $ \phi_{u}.$ In order to obtain multiplicity of solutions, we split $\mathcal{N}_{\lambda ,\mu }$  into the following three parts 
\begin{eqnarray*}
\mathcal{N}_{\lambda}^{+} &=&\left\{ u\in \mathcal{N}_{\lambda}:\varphi _{u}^{\prime \prime }(1)>0\right\} =\left\{ u\in X_0:\varphi _{u}^{\prime}(1)=0 \;\mbox{and}\;\varphi _{u}^{\prime \prime }(1)>0\right\}, \\
&& \\
\mathcal{N}_{\lambda}^{-} &=&\left\{ u\in \mathcal{N}_{\lambda }:\varphi _{u}^{\prime \prime }(1)<0\right\}=\left\{ u\in X_0:\varphi _{u}^{\prime}(1)=0 \;\mbox{and}\;\varphi _{u}^{\prime \prime }(1)<0\right\} , \\
&& \\
\mathcal{N}_{\lambda}^{0} &=&\left\{ u\in \mathcal{N}_{\lambda}:\varphi _{u}^{\prime \prime }(1)=0\right\}=\left\{ u\in X_0:\varphi _{u}^{\prime}(1)=0 \;\mbox{and}\;\varphi _{u}^{\prime \prime }(1)=0\right\} .
\end{eqnarray*}
\begin{lemma}\label{l1}
\begin{itemize}
\item Let $ u \in X_+,$ there exist $ \lambda_* > 0,$ and $ t_m, t_1, t_2,$ with $ 0 < t_1 < t_m < t_2,$ such that $ t_1u \in \mathcal{N}_{\lambda}^{+}, t_2u \in \mathcal{N}_{\lambda}^{-},$ for $ \lambda \in (0, \lambda_*).$ Moreover, $ J_{\lambda}(t_1u),$ is the minimum value of $ J_{\lambda}(tu), t \leq t_1,$ and 
$ J_{\lambda}(t_2u),$ is maximum value of  $ J_{\lambda}(tu), t \geq t_m.$
\item If $ u \in X_-,$ and $ \lambda > 0,$ there exist  $ t_3 > 0,$ sch that $ t_3u \in \mathcal{N}_{\lambda}^{+},$ and $ J_{\lambda}(t_3u),$ is the infimum
of $ J_{\lambda}(tu), t > 0.$
\end{itemize} 
\end{lemma}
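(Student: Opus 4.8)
The plan is to reduce the whole statement to the elementary calculus of the fibering map $\phi_u$ together with one auxiliary function that absorbs the singularity. Since for $t>0$ one has $tu\in\mathcal N_\lambda$ if and only if $\phi_u'(t)=0$, I first multiply \eqref{ph1} by $t^{\alpha}$ and set
\[
m_u(t)=t^{\alpha}\phi_u'(t)+C=A\,t^{p-1+\alpha}+B\,t^{p\theta-1+\alpha}-\lambda q\,D\,t^{q-1+\alpha},
\]
where $A=a\Vert u\Vert^{p}+\Vert u\Vert_{p}^{p}>0$, $B=b\Vert u\Vert^{p\theta}>0$, $C=\int c(x)(u^{+})^{1-\alpha}\ge 0$ and $D=\int F(x,u^{+})$. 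Then $tu\in\mathcal N_\lambda\iff m_u(t)=C$, and differentiating the identity $m_u(t)-C=t^{\alpha}\phi_u'(t)$ gives $m_u'(t)=t^{\alpha}\phi_u''(t)$ at every such root, so $\operatorname{sgn}m_u'(t)=\operatorname{sgn}\phi_u''(t)$ there; this lets me read off membership in $\mathcal N_\lambda^{+}$ (where $m_u'>0$) or $\mathcal N_\lambda^{-}$ (where $m_u'<0$) purely from the monotonicity of $m_u$. Moreover $\operatorname{sgn}\phi_u'(t)=\operatorname{sgn}(m_u(t)-C)$ for $t>0$, which will yield the minimum/maximum assertions for free.

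For the second item, take $u\in X^{-}$, so $D\le 0$ and $-\lambda qD\,t^{q-1+\alpha}\ge 0$. Then $m_u$ is a sum of strictly increasing powers with $m_u(0^{+})=0$ and $m_u(+\infty)=+\infty$, so $m_u(t)=C\ge 0$ has a unique root $t_3>0$ with $m_u'(t_3)>0$, i.e.\ $t_3u\in\mathcal N_\lambda^{+}$. Since $\phi_u'(t)=t^{-\alpha}(m_u(t)-C)$ is negative on $(0,t_3)$ and positive on $(t_3,\infty)$, $\phi_u$ is strictly decreasing and then strictly increasing, whence $J_\lambda(t_3u)=\phi_u(t_3)=\inf_{t>0}J_\lambda(tu)$.

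For the first item, take $u\in X^{+}$, so $D>0$, and use $p\theta<q$ so that $p-1+\alpha<p\theta-1+\alpha<q-1+\alpha$. Writing
\[
m_u'(t)=t^{p-2+\alpha}\big[(p-1+\alpha)A+(p\theta-1+\alpha)B\,t^{p\theta-p}-\lambda q(q-1+\alpha)D\,t^{q-p}\big],
\]
the bracket is a positive constant plus an increasing power minus a strictly faster power, hence changes sign exactly once, from $+$ to $-$; thus $m_u$ increases from $0$ to a single maximum at some $t_m>0$ and then decreases to $-\infty$. Consequently, as soon as $m_u(t_m)>C$, the equation $m_u(t)=C$ has exactly two roots $0<t_1<t_m<t_2$ with $m_u'(t_1)>0$ and $m_u'(t_2)<0$, giving $t_1u\in\mathcal N_\lambda^{+}$ and $t_2u\in\mathcal N_\lambda^{-}$. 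The min/max claims follow from $\operatorname{sgn}\phi_u'=\operatorname{sgn}(m_u-C)$: $\phi_u$ decreases on $(0,t_1)$, increases on $(t_1,t_2)$ and decreases on $(t_2,\infty)$, so $J_\lambda(t_1u)=\min_{0<t\le t_1}J_\lambda(tu)$ and $J_\lambda(t_2u)=\max_{t\ge t_m}J_\lambda(tu)$.

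The crux is the threshold inequality $m_u(t_m)>C$. I would bound $m_u(t_m)$ below by dropping the nonnegative term $A\,t^{p-1+\alpha}$ and maximizing the two-term expression $B\,t^{p\theta-1+\alpha}-\lambda qD\,t^{q-1+\alpha}$ explicitly; its maximum equals a positive constant times $\lambda^{-(p\theta-1+\alpha)/(q-p\theta)}$, which tends to $+\infty$ as $\lambda\to 0^{+}$. Comparing this with the bounds $C\le\Vert c\Vert_{L^{\infty}}\vert\Omega\vert^{(p_s^{*}+\alpha-1)/p_s^{*}}S_p^{-(1-\alpha)/p}\Vert u\Vert^{1-\alpha}$ from \eqref{e2} and $D\le\gamma\vert\Omega\vert^{(p_s^{*}-q)/p_s^{*}}S_p^{-q/p}\Vert u\Vert^{q}$ from \eqref{e1}, and normalizing to $\Vert u\Vert=1$ (the general case following by the scaling $u\mapsto ru$), produces a single $\lambda_{*}>0$, independent of $u$, such that $m_u(t_m)>C$ whenever $0<\lambda<\lambda_{*}$. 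The main obstacle is precisely this quantitative estimate carried out uniformly in $u$: the singular and homogeneous constants must be controlled by the norm through \eqref{e1}--\eqref{e2}, and the growth comparison $p\theta<q$ (together, in the critical range $q=p_s^{*}$, with a smallness restriction on $b$) is what simultaneously forces the unimodal shape of $m_u$ and the blow-up of its maximum as $\lambda\to 0$; the degenerate case $C=0$ (singular term absent on $\operatorname{supp}u^{+}$) is handled separately, where only the root $t_2$ survives.
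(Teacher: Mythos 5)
Your argument is correct and takes essentially the same route as the paper: both reduce the lemma to the calculus of a single unimodal auxiliary function obtained by rescaling $\phi_u'$ and moving one term to the right-hand side (you isolate the singular term as the horizontal level $C$, the paper isolates the nonlinearity via $\psi_u(t)=\lambda q\int F(x,u^+)$), and both extract a uniform $\lambda_*$ by comparing the maximum of that function against the bounds \ref{e1}--\ref{e2}. The remaining differences (your explicit sign analysis of $\phi_u'$ for the min/max claims, and your use of the $b$-term rather than the paper's $a$-term in the lower bound for the maximum) are cosmetic.
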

\begin{proof}
first we introduce the function $ \psi_{u}$ defined as
\begin{equation}
\psi_{u}(t) = t^{p-q}(a \Vert u \Vert^p + \Vert u \Vert_{p}^{p}) + b t^{p \theta - q}\Vert u \Vert^{p \theta} - t^{1 -\alpha -q} \int c(x)(u^+)^{-\alpha+1}
\end{equation}
we allude that $ tu \in N_{\lambda}$ if and only if
\begin{equation*}
\psi_{u}(t) = \lambda q \int F(x,u^+).
\end{equation*}
and we can easly see that
\begin{equation*}
\lim_{t \to 0^+}\psi(t) = -\infty , \quad \lim_{t \to \infty} \psi(t) = 0.
\end{equation*}
Also we have
{\scriptsize
\begin{equation*}
\begin{aligned}
\psi^{\prime}(t) &= (p-q)t^{p-q-1}(a \Vert u \Vert^p + \Vert u \Vert_{p}^{p}) + b (p \theta - q) t^{p \theta - q -1}\Vert u \Vert^{p \theta} - (1 -\alpha -q)t^{ -\alpha -q} \int c(x)(u^+)^{-\alpha+1}\\
&= t^{p \theta - q -1} k(t)
\end{aligned}
\end{equation*}
}
where
{\footnotesize
\begin{equation*}
k(t) = (p-q)t^{p - p \theta}(a \Vert u \Vert^p + \Vert u \Vert_{p}^{p})+ b (p \theta - q) \Vert u \Vert^{p \theta} - (1 -\alpha -q)t^{1 -\alpha -p \theta} \int c(x)(u^+)^{-\alpha+1}
\end{equation*}
}
and since $ \alpha < 1 < p \theta < q \leq  p^*_s,$ we get $ \lim_{t \xrightarrow{>} 0}\psi^{\prime}(t) > 0 \quad \lim_{t \to \infty}\psi^{\prime}(t) < 0$
by simple calculation we obtain that $ k^{\prime}(t_{m}) =0$ for 
\begin{equation*}
t_m = \left( \frac{(p \theta +\alpha -1)(q +\alpha -1)\int c(x)\vert u\vert^{-\alpha+1}}{(p \theta - p)(q-p)(a \Vert u \Vert^p + \Vert u \Vert_{p}^{p})} \right)^{\frac{1}{p+ \alpha -1}} >0.
\end{equation*}
Moreover since $ \alpha < 1 < p \theta < q \leq  p^*_s,$ we have that
\begin{equation*}
\lim_{t \xrightarrow{>} 0}k(t) = +\infty ,\quad \lim_{t \to \infty}k(t) =  b (p \theta - q) \Vert u \Vert^{p \theta} <0.
\end{equation*}
then by the mean value theorem that there exist $ t_{max} >0$ such that $ k(t_{max}) =0,$ which means $ \psi^{\prime}(t_{max}) =0,$ hence $ t_{max}$ is a unique critical point of $ \psi_{u},$ and we know from the fibering analysis that $ \psi_{u}$ is increasing on $ (0,t_{max}),$ decreasing on $ (t_{max},\infty),$ that is to say $ t_{max}$ is a golbal maximum point of $ \psi_{u}.$ We have
\begin{equation*}
\psi_{u}(t) =\bar{\psi(t)} +  b t^{p \theta - q}\Vert u \Vert^{p \theta} 
\end{equation*}
where 
\begin{equation*}
\bar{\psi(t)} =t^{p-q}(a \Vert u \Vert^p + \Vert u \Vert_{p}^{p}) -t^{1 -\alpha -q} \int c(x)(u^+)^{-\alpha+1}
\end{equation*}
We can easly estimate $ \psi_{u}(t_{max})$ through $ \bar{\psi}_{u}(t_{max}),$ hence by \ref{e2} we get
{\footnotesize
\begin{equation}\label{e05}
\begin{aligned}
&\psi_{u}(t_{max}) \geq \max_{t >0}(\bar{\psi}(t)) =\bar{\psi}(t_m)\\
&=\left( a \Vert u \Vert^p + \Vert u \Vert_{p}^{p}\right)^{\frac{q+ \alpha-1}{p+ \alpha-1}}\left(\int c(x)(u^+)^{-\alpha+1}\right)^{\frac{p-q}{p+\alpha -1}}\left(\frac{(q-p)}{q+\alpha-1} \right)^{\frac{q+\alpha-1}{p+\alpha-1}}\left(\frac{p+\alpha -1}{q-p}\right).
\end{aligned}
\end{equation}
}
if  $u \in  X_+$ we get
{\tiny
\begin{equation}
\begin{aligned}
&\max_{t >0} \psi(t) - \lambda q \int F(x,u) \geq \\
&\left( a \Vert u \Vert^p + \Vert u \Vert_{p}^{p}\right)^{\frac{q+ \alpha-1}{p+ \alpha-1}} \left( \Vert c \Vert_{L^{\infty}}\vert \Omega\vert^{\frac{p_s^* +\alpha -1}{p_s^*}}S^{\frac{-\alpha+1}{p}} \Vert u \Vert^{-\alpha+1} \right)^{\frac{p-q}{p+\alpha -1}}\left(\frac{(q-p)}{q+\alpha-1} \right)^{\frac{q+\alpha-1}{p+\alpha-1}}\left(\frac{p+\alpha -1}{q-p}\right)\\
& - \lambda q \int F(x,u)\\ 
&\geq \left( a \Vert u \Vert^p\right)^{\frac{q+ \alpha-1}{p+ \alpha-1}} \left( \Vert c \Vert_{L^{\infty}} \vert \Omega\vert^{\frac{p_s^* +\alpha -1}{p_s^*}}S^{\frac{-\alpha+1}{p}} \Vert u \Vert^{-\alpha+1} \right)^{\frac{p-q}{p+\alpha -1}}\left(\frac{(q-p)}{q+\alpha-1} \right)^{\frac{q+\alpha-1}{p+\alpha-1}}\left(\frac{p+\alpha -1}{q-p}\right)\\
& - \lambda q \gamma \vert \Omega \vert^{\frac{p_s^*-q}{p_s^*}}S^{-\frac{q}{p}} \Vert u \Vert^{q}\\ 
&\geq\Vert u \Vert^q\left( \Vert c \Vert_{L^{\infty}}\vert \Omega\vert^{\frac{p_s^* +\alpha -1}{p_s^*}}S^{\frac{-\alpha+1}{p}}\right)^{\frac{p-q}{p+\alpha -1}}\left(a\frac{(q-p)}{q+\alpha-1} \right)^{\frac{q+\alpha-1}{p+\alpha-1}}\left(\frac{p+\alpha -1}{q-p}\right)\\
& - \lambda q \gamma \vert \Omega \vert^{\frac{p_s^*-q}{p_s^*}}S^{-\frac{q}{p}} \Vert u \Vert^{q}\\ 
&\geq \left(  \vert \Omega \vert^{-\frac{(p_s^*-p)(q +\alpha -1)}{p_s^*(p +\alpha -1)}}S^{\frac{q}{p}}\left( \Vert c \Vert_{L^{\infty}}S^{\frac{-\alpha+1}{p}}\right)^{\frac{p-q}{p+\alpha -1}}\left(a\frac{(q-p)}{q+\alpha-1} \right)^{\frac{q+\alpha-1}{p+\alpha-1}}\left(\frac{p+\alpha -1}{\gamma q (q-p)}\right) - \lambda \right)
 \Vert u \Vert^{q}\\ 
\end{aligned}
\end{equation}
}
if we choose $ \lambda < \lambda_*$ where
\begin{equation*}
\lambda_* :=  \vert \Omega \vert^{-\frac{(p_s^*-p)(q +\alpha -1)}{p_s^*(p +\alpha -1)}}S^{\frac{q}{p}}\left( \Vert c \Vert_{L^{\infty}}S^{\frac{-\alpha+1}{p}}\right)^{\frac{p-q}{p+\alpha -1}}\left(a\frac{q-p}{q+\alpha-1} \right)^{\frac{q+\alpha-1}{p+\alpha-1}}\left(\frac{p+\alpha -1}{\gamma q (q-p)}\right)
\end{equation*}
then there exist $ t_1 < t_m < t_2 $ such that $ \psi(t_1) = \lambda q \int F(x,u) dx = \psi(t_2).$ And we we can see that if $ u \in \mathcal{N}_{\lambda}$ we have $ \phi^{\prime\prime}_{u}(t) =t^{q -1}\psi^{\prime}_{u}(t),$ which yields $ \psi^{\prime}_{u}(t_1) >0$ and $  \psi^{\prime}_{u}(t_2) <0,$ which means that 
$ t_1u \in \mathcal{N}_{\lambda}^{+}$ and $ t_2u \in \mathcal{N}_{\lambda}^{-}.$ And from the fact that $ \phi^{\prime \prime}_{u}(t_1) > 0,$ and $ \phi^{\prime}_{u}(t_1) = 0,$ and $ \phi^{\prime \prime}_{u}(t_2) < 0,$ and $ \phi^{\prime}_{u}(t_2) = 0,$
tells us that $ \phi_{u},$ has a local minimum at $ t_1$ and local maximum at $ t_2.$ That is 
\begin{equation*}
J_{\lambda}(t_1u) =\min_{t \leq t_1}J_{\lambda}(tu), \quad \textit{and} \quad J_{\lambda}(t_2u) =\max_{t \geq t_{max}}J_{\lambda}(tu)
\end{equation*}
if  $u \in  X_-$ have that 
\begin{equation*}
lim_{t \xrightarrow{>} 0}\psi(t) = -\infty \quad  \texttt{and} \quad lim_{t \to \infty}\psi(t) =0.
\end{equation*}
then for all $ \lambda > 0$ we have that
\begin{equation*}
 lim_{t \xrightarrow{>} 0}\psi(t) < \lambda q \int F(x,u) dx \leq  lim_{t \to \infty}\psi(t)
\end{equation*}
hence by the mean value theorem for all $ \lambda > 0,$ there exist $ t_3 \in (0,\infty)$ such that 
\begin{equation*}
\psi(t_3) = \lambda q \int F(x,u) dx \quad \textit{and} \quad  lim_{t \xrightarrow{>} 0}\psi(t) <\psi(t_3)
\end{equation*}
which means that $ \psi$ increasing on $(0, t_3)$ and $ \psi^{\prime}(t_3) > 0.$ hence by $ t^{q-1}\psi^{\prime}(t_3) = \phi^{\prime\prime}(t_3),$ we get hat 
$ t_3 u \in \mathcal{N}_{\lambda}^{+}.$
\end{proof}
\begin{lemma}\label{l2}
there exist $ \lambda_{**} > 0,$ such that for all $ \lambda < \lambda_{**}$ we have $ \mathcal{N}_{\lambda}^{0} = \{0\},$
\end{lemma}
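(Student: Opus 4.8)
The plan is to argue by contradiction: assume some $u \in \mathcal{N}_{\lambda}^{0}$ with $u \neq 0$ exists, and show this is impossible once $\lambda$ is small. First I would record the two defining identities. Since $u \in \mathcal{N}_{\lambda}$ means $\phi_{u}^{\prime}(1)=0$ and membership in $\mathcal{N}_{\lambda}^{0}$ additionally requires $\phi_{u}^{\prime \prime}(1)=0$, evaluating \ref{ph1} and \ref{ph2} at $t=1$ gives, with the abbreviations $A = a\Vert u\Vert^{p} + \Vert u\Vert_{p}^{p}$, $B = b\Vert u\Vert^{p\theta}$, $C = \int c(x)(u^+)^{1-\alpha}$ and $G = \int F(x,u^+)$, the pair
\begin{equation*}
A + B - C - \lambda q G = 0, \qquad (p-1)A + (p\theta-1)B + \alpha C - \lambda q(q-1)G = 0 .
\end{equation*}
Observe that if $u^+ \equiv 0$ then $C = G = 0$, and the first identity forces $\Vert u\Vert = 0$; hence any nontrivial candidate has $u^+ \not\equiv 0$, so in particular $C > 0$.

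Next I would eliminate the two nonlinear quantities in turn to produce two competing size estimates for $\Vert u\Vert$. Using the first identity to substitute $\lambda q G = A+B-C$ into the second yields the relation $(q+\alpha-1)C = (q-p)A + (q-p\theta)B$; since $1 < p\theta < q$ and $0<\alpha<1$ all three coefficients are positive, so $(q+\alpha-1)C \ge (q-p)A \ge (q-p)a\Vert u\Vert^{p}$. Combining this with the Hölder--Sobolev bound \ref{e2} for $C$ (whose exponent is $1-\alpha < p$) produces a $\lambda$-independent \emph{upper} bound $\Vert u\Vert \le R_0$ for an explicit constant $R_0$. Symmetrically, using the first identity to substitute $C = A+B-\lambda q G$ into the second yields $(p+\alpha-1)A + (p\theta+\alpha-1)B = \lambda q(q+\alpha-1)G$, whence $(p+\alpha-1)a\Vert u\Vert^{p} \le \lambda q(q+\alpha-1)G$; combining this with \ref{e1} for $G$ (whose exponent is $q > p$) produces a \emph{lower} bound of the form $\Vert u\Vert^{q-p} \ge c_*/\lambda$ with $c_*$ an explicit positive constant.

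Finally I would choose $\lambda_{**}$ to be exactly the threshold at which $\bigl(c_*/\lambda\bigr)^{1/(q-p)}$ equals $R_0$; for every $\lambda < \lambda_{**}$ the forced lower bound strictly exceeds the forced upper bound, which is the desired contradiction, so $\mathcal{N}_{\lambda}^{0} = \{0\}$. Since one bound is independent of $\lambda$ while the other blows up as $\lambda \to 0$, this $\lambda_{**}$ is well defined and positive. The one genuinely delicate point is the bookkeeping of signs: the whole argument hinges on the coefficients $q-p$, $q-p\theta$, $p+\alpha-1$ and $q+\alpha-1$ all being positive, which is precisely where the hypotheses $0<\alpha<1$ and $1<p\theta<q\le p_s^*$ enter, and on the fact that discarding the nonnegative terms $\Vert u\Vert_{p}^{p}$ and $B$ leaves the clean power $a\Vert u\Vert^{p}$ against which the two Sobolev estimates bite in opposite directions.
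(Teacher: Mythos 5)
Your proposal is correct and follows essentially the same strategy as the paper: from $\phi_u'(1)=\phi_u''(1)=0$ one eliminates the $F$-term to get a $\lambda$-independent upper bound on $\Vert u\Vert$ (via the sublinear singular term) and eliminates the $c$-term to get a lower bound of order $\lambda^{-1/(q-p)}$ (via the $q$-homogeneous term), which clash for $\lambda$ small. The only cosmetic difference is that the paper retains both Kirchhoff contributions by applying the arithmetic--geometric mean inequality to $a\Vert u\Vert^{p}$ and $b\Vert u\Vert^{p\theta}$ (producing the exponent $\tfrac{p+p\theta}{2}$ and the $\sqrt{ab}$ in its explicit $\lambda_{**}$), whereas you simply discard the $b$-term and work with $a\Vert u\Vert^{p}$ alone, which is equally valid and slightly more elementary.
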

\begin{proof}
Suppose otherwise, let $ u \in \mathcal{N}_{\lambda}^{0},$ such that $ u \neq 0,$ then by \ref{ph1} and \ref{ph2} we have 
\begin{equation} \label{el1}
\phi^{\prime}(1) = \left( a \Vert u \Vert^p + \Vert u \Vert_{p}^{p}\right) + b \Vert u \Vert^{p \theta}  - \int c(x)(u^+)^{-\alpha+1} - \lambda q \int F(x,u^+) = 0 
\end{equation}
\begin{equation} \label{el2}
\begin{aligned}
&\phi^{\prime \prime}(1) = (p-1) \left(a\Vert u \Vert^p + \Vert u \Vert_{p}^{p}\right) + b(p \theta - 1) \Vert u \Vert^{p \theta} + \alpha \int c(x)(u^+)^{-\alpha+1}\\ 
&- \lambda q (q-1) \int F(x,u^+) = 0.
\end{aligned}
\end{equation}
from \ref{el1}, \ref{el2} and using the geometric mean inequality we get
\begin{equation} \label{el3}
\Vert u \Vert \leq  \left( \frac{(q + \alpha -1) \Vert c \Vert _{L^{q^{\prime}}} \vert \Omega \vert^{\frac{p_s^*+ \alpha -1}{p_s^*}} S^{-\frac{-\alpha+1}{p}}}{2 \left( a b (q-p)(q -p\theta) \right)^{\frac{1}{2}}} \right)^{\frac{2}{p \theta + p +2\alpha -2 }}
\end{equation}
Using \ref{el1} and \ref{el3} we get
\footnotesize
\begin{equation}
\begin{aligned}
\phi^{\prime \prime}(1) &= (p-1) \left(a \Vert u \Vert^p + \Vert u \Vert_{p}^{p}\right) + b(p \theta - 1) \Vert u \Vert^{p \theta} + \alpha \left( \left( a \Vert u \Vert^p + \Vert u \Vert_{p}^{p}\right) + b \Vert u \Vert^{p \theta} - \lambda q \int F(x,u^+) \right)\\
&- \lambda q (q-1) \int F(x,u^+)\\
&= (p+ \alpha -1) \left(a \Vert u \Vert^p + \Vert u \Vert_{p}^{p}\right) + b(p \theta + \alpha - 1) \Vert u \Vert^{p \theta} -\lambda q (q + \alpha -1   ) \int F(x,u^+)\\
& \geq a (p+ \alpha -1) \Vert u \Vert^p + b(p \theta + \alpha - 1) \Vert u \Vert^{p \theta} -\lambda q (q + \alpha -1   )  \gamma \vert \Omega \vert^{\frac{p_s^*-q}{p_s^*}}S^{-\frac{q}{p}} \Vert u \Vert^{q}\\
& \geq  2\left( a b (p+ \alpha -1)(p \theta + \alpha - 1)\right)^{\frac{1}{2}}\Vert u \Vert^{\frac{p+p \theta}{2}}-\lambda q (q + \alpha -1   )  \gamma \vert \Omega \vert^{\frac{p_s^*-q}{p_s^*}}S^{-\frac{q}{p}} \Vert u \Vert^{q}\\
& \geq \left( 2\left( a b (p+ \alpha -1)(p \theta + \alpha - 1)\right)^{\frac{1}{2}} -\lambda q (q + \alpha -1   )  \gamma \vert \Omega \vert^{\frac{p_s^*-q}{p_s^*}}S^{-\frac{q}{p}}\Vert u \Vert^{\frac{2q - p \theta - p}{2}} \right) \Vert u \Vert^{\frac{p+p \theta}{2}}\\
& \geq \left( \frac{2\left(a b (p+ \alpha -1)(p \theta + \alpha - 1)\right)^{\frac{1}{2}}}{q (q + \alpha -1   )  \gamma \vert \Omega \vert^{\frac{p_s^*-q}{p_s^*}}S^{-\frac{q}{p}}}  -\lambda \Vert u \Vert^{\frac{2q - p \theta - p}{2}} \right) \Vert u \Vert^{\frac{p+p \theta}{2}}\\
& \geq \left(\lambda_{**} - \lambda \right)\Vert u \Vert^{\frac{p+p \theta}{2}} > 0.\\
\end{aligned}
\end{equation}
Where 
{\scriptsize
\begin{equation}
\lambda_{**} = \left(\frac{4 ab}{(q + \alpha -1)^2}\right)^{\frac{q +\alpha -1}{p\theta +p +2\alpha -2}}\frac{\left((p+ \alpha -1)(p \theta + \alpha - 1)\right)^{\frac{1}{2}}S^{\frac{(\theta +1)(q +\alpha -1)}{p\theta +p +2\alpha -2}}
\vert \Omega \vert^{-\frac{(q+ \alpha -1)(2p^*_s -p\theta -p)}{p^*_s (p\theta +p +2\alpha -2)}}}{\left((q-p)(q -p\theta)\right)^{-\frac{1}{2}\frac{2q - p \theta - p}{p \theta + p +2\alpha -2 }} q\gamma \Vert c \Vert _{L^{q^{\prime}}}^{\frac{2q - p \theta - p}{p \theta + p +2\alpha -2 }}}
\end{equation}
}
Since $ p\theta < q \leq p^*_s,$ then if we choose $ \lambda < \lambda_{**},$ we obtain that $ \phi^{\prime \prime}(1) > 0,$ which is our contradiction,
\end{proof}
We use the following lemma to show that $ \mathcal{N}^{-}_{\lambda},$  is closed set in the topology of the space $ X_0,$ 
\begin{lemma} \label{l3}
There exist a gap structure in $ \mathcal{N}_{\lambda},$ such that for $ \lambda < \lambda_{**},$ we have
\begin{equation*}
\Vert u \Vert < \eta_{0} < \eta_{\lambda} < \Vert v \Vert, \quad \textit{for any} \quad u \in \mathcal{N}_{\lambda}^{+},\,\, v \in \mathcal{N}_{\lambda}^{-}.
\end{equation*}
Where
{\tiny
\begin{equation*}
\eta_0 = \left( \frac{(q +\alpha -1) \Vert c\Vert_{q^{\prime}} S^{-\frac{-\alpha +1}{p}} \vert \Omega\vert^{\frac{p^*_s  +\alpha -1}{p^*_s}}}{2
\left(ab(q-p)(q-p\theta) \right)^{\frac{1}{2}}} \right)^{\frac{2}{p\theta +p +2\alpha -2}}.  \eta_{\lambda} = \left( \frac{2 \left(ab(p +\alpha -1)(p\theta +\alpha -1)\right)^{\frac{1}{2}}}{\lambda q(q+ \alpha -1) S^{-\frac{q}{p}} \vert \Omega\vert^{\frac{p_s^*-q}{p_s^*}} } \right)^{\frac{2}{2q -\theta p -p}}.
\end{equation*}
}
\end{lemma}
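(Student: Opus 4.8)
The plan is to establish the two bounds $\Vert u\Vert < \eta_0$ for $u \in \mathcal{N}_\lambda^+$ and $\Vert v\Vert > \eta_\lambda$ for $v \in \mathcal{N}_\lambda^-$ separately, in each case exploiting the Nehari constraint $\phi_u'(1)=0$ (equation \ref{el1}) to eliminate one of the two nonlinear integrals from the second-derivative expression $\phi_u''(1)$, and then applying the geometric mean inequality to the two Kirchhoff power terms.

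First, for $u \in \mathcal{N}_\lambda^+$ I would substitute $\lambda q\int F(x,u^+)$ out of $\phi_u''(1)$ using \ref{el1}, which gives
\[
\phi_u''(1) = (p-q)(a\Vert u\Vert^p + \Vert u\Vert_p^p) + b(p\theta-q)\Vert u\Vert^{p\theta} + (q+\alpha-1)\int c(x)(u^+)^{-\alpha+1}.
\]
Since $p\theta < q$, both power coefficients are negative, so the condition $\phi_u''(1) > 0$ rearranges into an upper bound for the Kirchhoff terms in terms of the singular term. Discarding the nonnegative summand $\Vert u\Vert_p^p$, bounding $\int c(x)(u^+)^{-\alpha+1}$ by \ref{e2}, and applying $X + Y \geq 2\sqrt{XY}$ to $X = (q-p)a\Vert u\Vert^p$ and $Y = b(q-p\theta)\Vert u\Vert^{p\theta}$ yields
\[
2\sqrt{ab(q-p)(q-p\theta)}\,\Vert u\Vert^{\frac{p+p\theta}{2}} < (q+\alpha-1)\Vert c\Vert_{L^\infty}\,\vert\Omega\vert^{\frac{p_s^*+\alpha-1}{p_s^*}}S^{-\frac{-\alpha+1}{p}}\Vert u\Vert^{-\alpha+1}.
\]
Dividing by $\Vert u\Vert^{-\alpha+1}$ collapses the two powers into the single exponent $\tfrac{p\theta+p+2\alpha-2}{2}$ and produces exactly $\Vert u\Vert < \eta_0$.

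For $v \in \mathcal{N}_\lambda^-$ I would instead eliminate the singular integral: substituting \ref{el1} for $\int c(x)(v^+)^{-\alpha+1}$ reproduces the expression already obtained inside Lemma \ref{l2},
\[
\phi_v''(1) = (p+\alpha-1)(a\Vert v\Vert^p + \Vert v\Vert_p^p) + b(p\theta+\alpha-1)\Vert v\Vert^{p\theta} - \lambda q(q+\alpha-1)\int F(x,v^+).
\]
Here $\phi_v''(1) < 0$ bounds the positive Kirchhoff terms above by the $F$-term; estimating $\int F(x,v^+)$ via \ref{e1}, discarding $\Vert v\Vert_p^p$, and applying the geometric mean inequality to $a(p+\alpha-1)\Vert v\Vert^p$ and $b(p\theta+\alpha-1)\Vert v\Vert^{p\theta}$ gives
\[
2\sqrt{ab(p+\alpha-1)(p\theta+\alpha-1)}\,\Vert v\Vert^{\frac{p+p\theta}{2}} < \lambda q(q+\alpha-1)\,\vert\Omega\vert^{\frac{p_s^*-q}{p_s^*}}S^{-\frac{q}{p}}\Vert v\Vert^q.
\]
Dividing by $\Vert v\Vert^{\frac{p+p\theta}{2}}$ and solving for the surviving power $\tfrac{2q-p\theta-p}{2}$ delivers $\Vert v\Vert > \eta_\lambda$.

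It remains to close the gap by checking $\eta_0 < \eta_\lambda$. Because $\eta_0$ does not depend on $\lambda$ while $\eta_\lambda$ is strictly decreasing in $\lambda$ (the parameter sits in its denominator), the inequality $\eta_0 < \eta_\lambda$ holds for every $\lambda$ below the crossover value at which the two quantities coincide; equating the closed forms of $\eta_0$ and $\eta_\lambda$ shows that this threshold is precisely the $\lambda_{**}$ of Lemma \ref{l2}. The main obstacle I expect is this final bookkeeping: reconciling the fractional exponents $\tfrac{2}{p\theta+p+2\alpha-2}$ and $\tfrac{2}{2q-\theta p-p}$ attached to $\eta_0$ and $\eta_\lambda$ with the intricate exponents appearing in the closed form of $\lambda_{**}$, together with the minor nuisance of matching the $\Vert c\Vert_{L^\infty}$ estimate used for $\eta_0$ against the $\Vert c\Vert_{L^{q'}}$ written in the statement.
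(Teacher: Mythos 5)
Your proposal is correct and follows essentially the same route as the paper: the two equivalent expressions for $\phi''(1)$ obtained from the Nehari constraint (the paper's \ref{e10} and \ref{e11}), the sign conditions on $\mathcal{N}_{\lambda}^{\pm}$ combined with the H\"older bounds \ref{e1}, \ref{e2} and the geometric mean inequality, and finally the identification of $\lambda_{**}$ as the crossover value where $\eta_0=\eta_\lambda$. The ``bookkeeping'' discrepancies you flag (the factor $\gamma$ and $\Vert c\Vert_{L^\infty}$ versus $\Vert c\Vert_{L^{q'}}$) are inconsistencies already present in the paper's own statement of $\eta_\lambda$ and $\lambda_{**}$, not defects of your argument.
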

\begin{proof}
If $ u \in \mathcal{N}_{\lambda},$ then by \ref{el1} we get
\begin{equation} \label{e10}
\phi^{\prime \prime}(1) = (p +\alpha -1) \left(a\Vert u \Vert^p + \Vert u \Vert_{p}^{p}\right) + b(p \theta +\alpha -1)\Vert u \Vert^{p \theta} - \lambda q (q +\alpha -1) \int F(x,u^+) .
\end{equation}
\begin{equation} \label{e11}
\phi^{\prime \prime}(1) = (p-q) \left(a\Vert u \Vert^p + \Vert u \Vert_{p}^{p}\right) + b(p \theta - q) \Vert u \Vert^{p \theta} + (q +\alpha -1)\int c(x)( u^+)^{-\alpha+1}.
\end{equation}
Let $ u \in \mathcal{N}_{\lambda}^{+},$ then Using \ref{e11} and the geometric mean inequality we get
\begin{equation*}
\Vert u \Vert < \left( \frac{(q +\alpha -1) \Vert c\Vert_{q^{\prime}} S^{-\frac{-\alpha +1}{p}} \vert \Omega\vert^{\frac{p^*_s  +\alpha -1}{p^*_s}}}{2
\left(ab(q-p)(q-p\theta) \right)^{\frac{1}{2}}} \right)^{\frac{2}{p\theta +p +2\alpha -2}} = \eta_0.
\end{equation*}
If $ v \in \mathcal{N}_{\lambda}^{-},$ then Using \ref{e10} and the geometric mean inequality we find
\begin{equation*}
\Vert v \Vert > \left( \frac{2 \left(ab(p +\alpha -1)(p\theta +\alpha -1)\right)^{\frac{1}{2}}}{\lambda q(q+ \alpha -1) S^{-\frac{q}{p}} \vert \Omega\vert^{\frac{p_s^*-q}{p_s^*}} } \right)^{\frac{2}{2q -\theta p -p}} = \eta_{\lambda}.
\end{equation*}
Now, we show that $ \eta_{0} < \eta_{\lambda},$ for $ \lambda < \lambda_{**},$ by simple calculation we get 
\begin{equation}
\eta_0 - \eta_{\lambda} =\left(\frac{\left( (q +\alpha -1) \Vert u\Vert_{q^{\prime}} S_{p}^{\frac{\alpha -1}{p}} \vert \Omega\vert^{\frac{p^*_s +\alpha -1}{p}} \right)^{2}}{(4 ab) (q-p)(q-p\theta)} \right)^{\frac{1}{p\theta +p +2\alpha -2}}\left( 1 -\gamma^{\frac{2}{2q -p\theta -p}} \right).
\end{equation} 
Hence and \ref{e1} our result follows.
\end{proof}
\begin{corollary}\label{cl3}
Now, let $ (u_n)_n,$ be a sequence in $ \mathcal{N}_{\lambda}^{-},$ where $ u_n \longrightarrow u$ in $ X_0.$ then $ u \in \overline{\mathcal{N}_{\lambda}^{-}},$ 
and by lemma \ref{l3}, we have $ \Vert u\Vert = \lim_{n \to \infty} \Vert u_n\Vert \geq \eta_{\lambda} > \eta_{0} >0.$ which means that $ u \neq 0.$ Hence 
$ \overline{\mathcal{N}_{\lambda}^{-}} = \mathcal{N}_{\lambda}^{-}.$
\end{corollary}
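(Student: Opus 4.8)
The plan is to establish that $\mathcal{N}_{\lambda}^{-}$ is sequentially closed in $X_0$; since $X_0$ is a metric space this is equivalent to being closed, which is exactly the assertion $\overline{\mathcal{N}_{\lambda}^{-}} = \mathcal{N}_{\lambda}^{-}$. First I would fix $\lambda < \lambda_{**}$, take an arbitrary sequence $(u_n) \subset \mathcal{N}_{\lambda}^{-}$ with $u_n \to u$ in $X_0$, and show the limit $u$ again lies in $\mathcal{N}_{\lambda}^{-}$. The argument breaks into three ingredients: (i) the limit is nontrivial, (ii) the limit lies on the Nehari manifold $\mathcal{N}_{\lambda}$, and (iii) the strict second-order condition $\varphi_u''(1) < 0$ survives the passage to the limit.

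For (i), I would invoke the gap structure of Lemma \ref{l3}. Each $u_n \in \mathcal{N}_{\lambda}^{-}$ satisfies $\Vert u_n \Vert > \eta_{\lambda}$, so by continuity of the norm $\Vert u \Vert = \lim_{n} \Vert u_n \Vert \geq \eta_{\lambda} > \eta_0 > 0$, whence $u \neq 0$. This is the step already recorded in the statement. For (ii) and (iii), the preliminary observation I would establish is that the maps $u \mapsto \varphi_u'(1)$ and $u \mapsto \varphi_u''(1)$ are continuous on $X_0$. Inspecting \ref{ph1} and \ref{ph2}, every term is a continuous functional of $u$: the norm terms $\Vert u \Vert^p$, $\Vert u \Vert_p^p$, $\Vert u \Vert^{p\theta}$ are plainly continuous; the term $\int c(x)(u^+)^{-\alpha+1}$ is continuous because the exponent $1-\alpha \in (0,1)$ is genuinely positive, so using the concavity bound $|a^{1-\alpha}-b^{1-\alpha}| \leq |a-b|^{1-\alpha}$ together with the estimate \ref{e2} and the compact embedding of $X_0$ into $L^{1}(\Omega)$ controls it; and $\int F(x,u^+)$ is continuous by the growth bound \ref{e1}. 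Passing to the limit in $\varphi_{u_n}'(1)=0$ then gives $\varphi_u'(1)=0$, i.e. $u \in \mathcal{N}_{\lambda}$, while passing to the limit in $\varphi_{u_n}''(1) < 0$ yields only the non-strict inequality $\varphi_u''(1) \leq 0$.

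The main obstacle is precisely this loss of strictness: a priori the limit could fall into $\mathcal{N}_{\lambda}^{0}$ instead of $\mathcal{N}_{\lambda}^{-}$, and continuity alone cannot exclude the borderline case $\varphi_u''(1)=0$. Here I would close the argument with Lemma \ref{l2}: for $\lambda < \lambda_{**}$ one has $\mathcal{N}_{\lambda}^{0} = \{0\}$, and since step (i) guarantees $u \neq 0$, the limit cannot belong to $\mathcal{N}_{\lambda}^{0}$. Combining $u \in \mathcal{N}_{\lambda}$, $\varphi_u''(1) \leq 0$, and $u \notin \mathcal{N}_{\lambda}^{0}$ forces $\varphi_u''(1) < 0$, so $u \in \mathcal{N}_{\lambda}^{-}$.

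Since the limit of every convergent sequence drawn from $\mathcal{N}_{\lambda}^{-}$ remains in $\mathcal{N}_{\lambda}^{-}$, the set is closed in $X_0$ and therefore $\overline{\mathcal{N}_{\lambda}^{-}} = \mathcal{N}_{\lambda}^{-}$, as claimed. I expect the only delicate point to be the continuity of the term $\int c(x)(u^+)^{-\alpha+1}$; everything else is a direct combination of the two preceding lemmas, with Lemma \ref{l3} supplying nontriviality and Lemma \ref{l2} upgrading $\varphi_u''(1) \leq 0$ to the strict inequality.
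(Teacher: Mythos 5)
Your proof is correct and follows essentially the same route as the paper: the gap structure of Lemma \ref{l3} forces the limit to satisfy $\Vert u\Vert \geq \eta_{\lambda} > \eta_0 > 0$, hence $u \neq 0$, and Lemma \ref{l2} then excludes the degenerate case $\varphi_u''(1)=0$. Your write-up is in fact more complete than the paper's one-line corollary, which records only the nontriviality step and leaves implicit both the continuity of $u \mapsto \varphi_u'(1)$, $u \mapsto \varphi_u''(1)$ (needed to place the limit in $\mathcal{N}_{\lambda}^{-}\cup\mathcal{N}_{\lambda}^{0}$) and the appeal to $\mathcal{N}_{\lambda}^{0}=\{0\}$.
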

\begin{lemma}\label{l4}
Let $ u \in \mathcal{N}_{\lambda}^{-},$ and $ \lambda > 0.$ Then there exist $ \epsilon > 0$ and a continuous function 
$ \zeta : B_{\epsilon}(0) \longrightarrow \mathbb{R}^{+}$ such that
\begin{equation*}
\zeta(\phi) > 0, \zeta(0) = 1, \,\,\, \zeta(\phi)(u +\phi)\,\,\, in\,\,\, \mathcal{N}_{\lambda}^{-} \,\,\, \textit{for all} \,\,\, \phi \,\,\, \in \,\,\, B_{\epsilon}(0).
\end{equation*}
Where  $ B_{\epsilon}$ is the open ball in $ X_0$ centered at $ 0$ and of radius $ \epsilon.$
\end{lemma}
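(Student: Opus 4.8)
The plan is to rewrite the constraint ``$t(u+\phi)\in\mathcal{N}_{\lambda}$'' as the vanishing of a single scalar function of $(t,\phi)$ and to solve it for $t=\zeta(\phi)$ by the implicit function theorem at $(t,\phi)=(1,0)$, the nondegeneracy being supplied exactly by the strict sign $u\in\mathcal{N}_{\lambda}^{-}$.

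First I would fix $u\in\mathcal{N}_{\lambda}^{-}$ and, for $\phi$ in a neighbourhood of $0$ in $X_0$, introduce the fibre map $h_{\phi}(\tau):=J_{\lambda}(\tau(u+\phi))$ together with the scalar function
$$
\Xi(t,\phi):=h_{\phi}'(t),
$$
i.e. the right-hand side of (\ref{ph1}) with $u$ replaced by $u+\phi$, defined for $t>0$. Since $h_{\phi}'$ is exactly (\ref{ph1}), the chain rule gives, for every scalar $\zeta>0$, the fibre map of $\zeta(u+\phi)$
$$
\frac{d}{ds}\Big|_{s=1}J_{\lambda}\bigl(s\,\zeta(u+\phi)\bigr)=\zeta\,\Xi(\zeta,\phi),\qquad
\frac{d^{2}}{ds^{2}}\Big|_{s=1}J_{\lambda}\bigl(s\,\zeta(u+\phi)\bigr)=\zeta^{2}\,\partial_{t}\Xi(\zeta,\phi).
$$
Thus $\zeta(u+\phi)\in\mathcal{N}_{\lambda}$ if and only if $\Xi(\zeta,\phi)=0$, and at such a zero the sign of $\partial_{t}\Xi(\zeta,\phi)$ decides whether $\zeta(u+\phi)$ lies in $\mathcal{N}_{\lambda}^{+}$ or $\mathcal{N}_{\lambda}^{-}$.

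Next I would evaluate at the base point. As $u\in\mathcal{N}_{\lambda}^{-}\subset\mathcal{N}_{\lambda}$ we have $\Xi(1,0)=h_{0}'(1)=\phi_{u}'(1)=0$, while $u\in\mathcal{N}_{\lambda}^{-}$ gives $\partial_{t}\Xi(1,0)=\phi_{u}''(1)<0$ by (\ref{ph2}). Hence the implicit function theorem provides $\epsilon>0$ and a continuous map $\zeta:B_{\epsilon}(0)\to\mathbb{R}$ with $\zeta(0)=1$ and $\Xi(\zeta(\phi),\phi)=0$ on $B_{\epsilon}(0)$, that is $\zeta(\phi)(u+\phi)\in\mathcal{N}_{\lambda}$. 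Shrinking $\epsilon$, the continuity of $\zeta$ together with $\zeta(0)=1$ yields $\zeta(\phi)>0$, and the continuity of $\partial_{t}\Xi$ together with $\partial_{t}\Xi(1,0)<0$ yields $\partial_{t}\Xi(\zeta(\phi),\phi)<0$; by the second identity above this means the fibre map of $\zeta(\phi)(u+\phi)$ has strictly negative second derivative at $1$, i.e. $\zeta(\phi)(u+\phi)\in\mathcal{N}_{\lambda}^{-}$, which is the assertion.

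The main obstacle is the regularity of $\Xi$ in the variable $\phi$: the singular term $\int c(x)((u+\phi)^{+})^{-\alpha+1}$ is precisely the reason $J_{\lambda}$ is merely continuous rather than $C^{1}$, so joint smoothness of $\Xi$ cannot be taken for granted. To run the argument it suffices that $\Xi$ be continuous, that $\partial_{t}\Xi$ exist and be continuous, and that $\partial_{t}\Xi(1,0)\neq0$; the dependence on $t$ is smooth for $t>0$, so the whole difficulty is the joint continuity in $\phi$ of the singular integral near $\phi=0$. Here I would exploit that $u\in\mathcal{N}_{\lambda}^{-}$ forces $\Vert u\Vert\geq\eta_{\lambda}>\eta_{0}>0$ by Lemma \ref{l3} and Corollary \ref{cl3}, so $u\neq0$ and $\int c(x)(u^{+})^{-\alpha+1}$ is finite, and combine the bound (\ref{e2}) with dominated convergence to control $\phi\mapsto\int c(x)((u+\phi)^{+})^{-\alpha+1}$ on a small ball. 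Should joint $C^{1}$ regularity be unavailable, the implicit function theorem can be bypassed: since $\partial_{t}\Xi(1,0)<0$, strict monotonicity of $t\mapsto\Xi(t,\phi)$ near $t=1$ persists for small $\phi$, giving a unique zero $\zeta(\phi)$ whose continuity follows directly from that of $\Xi$.
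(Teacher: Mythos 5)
Your proposal is correct and follows essentially the same route as the paper: the paper applies the implicit function theorem to $H(\phi,s)=s^{\alpha}\,\phi_{u+\phi}'(s)$ at the point $(0,1)$, using $H(0,1)=0$ and $\partial_s H(0,1)<0$ (the $\mathcal{N}_{\lambda}^{-}$ condition) and then shrinking the ball to preserve the strict sign, which is exactly your argument up to the harmless factor $s^{\alpha}$ that merely makes the singular term independent of $s$. Your additional remarks on the continuity in $\phi$ of the singular integral and the monotonicity fallback address a regularity point the paper leaves implicit, but they do not change the method.
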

\begin{proof}
Let $ H : X_0 \times \mathbb{R}^{+} \longrightarrow \mathbb{R}$ defined as
{\footnotesize
\begin{equation*}
\begin{aligned}
& H(\phi, s) =s^{p +\alpha -1} \left( a\Vert u +\phi\Vert^{p} +\Vert u +\phi\Vert^{p}_{p}\right) +b s^{p\theta +\alpha -1}\Vert u +\phi\Vert^{p\theta} -\int_{\Omega}c(x)((u +\phi)^+)^{1 -\alpha}\\
&- \lambda q s^{q +\alpha -1} \int_{\Omega} F(x,u +\phi)^+)
\end{aligned}
\end{equation*}
} 
Since $ u \in \mathcal{N}_{\lambda}^{-}$ and \ref{e10} we get
\begin{equation}\label{e13}
H(0, 1) = \left( a\Vert u\Vert^{p} +\Vert u\Vert^{p}_{p}\right) +b \Vert u\Vert^{p\theta}-\int_{\Omega}c(x)(u^+)^{1 -\alpha} - \lambda q\int_{\Omega} F(x,u^+) =0.
\end{equation}
{\footnotesize
\begin{equation} \label{e14}
\frac{\partial H}{\partial s}(0, 1) =(p +\alpha -1)\left( a\Vert u\Vert^{p} +\Vert u\Vert^{p}_{p}\right) +b (p\theta +\alpha -1)\Vert u\Vert^{p\theta} -\lambda q
(q +\alpha -1)\int_{\Omega} F(x,u^+) < 0.
\end{equation}
}
Therefore by the implicit function theorem at the point $ (0, 1)$ there exist $ \delta >0$ such that for any $ \phi \in B_{X_0}(0, \delta),$ the equation
$ H(\phi, s) =0$ has a unique stricly positive continuous solution $ s = \zeta(\phi),$ and $ \zeta(0) =1.$ Since $ H(\phi, \zeta(\phi)) =0$ for 
$ \phi \in B_{X_0}(0, \delta),$ we obtain
\begin{equation*}
\begin{aligned}
&0 =H(\phi, \zeta(\phi)) =(\zeta(\phi))^{p +\alpha -1} \left( a\Vert u +\phi\Vert^{p} +\Vert u +\phi\Vert^{p}_{p}\right) +b (\zeta(\phi))^{p\theta +\alpha -1}\Vert u +\phi\Vert^{p\theta} \\
& -\int_{\Omega}c(x)((u +\phi)^+)^{1 -\alpha} - \lambda q (\zeta(\phi))^{q +\alpha -1} \int_{\Omega} F(x,(u +\phi)^+)\\
&=(\zeta(\phi))^{\alpha -1} \left( a (\zeta(\phi)\Vert u +\phi\Vert)^{p} +(\zeta(\phi)\Vert u +\phi\Vert)^{p}_{p}\right) +b (\zeta(\phi))^{\alpha -1}(\zeta(\phi) \Vert u +\phi\Vert)^{p\theta} \\
& -\zeta(\phi)^{\alpha -1}\int_{\Omega} c(x) (\zeta(\phi)((u +\phi)^+)^{1 -\alpha} - \lambda q (\zeta(\phi))^{\alpha -1} \int_{\Omega} F(x,\zeta(\phi)(u +\phi)^+)\\
&=(\zeta(\phi))^{\alpha -1} \left( a\Vert \zeta(\phi)(u +\phi)\Vert^{p} +\Vert\zeta(\phi)(u +\phi)\Vert^{p}_{p}\right) +b \zeta(\phi)^{\alpha -1}\Vert \zeta(\phi)(u +\phi)\Vert^{p\theta} \\
& -\zeta(\phi)^{\alpha -1}\int_{\Omega} c(x)(\zeta(\phi)(u +\phi)^+))^{1 -\alpha} - \lambda q \zeta(\phi)^{\alpha -1} \int_{\Omega} F(x,\zeta(\phi)(u +\phi)^+)\\
\end{aligned}
\end{equation*}
That is to say $ \zeta(\phi)(u +\phi) \in \mathcal{N}_{\lambda}$ for $ \phi \in B_{X_0}(0, \delta).$ Also we have
\footnotesize
\begin{equation*}
\begin{aligned}
& \frac{\partial H}{\partial s}(\phi, \zeta(\phi))  = (p +\alpha -1)\zeta(\phi)^{\alpha -2} \left( a\Vert \zeta(\phi)(u +\phi)\Vert^{p} +\Vert \zeta(\phi)(u +\phi)\Vert^{p}_{p}\right)\\
& +b (p\theta +\alpha -1) \zeta(\phi)^{\alpha -2}\Vert \zeta(\phi)(u +\phi)\Vert^{p\theta} -\lambda q (q +\alpha -1) \zeta(\phi)^{\alpha -2} \int_{\Omega} F(x,\zeta(\phi)(u +\phi)^+)dx
\end{aligned}
\end{equation*} 
Therefore as in \ref{e14} we can choose $ 0 <\epsilon \leq \delta,$ small enough such that for any $ \phi \in B_{X_0}(0, \epsilon)$ we find
\footnotesize
\begin{equation*}
\begin{aligned}
&(p +\alpha -1)\left( a\Vert \zeta(\phi)(u +\phi)\Vert^{p} +\Vert \zeta(\phi)(u +\phi)\Vert^{p}_{p}\right) +b (p\theta +\alpha -1) \Vert \zeta(\phi)(u +\phi)\Vert^{p\theta}\\
&- \lambda q (q +\alpha -1) \int_{\Omega} F(x,\zeta(\phi)(u +\phi))dx < 0.
\end{aligned}
\end{equation*} 
Which means that $ \zeta(\phi)(u +\phi) \in \mathcal{N}_{\lambda}^{-}$ for all $ \phi \in B_{X_0}(0, \epsilon).$\newline
The proof for $ \mathcal{N}_{\lambda}^{+}$ is analogous.
\end{proof}
\begin{lemma} \label{l5}
The funcational $ J_{\lambda}$ is coercive and bounded from bellow on $ \mathcal{N}_{\lambda}.$ 
\end{lemma}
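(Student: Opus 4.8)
The plan is to use the Nehari constraint to cancel the term $\lambda\int F(x,u^+)$, whose growth (critical when $q=p_s^*$) is the only genuinely dangerous contribution, and then to read off both coercivity and a lower bound from the remaining purely polynomial expression in $\Vert u\Vert$. First I would write the energy explicitly by evaluating the fibre map at $t=1$,
\begin{equation*}
J_{\lambda}(u)=\varphi_u(1)=\tfrac1p\bigl(a\Vert u\Vert^p+\Vert u\Vert_p^p\bigr)+\tfrac{b}{p\theta}\Vert u\Vert^{p\theta}-\tfrac{1}{1-\alpha}\int c(x)(u^+)^{1-\alpha}-\lambda\int F(x,u^+).
\end{equation*}

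Next, for $u\in\mathcal N_\lambda$ the Nehari identity $\varphi_u'(1)=0$ (from \ref{ph1} with $t=1$) reads
\begin{equation*}
\bigl(a\Vert u\Vert^p+\Vert u\Vert_p^p\bigr)+b\Vert u\Vert^{p\theta}-\int c(x)(u^+)^{1-\alpha}=\lambda q\int F(x,u^+),
\end{equation*}
so that $\lambda\int F(x,u^+)=\frac1q\bigl[(a\Vert u\Vert^p+\Vert u\Vert_p^p)+b\Vert u\Vert^{p\theta}-\int c(x)(u^+)^{1-\alpha}\bigr]$. Substituting this into $J_\lambda$ eliminates the $F$-term entirely and leaves
\begin{equation*}
J_{\lambda}(u)=\Bigl(\tfrac1p-\tfrac1q\Bigr)\bigl(a\Vert u\Vert^p+\Vert u\Vert_p^p\bigr)+\Bigl(\tfrac{1}{p\theta}-\tfrac1q\Bigr)b\Vert u\Vert^{p\theta}-\Bigl(\tfrac{1}{1-\alpha}-\tfrac1q\Bigr)\int c(x)(u^+)^{1-\alpha}.
\end{equation*}

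Then I would fix the signs of the coefficients using the ordering $1<p<p\theta<q\le p_s^*$ together with $0<\alpha<1$: both $\frac1p-\frac1q$ and $\frac{1}{p\theta}-\frac1q$ are strictly positive, while $\frac{1}{1-\alpha}-\frac1q>0$ because $1-\alpha<1<q$, so the two leading terms are favourable and only the singular term is negative. Discarding the nonnegative $\Vert u\Vert_p^p$ contribution and estimating the singular integral by \ref{e2}, namely $\int c(x)(u^+)^{1-\alpha}\le \Vert c\Vert_{L^{\infty}}\vert\Omega\vert^{\frac{p_s^*+\alpha-1}{p_s^*}}S_p^{-\frac{1-\alpha}{p}}\Vert u\Vert^{1-\alpha}$, yields
\begin{equation*}
J_{\lambda}(u)\ge \Bigl(\tfrac1p-\tfrac1q\Bigr)a\Vert u\Vert^p+\Bigl(\tfrac{1}{p\theta}-\tfrac1q\Bigr)b\Vert u\Vert^{p\theta}-C\Vert u\Vert^{1-\alpha},
\end{equation*}
with $C$ the explicit positive constant coming from \ref{e2}.

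Finally, the right-hand side is a continuous function $g(r)=A r^{p}+B r^{p\theta}-C r^{1-\alpha}$ of $r=\Vert u\Vert$ with $A,B,C>0$ and exponents $0<1-\alpha<p<p\theta$. Since the dominant power $p\theta$ carries a positive coefficient, $g(r)\to+\infty$ as $r\to\infty$, which is exactly coercivity; and since $g$ is continuous on $[0,\infty)$ with $g(0)=0$ and $g(r)\to+\infty$, it attains a finite global minimum, so $J_\lambda$ is bounded from below on $\mathcal N_\lambda$. I expect the only real subtlety to be the bookkeeping that isolates and removes the $F$-term through the constraint; once that term is gone no compactness or growth control on $f$ is needed, so the same computation covers the subcritical and the critical case uniformly.
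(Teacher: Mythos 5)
Your proof is correct and rests on the same basic strategy as the paper's: subtract a multiple of the Nehari identity $\varphi_u'(1)=0$ from $J_\lambda(u)$, and then control the remaining singular integral via \ref{e2} to reduce everything to a one-variable function of $\Vert u\Vert$. The difference is which multiple you subtract. You use $\tfrac1q\varphi_u'(1)$, which eliminates the $F$-term exactly and leaves
\begin{equation*}
J_{\lambda}(u)=\Bigl(\tfrac1p-\tfrac1q\Bigr)\bigl(a\Vert u\Vert^p+\Vert u\Vert_p^p\bigr)+b\Bigl(\tfrac{1}{p\theta}-\tfrac1q\Bigr)\Vert u\Vert^{p\theta}-\Bigl(\tfrac{1}{1-\alpha}-\tfrac1q\Bigr)\int c(x)(u^+)^{1-\alpha},
\end{equation*}
which is precisely the identity the paper itself records later in Corollary \ref{c1}. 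The paper's proof of Lemma \ref{l5} instead subtracts $\tfrac{1}{p\theta}\varphi_u'(1)$, which kills the Kirchhoff term $b\Vert u\Vert^{p\theta}$ but leaves a residual term $\lambda\bigl(\tfrac{q}{p\theta}-1\bigr)\int F(x,u^+)$ that it then simply discards; since $F$ is sign-changing, discarding that term is only immediate when $\int F(x,u^+)\ge 0$, so your choice of multiplier is the cleaner one and works uniformly on all of $\mathcal N_{\lambda}$ without any case distinction or sign information on $F$. A secondary payoff of your version is that the positive $b\Vert u\Vert^{p\theta}$ term survives, so you get coercivity of order $\Vert u\Vert^{p\theta}$ rather than just $\Vert u\Vert^{p}$. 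Both arguments use the same standing ordering $0<1-\alpha<p<p\theta<q\le p_s^*$ and the bound \ref{e2}, and both conclude by observing that $Ar^{p}+Br^{p\theta}-Cr^{1-\alpha}$ (respectively $Ar^p-Cr^{1-\alpha}$) tends to $+\infty$ and is bounded below. I see no gap in your argument.
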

\begin{proof}
Let $ u \in \mathcal{N}_{\lambda},$ then by \ref{e1} we have
\footnotesize
\begin{equation*}
\begin{aligned}
&J(u) =J(u) -\frac{1}{p\theta} \langle J_{\lambda}^{\prime}, u \rangle\\
     &= \left( \frac{1}{p} -\frac{1}{p\theta}\right)\left(a \Vert u \Vert^p + \Vert u \Vert^{p}_p\right) -\left(\frac{1}{1 -\alpha} - \frac{1}{p\theta} \right)\int c(x)(u^+)^{-\alpha+1} -\lambda \left( 1 -\frac{q}{p\theta}\right)\int F(x,u^+)dx.\\
     &\geq  a\left( \frac{1}{p} -\frac{1}{p\theta}\right)\Vert u \Vert^p -\left(\frac{1}{1 -\alpha} - \frac{1}{p\theta} \right)\int c(x)(u^+)^{-\alpha+1}\\
     &\geq  a\left( \frac{1}{p} -\frac{1}{p\theta}\right)\Vert u \Vert^p -\left(\frac{1}{1 -\alpha} - \frac{1}{p\theta} \right)\left( \Vert c \Vert_{L^{q^{\prime}}}\vert \Omega \vert^{\frac{p_s^* +\alpha -1}{p_s^*}}S^{\frac{-\alpha+1}{p}} \Vert u \Vert^{-\alpha+1}\right)\\
     &\geq  a\left( \frac{1}{p} -\frac{1}{p\theta}\right)\Vert u \Vert^p -\Vert c \Vert_{L^{q^{\prime}}}\left(\frac{1}{1 -\alpha} - \frac{1}{p\theta} \right)\vert \Omega \vert^{\frac{p_s^* +\alpha -1}{p_s^*}}S^{\frac{-\alpha+1}{p}} \Vert u \Vert^{1 -\alpha}\\
     &\geq C
\end{aligned}
\end{equation*}
Since $ 1 -\alpha < p < p\theta,$ we get that the functional $ J_{\lambda}$ is coercive. Moreover
{\scriptsize
\begin{equation*}
\begin{aligned}
&C =-\left(\frac{1}{1 -\alpha} - \frac{1}{p\theta} \right)^{\frac{p}{p +\alpha +1}}\Vert c \Vert_{L^{q^{\prime}}}^{\frac{p}{p +\alpha+1}}\vert \Omega \vert^{\frac{p}{p_s^*}\frac{p_s^* +\alpha -1}{p +\alpha -1}}S^{\frac{1 -\alpha}{p +\alpha +1}}\left( a\left( \frac{1}{p} -\frac{1}{p\theta}\right) \right)^{\frac{\alpha -1}{p +\alpha -1}}\\
&\left(\frac{1 -\alpha}{p}\right)^{\frac{p}{p +\alpha -1}}\left(\frac{p +\alpha -1}{1 -\alpha}\right) < 0.
\end{aligned}
\end{equation*}
}
where $ C$ is the minimum of the function $ h$ defined as 
\begin{equation*}
h(s) =a\left( \frac{1}{p} -\frac{1}{p\theta}\right)s^{\frac{p}{1 -\alpha}} -\Vert c \Vert_{L^{q^{\prime}}}\left(\frac{1}{1 -\alpha} - \frac{1}{p\theta} \right)\vert \Omega \vert^{\frac{p_s^* +\alpha -1}{p_s^*}}S^{\frac{-\alpha+1}{p}}s,
\end{equation*}
Attained at $ s_m$ given by
\begin{equation*}
s_m =\left( \frac{\Vert c \Vert_{L^{q^{\prime}}}\left(\frac{1}{1 -\alpha} - \frac{1}{p\theta} \right)\vert \Omega \vert^{\frac{p_s^* +\alpha -1}{p_s^*}}S^{\frac{-\alpha+1}{p}}}{\frac{a p}{1 -\alpha}\left( \frac{1}{p} -\frac{1}{p\theta}\right)} \right)^{\frac{1- \alpha}{p +\alpha -1}}.
\end{equation*}
\end{proof}
\section{main results}
\begin{corollary}\label{c1}
let $ u \in \mathcal{N}_{\lambda}^{+},$ then by \ref{el1}, \ref{e10} and \ref{e11} we get
\footnotesize
\begin{equation}
\begin{aligned}
J(u) &=  \frac{1}{p} \left(a \Vert u \Vert^p + \Vert u \Vert^{p}_p\right) + \frac{b}{p \theta}\Vert u \Vert^{p \theta}  -\frac{1}{-\alpha+1} \int c(x)( u^+)^{-\alpha+1} - \lambda \int F(x,u^+)dx\\
     &= \left( \frac{1}{p} -\frac{1}{q}\right)\left( a \Vert u \Vert^p + \Vert u \Vert_{p}^{p}\right) +b\left( \frac{1}{p\theta} -\frac{1}{q}\right)\Vert u \Vert^{p \theta} -\left(\frac{1}{1-\alpha} -\frac{1}{q}\right)\int c(x)(u^+)^{-\alpha+1}\\
     &\leq  -\frac{(q-p)(p +\alpha -1)}{q p (1 -\alpha)}\left( a \Vert u \Vert^p + \Vert u \Vert^{p}\right) -b\frac{(q -p\theta)(p\theta +\alpha -1)}{q p\theta (1 -\alpha)}\Vert u \Vert^{p \theta} < 0
\end{aligned}
\end{equation}
\end{corollary}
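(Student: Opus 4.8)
The plan is to evaluate $J$ on the manifold by first eliminating the homogeneous term $\lambda\int F(x,u^+)$ through the Nehari constraint, and then to control the remaining singular integral using the sign condition $\varphi_u''(1)>0$ that characterises $\mathcal{N}_\lambda^+$. Since $u\in\mathcal{N}_\lambda^+\subset\mathcal{N}_\lambda$, the identity \eqref{el1} gives
\begin{equation*}
\lambda q\int F(x,u^+)=\left(a\Vert u\Vert^{p}+\Vert u\Vert_{p}^{p}\right)+b\Vert u\Vert^{p\theta}-\int c(x)(u^+)^{-\alpha+1},
\end{equation*}
so that $\lambda\int F(x,u^+)$ can be substituted into the definition \eqref{j} of $J$. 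Collecting the coefficients of $a\Vert u\Vert^{p}+\Vert u\Vert_{p}^{p}$, of $b\Vert u\Vert^{p\theta}$ and of $\int c(x)(u^+)^{-\alpha+1}$ then produces exactly the equality line of the statement,
\begin{equation*}
J(u)=\left(\tfrac1p-\tfrac1q\right)\left(a\Vert u\Vert^{p}+\Vert u\Vert_{p}^{p}\right)+b\left(\tfrac1{p\theta}-\tfrac1q\right)\Vert u\Vert^{p\theta}-\left(\tfrac1{1-\alpha}-\tfrac1q\right)\int c(x)(u^+)^{-\alpha+1}.
\end{equation*}
This step is purely algebraic; the only thing to note is that $\tfrac1{1-\alpha}-\tfrac1q=\tfrac{q+\alpha-1}{q(1-\alpha)}>0$, which holds because $q>1>1-\alpha$.

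Next I would exploit membership in $\mathcal{N}_\lambda^+$, which means $\varphi_u''(1)>0$. Inserting this into the second representation \eqref{e11} of $\varphi_u''(1)$ yields the lower bound
\begin{equation*}
(q+\alpha-1)\int c(x)(u^+)^{-\alpha+1}>(q-p)\left(a\Vert u\Vert^{p}+\Vert u\Vert_{p}^{p}\right)+b(q-p\theta)\Vert u\Vert^{p\theta}.
\end{equation*}
Because the coefficient $\tfrac1{1-\alpha}-\tfrac1q$ that multiplies the singular integral in $J(u)$ is positive, this lower bound on the integral translates, after multiplication by the negative sign, into an upper bound for $J(u)$. Substituting it and simplifying the two resulting coefficients through the elementary identities
\begin{equation*}
\tfrac1p-\tfrac1q-\tfrac{q-p}{q(1-\alpha)}=-\frac{(q-p)(p+\alpha-1)}{qp(1-\alpha)},\qquad
\tfrac1{p\theta}-\tfrac1q-\tfrac{q-p\theta}{q(1-\alpha)}=-\frac{(q-p\theta)(p\theta+\alpha-1)}{qp\theta(1-\alpha)}
\end{equation*}
gives precisely the final displayed bound of the corollary.

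It then remains to read off the sign. Under the standing hypotheses $0<\alpha<1$ and $1<p\theta<q\le p_s^*$, every factor $q-p$, $p+\alpha-1$, $q-p\theta$, $p\theta+\alpha-1$ and $1-\alpha$ is strictly positive, so both terms on the right are strictly negative and hence $J(u)<0$. I do not expect a genuine obstacle here: the argument is a direct substitution followed by careful sign bookkeeping, the one delicate point being that a \emph{lower} bound on $\int c(x)(u^+)^{-\alpha+1}$, once multiplied by the negative coefficient $-(\tfrac1{1-\alpha}-\tfrac1q)$, reverses to produce the desired \emph{upper} bound on $J(u)$. The nontriviality $u\neq0$ needed to keep the norms from collapsing is already guaranteed, for $\lambda<\lambda_{**}$, by the gap structure established in Lemma \ref{l3}.
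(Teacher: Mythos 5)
Your argument is correct and is essentially the paper's own: substitute the Nehari identity \eqref{el1} to eliminate $\lambda\int F(x,u^+)$, then use the representation \eqref{e11} of $\varphi_u''(1)$ together with $\varphi_u''(1)>0$ to bound $\int c(x)(u^+)^{-\alpha+1}$ from below, and combine coefficients; your two coefficient identities and the final sign check match the paper's computation exactly. The only cosmetic difference is that you invoke the gap structure of Lemma \ref{l3} to rule out $u=0$, whereas this already follows from $u\in\mathcal{N}_{\lambda}^{+}$ (since $0\in\mathcal{N}_{\lambda}^{0}$).
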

\begin{corollary}\label{ccr}
From lemma \ref{l2} and lemma \ref{l5} for all $ \lambda < \lambda_{**},$ we can define
\begin{equation*}
m^{-} =\inf_{u \in \mathcal{N}_{\lambda}^{-}} J_{\lambda}(u), \quad m^{+} =\inf_{u \in \mathcal{N}_{\lambda}^{+} \cup \{0\}} J_{\lambda}(u). 
\end{equation*} 
We have by lemma \ref{l2}, \ref{l3} that $ \mathcal{N}_{\lambda}^{0} = \{ 0\},$ and $ \mathcal{N}_{\lambda}^{-}$ is a closed in $ X_0,$ so to find the infimum of 
$ J_{\lambda}$ on $ \mathcal{N}_{\lambda}^{-}$ and on $ \mathcal{N}_{\lambda}^{+} \cup \{ 0\}.$ We use the Ekeland variational principle, let $ (u_n)_n \in \mathcal{N}_{\lambda}^{-}, \mathcal{N}_{\lambda}^{+} \cup \{ 0\},$ be a minimizing sequence for $ J_{\lambda},$ then
\begin{equation}\label{ek1}
m^{-} < J_{\lambda}(u_n) < m^{-} + \frac{1}{n},\quad \textit{and} \quad  m^{+} < J_{\lambda}(u_n) < m^{+} + \frac{1}{n},
\end{equation}
and
\begin{equation}\label{ek2}
J_{\lambda}(u) \geq J_{\lambda}(u_n) -\frac{1}{n} \Vert u -u_n \Vert, \quad \textit{for all} \quad u \in \mathcal{N}_{\lambda},
\end{equation}
and by \ref{ek2} we see that $ J_{\lambda}(u_n) \longrightarrow \inf_{u \in \mathcal{N}_{\lambda}^{+} \cup \{0\}} J_{\lambda}(u)$ as $ n \to \infty.$  
By corollary \ref{c1} and the SLC of norm
\begin{equation}\label{slc}
J_{\lambda}(u_*) \leq \lim_{n \to \infty} \inf_{u \in \mathcal{N}_{\lambda}^{+} \cup \{0\}}J_{\lambda}(u_n) =m_+ < 0.
\end{equation}
Due to the coercivity of $ J_{\lambda}$ the sequence $ (u_n)_n$ is bounded in $ X_0,$ Hence up to subsequence denoted $ (u_n)_n$ we have
\begin{itemize}\label{v02}
\item  $ u_n \rightharpoonup u_*$ \,\, \textit{converges weakly to some} \,\, $ u_* \in X_0,$
\item  $ u_n \longrightarrow u_*$ \,\, \textit{ strongly in} \,\, $ L^{\eta}(\Omega)$ \,\, \textit{for}\,\,$ \eta \in [1, p_s^*),$
\item  $ u_n(x) \longrightarrow u_*(x)$ \,\, \textit{a.e. in} \,\, $ \Omega$
\end{itemize}
As $ n \to \infty.$
\end{corollary}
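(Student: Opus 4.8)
The plan is to justify that, for $\lambda < \lambda_{**}$, the two infima $m^-$ and $m^+$ are well posed and that any minimizing sequence produced by Ekeland's principle enjoys the compactness asserted at the end of the statement. First I would record that Lemma \ref{l5} supplies coercivity and boundedness from below of $J_\lambda$ on all of $\mathcal{N}_\lambda$; consequently $m^- = \inf_{\mathcal{N}_\lambda^-} J_\lambda$ and $m^+ = \inf_{\mathcal{N}_\lambda^+ \cup \{0\}} J_\lambda$ are finite real numbers, so the definitions make sense. By Lemma \ref{l2} one has $\mathcal{N}_\lambda^0 = \{0\}$ for $\lambda < \lambda_{**}$, which ensures that $\mathcal{N}_\lambda^+ \cup \{0\}$ and $\mathcal{N}_\lambda^-$ exhaust the relevant part of $\mathcal{N}_\lambda$ without meeting the degenerate layer, while by Corollary \ref{cl3} the set $\mathcal{N}_\lambda^-$ is closed in $X_0$ and hence a complete metric space in the induced metric.

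Next I would apply the Ekeland variational principle on each complete set separately. This yields minimizing sequences $(u_n)$ obeying the two-sided energy bound \ref{ek1} together with the almost-critical inequality \ref{ek2}, which is exactly the content displayed; from \ref{ek1} one reads off $J_\lambda(u_n) \to m^{\pm}$. I would then invoke coercivity once more: since $J_\lambda(u_n)$ stays bounded and $J_\lambda$ is coercive, $(u_n)$ is bounded in $X_0$. As $X_0$ is a reflexive, uniformly convex Banach space, a subsequence converges weakly, $u_n \rightharpoonup u_*$; the compact embedding $X_0 \hookrightarrow L^{\eta}(\Omega)$ for $\eta \in [1, p_s^*)$ upgrades this to strong convergence in each such $L^{\eta}$, and a further extraction gives the pointwise convergence $u_n(x) \to u_*(x)$ a.e.\ in $\Omega$.

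For the sign of the limiting energy I would use Corollary \ref{c1}, which shows $J_\lambda < 0$ on $\mathcal{N}_\lambda^+$, so that $m^+ < 0$; combining the weak (sequential) lower semicontinuity of the Gagliardo norm with the strong $L^{\eta}$ convergence of the lower-order, singular, and homogeneous terms then gives $J_\lambda(u_*) \le \liminf_n J_\lambda(u_n) = m^+ < 0$. In particular, for the $\mathcal{N}_\lambda^+ \cup \{0\}$ problem the strict inequality $m^+ < 0 = J_\lambda(0)$ immediately rules out $u_* = 0$; for the $\mathcal{N}_\lambda^-$ problem one instead leans on the gap estimate of Lemma \ref{l3}, which keeps $\|u_n\|$ uniformly bounded below by $\eta_\lambda$, to prevent the limit from collapsing onto the zero layer.

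The main obstacle I anticipate is passing to the limit in the nonsmooth terms rather than in the leading Kirchhoff term. The norm contribution is controlled by weak lower semicontinuity, but the singular integral $\int c(x)(u^+)^{1-\alpha}$ and the homogeneous term $\int F(x,u^+)$ are only continuous under strong convergence; handling them requires the a.e.\ convergence together with the uniform domination provided by \ref{e1} and \ref{e2} (Hölder combined with the Sobolev constant $S_p$), so that dominated convergence, or Fatou's lemma applied in the correct direction, is legitimate. The secondary delicate point, which the later existence argument must address, is to promote the weak convergence of $(u_n)$ to strong convergence in $X_0$ so that $u_*$ genuinely lies in the respective part of the Nehari manifold and attains the infimum; this is where the gap structure and the closedness of $\mathcal{N}_\lambda^-$ are decisive.
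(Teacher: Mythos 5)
Your proposal follows essentially the same route as the paper's own (inline) justification: Lemma \ref{l5} for coercivity and boundedness from below to make $m^{\pm}$ well defined, Lemma \ref{l2} and Corollary \ref{cl3} for $\mathcal{N}_{\lambda}^{0}=\{0\}$ and closedness of $\mathcal{N}_{\lambda}^{-}$, Ekeland's principle for \ref{ek1}--\ref{ek2}, Corollary \ref{c1} plus sequential lower semicontinuity for $J_{\lambda}(u_*)\le m^{+}<0$, and coercivity plus reflexivity and the compact embedding for the weak, $L^{\eta}$, and a.e.\ convergences. Your added remarks on dominating the singular and homogeneous terms and on the later need to upgrade to strong convergence are sensible refinements of the same argument, not a different approach.
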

\begin{lemma}\label{l6}
Let $ \lambda \in (0, \lambda_*), (u_n)_n \in \mathcal{N}$ we have the following inequalities hold for any $ n \in \mathbb{N},$
\begin{itemize}
\item If $ (u_n)_n \in \mathcal{N}_{\lambda}^{+},$ then
\begin{equation*}
(p +\alpha -1) \left(a\Vert u_n \Vert^p + \Vert u_n \Vert_{p}^{p}\right) + b(p \theta +\alpha -1)\Vert u_n \Vert^{p \theta} - \lambda q (q +\alpha -1) \int F(x,u_n^+) >0.
\end{equation*}
\item If $ (u_n)_n \in \mathcal{N}_{\lambda}^{-},$ then
\begin{equation*}
(p +\alpha -1) \left(a\Vert u_n \Vert^p + \Vert u_n \Vert_{p}^{p}\right) + b(p \theta +\alpha -1)\Vert u_n \Vert^{p \theta} - \lambda q (q +\alpha -1) \int F(x,u_n^+) <0.
\end{equation*}
\end{itemize}
\end{lemma}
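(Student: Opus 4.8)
The plan is to recognize that the expression on the left-hand side of both claimed inequalities is nothing other than the second derivative of the fiber map evaluated at $t=1$. Concretely, I would start from the second-derivative formula \ref{ph2} taken at $t=1$ (equivalently \ref{el2}), in which the singular term $\alpha\int c(x)(u^+)^{-\alpha+1}$ appears. For any $u\in\mathcal{N}_{\lambda}$ the Nehari constraint $\phi^{\prime}_{u}(1)=0$ recorded in \ref{el1} lets me solve for that singular integral,
\[
\int c(x)(u^+)^{-\alpha+1}=\bigl(a\Vert u\Vert^p+\Vert u\Vert_p^p\bigr)+b\Vert u\Vert^{p\theta}-\lambda q\int F(x,u^+),
\]
and substituting this back collapses \ref{el2} into the compact identity already stated as \ref{e10},
\[
\phi^{\prime\prime}_{u}(1)=(p+\alpha-1)\bigl(a\Vert u\Vert^p+\Vert u\Vert_p^p\bigr)+b(p\theta+\alpha-1)\Vert u\Vert^{p\theta}-\lambda q(q+\alpha-1)\int F(x,u^+).
\]
Thus the quantity appearing in the statement of the lemma is exactly $\phi^{\prime\prime}_{u_n}(1)$ for each $u_n\in\mathcal{N}_{\lambda}$.

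With this identification in hand the conclusion is immediate from the very definition of the decomposition of the Nehari manifold. Since $\mathcal{N}_{\lambda}^{+}=\{u\in\mathcal{N}_{\lambda}:\phi^{\prime\prime}_{u}(1)>0\}$, any $u_n\in\mathcal{N}_{\lambda}^{+}$ satisfies $\phi^{\prime\prime}_{u_n}(1)>0$, which by the identity above is precisely the first inequality; likewise $\mathcal{N}_{\lambda}^{-}=\{u\in\mathcal{N}_{\lambda}:\phi^{\prime\prime}_{u}(1)<0\}$ forces $\phi^{\prime\prime}_{u_n}(1)<0$, giving the second. Because this reasoning is applied term by term, it holds for every $n\in\mathbb{N}$. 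The hypothesis $\lambda\in(0,\lambda_*)$ plays no role in the sign computation itself; it serves only to guarantee, via Lemma \ref{l1}, that the sets $\mathcal{N}_{\lambda}^{\pm}$ are nonempty so that the statement has content.

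I do not anticipate any genuine obstacle here. The sole substantive ingredient is the algebraic identity \ref{e10}, which has already been established by eliminating the singular integral through the Nehari constraint; everything else is a direct reading of the membership conditions defining $\mathcal{N}_{\lambda}^{+}$ and $\mathcal{N}_{\lambda}^{-}$. In writing the proof out I would simply invoke \ref{e10} together with these two definitions and read off the corresponding strict sign in each case.
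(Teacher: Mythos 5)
Your reduction is correct as far as it goes: substituting the Nehari constraint \ref{el1} into \ref{el2} does produce the identity \ref{e10}, so the quantity in the statement is exactly $\phi^{\prime\prime}_{u_n}(1)$, and the strict sign for each fixed $n$ is then literally the definition of $\mathcal{N}_{\lambda}^{+}$ and $\mathcal{N}_{\lambda}^{-}$. If the lemma meant no more than what its displayed inequalities say, your two-line argument would settle it and the paper's page-long contradiction proof would be redundant.

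But that is not what the paper's proof establishes, nor what the lemma is later used for. The paper rewrites the claim as the equivalent inequality \ref{e01}, then assumes for contradiction that along the weakly convergent sequence the inequality degenerates to an equality in the limit (\ref{e03}--\ref{e04}), and extracts a contradiction from the strict bound $\bar{\psi}(t_m)-\lambda\int F(x,u_n^{+})\,dx>0$ of Lemma \ref{l1} --- which is precisely where the hypothesis $\lambda\in(0,\lambda_*)$ enters. (Your remark that this hypothesis ``plays no role'' is the warning sign: in a proof by pure definition it indeed plays no role, which signals that the definitional reading misses the point.) What the contradiction argument delivers is that $\phi^{\prime\prime}_{u_n}(1)$ stays bounded away from $0$ as $n\to\infty$, i.e.\ $\liminf_n \phi^{\prime\prime}_{u_n}(1)>0$ in the $\mathcal{N}_{\lambda}^{+}$ case. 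That uniform version is exactly what is invoked downstream: in the proof of Lemma \ref{l7}, display \ref{e010}, the expression is replaced by a constant $C_1>0$ independent of $n$ and one needs $C_1-(1-\alpha)C_3/n>0$ for $n$ large. Your argument gives $\phi^{\prime\prime}_{u_n}(1)>0$ for each $n$ but is entirely compatible with $\phi^{\prime\prime}_{u_n}(1)\to 0$, in which case no such $C_1$ exists and the later estimate collapses. So the gap is not in the logic of what you wrote but in what it proves: you verify the letter of the statement, while its substance --- a quantitative, $n$-uniform bound along minimizing sequences satisfying \ref{ek2} --- requires the limit argument you bypassed.
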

\begin{proof}
From \ref{el1} and \ref{el2} it is equivalent to prove the following inequality for any $ n \in \mathbb{N}$ 
\begin{equation}\label{e01}
(q -p) \left(a\Vert u_n \Vert^p + \Vert u_n \Vert_{p}^{p}\right) + b(q -p \theta) \Vert u_n \Vert^{p \theta} <(q +\alpha -1)\int c(x)(u_n^+)^{-\alpha+1}
\end{equation}
we will prove \ref{e01} by contradiction suppose that
\begin{equation}\label{e03}
\lim_{n \to \infty}(q-p) \left(a\Vert u_n \Vert^p + \Vert u_n \Vert_{p}^{p}\right) + b(q -p\theta) \Vert u_n \Vert^{p \theta} \geq (q +\alpha -1)\int c(x)( u_*^+)^{-\alpha+1} 
\end{equation}
Since $ u_n \in \mathcal{N}_{\lambda}^{+},$ for $ n \in \mathbb{N}$ we get 
{\scriptsize
\begin{equation*}
\lim_{n \to \infty}\inf\left((q-p) \left(a\Vert u_n \Vert^p + \Vert u_n \Vert_{p}^{p}\right) + b(q -p\theta) \Vert u_n \Vert^{p \theta}\right) < \lim_{n \to \infty}\sup(q +\alpha -1)\int c(x)(u_n^+)^{-\alpha+1}
\end{equation*}
}
Then by \ref{e02}
\begin{equation}\label{e04}
\lim_{n \to \infty}(q-p) \left(a\Vert u_n \Vert^p + \Vert u_* \Vert_{p}^{p}\right) + b(q -p\theta) \Vert u_n \Vert^{p \theta} \leq (q +\alpha -1)\int c(x)( u_*^+)^{-\alpha+1}.
\end{equation}
hence by combining \ref{e03} and \ref{e04} we get 
\begin{equation*}
\lim_{n \to \infty}(q-p) \left(a\Vert u_n \Vert^p + \Vert u_* \Vert_{p}^{p}\right) + b(q -p\theta) \Vert u_n \Vert^{p \theta} =(q +\alpha -1)\int c(x)( u_*^+)^{-\alpha+1}.
\end{equation*}
then there exist $ \beta >0$ such that $ \Vert u_n \Vert^p \longrightarrow \beta.$ Therefore
\begin{equation}\label{e06}
(q-p) \left(a\beta + \Vert u_* \Vert_{p}^{p}\right) + b(q -p\theta) \beta^{\theta} =(q +\alpha -1)\int c(x)(u_*^+)^{-\alpha+1}.
\end{equation}
We know frome lemma \ref{l1} for any $ \lambda \in (0, \lambda_*)$ that
\begin{equation}\label{e07}
\bar{\psi}(t_m) -\lambda\int_{\Omega}F(x,u_n^+)dx >0,
\end{equation}
Therefore by \ref{e06}  we find 
\footnotesize
\begin{equation}
\begin{aligned}
 &0 < \left( a \Vert u_n \Vert^p + \Vert u_n \Vert_{p}^{p}\right)^{\frac{q+ \alpha-1}{p+ \alpha-1}}\left(\int c(x)(u_n^+)^{-\alpha+1}\right)^{\frac{p-q}{p+\alpha -1}}\left(\frac{(q-p)}{q+\alpha-1} \right)^{\frac{q+\alpha-1}{p+\alpha-1}}\left(\frac{p+\alpha -1}{q-p}\right)\\ 
& -\lambda\int_{\Omega}F(x,u_n^+)dx\\
 0 &< \left( a \Vert u_n \Vert^p + \Vert u_n \Vert_{p}^{p}\right)^{\frac{q+ \alpha-1}{p+ \alpha-1}}\left(\int c(x)(u_n^+)^{-\alpha+1}\right)^{\frac{p-q}{p+\alpha -1}}\left(\frac{(q-p)}{q+\alpha-1} \right)^{\frac{q+\alpha-1}{p+\alpha-1}}\left(\frac{p+\alpha -1}{q-p}\right)\\
 & -\frac{1}{q}\left(\left( a \Vert u_n \Vert^p + \Vert u_n \Vert_{p}^{p}\right) + b \Vert u_n \Vert^{p \theta}  - \int c(x)(u_n^+)^{-\alpha+1}\right)\\
 0 &\leq \left( a\beta + \Vert u_* \Vert_{p}^{p}\right)^{\frac{q+ \alpha-1}{p+ \alpha-1}}\left(\frac{(q-p) \left(a\beta + \Vert u_* \Vert_{p}^{p}\right) + b(q -p\theta) \beta^{\theta}}{q +\alpha -1}\right)^{\frac{p-q}{p+\alpha -1}}\left(\frac{(q-p)}{q+\alpha-1} \right)^{\frac{q+\alpha-1}{p+\alpha-1}}\\
 &\left(\frac{p+\alpha -1}{q-p}\right) -\frac{1}{q}\left(\left( a\beta + \Vert u_* \Vert_{p}^{p}\right) + b \beta^{\theta}  - \frac{(q-p) \left(a\beta + \Vert u_* \Vert_{p}^{p}\right) + b(q -p\theta) \beta^{\theta}}{q +\alpha -1}\right)\\  
  &\leq  -b\frac{p\theta +\alpha -1}{q+ \alpha -1}\beta^{\theta} < 0.\\ 
\end{aligned}
\end{equation}
Which is contraduction with \ref{e07} since $ \alpha <1 <p\theta<q.$
\end{proof}
\begin{lemma}\label{l7}
Let $ (u_n)_n \in \mathcal{N}_{\lambda}^{-},$ (resp $ \mathcal{N}_{\lambda}^{+},$) such that up to subsequence  $ u_n \rightharpoonup u_*$ on $ X_0$ and satidfying 
\ref{ek2}, then for $ \lambda \in (0, \lambda_*)$ we have, $ \langle\zeta^{\prime}(0), \Phi \rangle$ is uniformly bounded for any positive $ \Phi$ in $ X_0.$
\end{lemma}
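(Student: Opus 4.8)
The plan is to derive an explicit formula for $\langle\zeta'(0),\Phi\rangle$ by implicitly differentiating the relation that defines $\zeta$, and then to bound the resulting numerator from above and the denominator away from zero, both uniformly in $n$. Throughout, $H_n$ and $\zeta_n$ denote the objects constructed in Lemma \ref{l4} from $u_n$, so that $H_n(\phi,\zeta_n(\phi))=0$ for small $\phi$ and $\zeta_n(0)=1$.

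First I would differentiate $H_n(\phi,\zeta_n(\phi))=0$ along a direction $\Phi\in X_0$ and evaluate at $\phi=0$. Writing $A(u_n,\Phi)$ for the Gagliardo pairing appearing in the weak formulation \ref{weak}, the implicit function theorem yields
\begin{equation*}
\langle\zeta_n'(0),\Phi\rangle=-\frac{\langle\partial_\phi H_n(0,1),\Phi\rangle}{\partial_s H_n(0,1)},\qquad \partial_s H_n(0,1)=\varphi_{u_n}''(1),
\end{equation*}
and, differentiating $H_n$ term by term,
\begin{equation*}
\langle\partial_\phi H_n(0,1),\Phi\rangle=p\big(a+b\theta\Vert u_n\Vert^{p(\theta-1)}\big)A(u_n,\Phi)+p\int(u_n^+)^{p-1}\Phi-(1-\alpha)\int c(x)(u_n^+)^{-\alpha}\Phi-\lambda q\int f(x,u_n^+)\Phi.
\end{equation*}
The denominator is precisely the quantity already analysed in \ref{e14}, \ref{e10} and \ref{e11}.

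Next I would bound the numerator from above. Since $J_\lambda$ is coercive on $\mathcal{N}_\lambda$ (Lemma \ref{l5}) and $(u_n)_n$ satisfies \ref{ek1}, the sequence is bounded in $X_0$; hence $A(u_n,\Phi)$ is controlled by $\Vert u_n\Vert^{p-1}\Vert\Phi\Vert$ through the fractional H\"older inequality, the $L^p$ term through the Sobolev embedding, and the $f$-term through the homogeneity estimate $|f(x,t)|\le C|t|^{q-1}$ together with \ref{e1}; each is bounded in modulus by $C\Vert\Phi\Vert$ with $C$ independent of $n$. The only delicate term is the singular one, but for $\Phi\ge 0$ the integral $\int c(x)(u_n^+)^{-\alpha}\Phi$ is nonnegative and enters with a minus sign, so discarding it can only enlarge the numerator. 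Thus $\langle\partial_\phi H_n(0,1),\Phi\rangle\le C\Vert\Phi\Vert$ uniformly in $n$, with no a priori $L^1$ control of $c(x)(u_n^+)^{-\alpha}\Phi$ needed.

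Finally I would bound the denominator away from zero, which is the heart of the matter. For $u_n\in\mathcal{N}_\lambda^-$, Lemma \ref{l6} gives $\varphi_{u_n}''(1)<0$; rewriting it through \ref{e11}, estimating the singular integral by \ref{e2}, and using the geometric mean inequality on the two norm terms gives
\begin{equation*}
|\varphi_{u_n}''(1)|\ge 2\big(ab(q-p)(q-p\theta)\big)^{1/2}\Vert u_n\Vert^{(p+p\theta)/2}-(q+\alpha-1)C'\Vert u_n\Vert^{1-\alpha},
\end{equation*}
where $C'$ is the constant of \ref{e2}. The right-hand side vanishes exactly at $\Vert u_n\Vert=\eta_0$ and is strictly increasing thereafter, so the gap structure of Lemma \ref{l3}, which forces $\Vert u_n\Vert>\eta_\lambda>\eta_0$, bounds it below by a constant $c_0>0$ independent of $n$. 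Combining with the numerator estimate gives $\langle\zeta_n'(0),\Phi\rangle\le (C/c_0)\Vert\Phi\Vert$ uniformly, as claimed; the case $\mathcal{N}_\lambda^+$ is treated analogously, now with $\varphi_{u_n}''(1)>0$. The main obstacle is exactly this lower bound: it does not suffice that $\varphi_{u_n}''(1)\neq 0$ (Lemma \ref{l6}), one must rule out $\varphi_{u_n}''(1)\to 0$ along the sequence, and it is the strict separation $\eta_\lambda>\eta_0$ provided by Lemma \ref{l3}, together with the favourable sign of the singular term in the numerator, that makes the uniform bound possible.
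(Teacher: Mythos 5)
Your strategy --- an explicit implicit-function-theorem formula $\langle\zeta_n'(0),\Phi\rangle=-\langle\partial_\phi H_n(0,1),\Phi\rangle/\partial_s H_n(0,1)$, an upper bound on the numerator, and a uniform lower bound on $\vert\varphi_{u_n}''(1)\vert$ via the gap structure of Lemma \ref{l3} --- is genuinely different from the paper's, and one piece of it is a real improvement: for $u_n\in\mathcal{N}_\lambda^-$ your bound $\vert\varphi_{u_n}''(1)\vert\geq 2\left(ab(q-p)(q-p\theta)\right)^{1/2}\Vert u_n\Vert^{(p+p\theta)/2}-(q+\alpha-1)C'\Vert u_n\Vert^{1-\alpha}$ combined with $\Vert u_n\Vert\geq\eta_\lambda>\eta_0$ does keep the denominator away from zero, a uniformity the paper itself glosses over. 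But there are two genuine gaps. First, differentiating $H_n$ term by term in $\phi$ presupposes $\int c(x)(u_n^+)^{-\alpha}\Phi<\infty$; that finiteness is exactly what Lemma \ref{l8} proves \emph{later}, using the present lemma, so your formula for the numerator is circular --- a priori that term can be $-\infty$. What survives without it is only the one-sided difference-quotient inequality obtained by discarding the singular increment (which has a favourable sign for $\Phi\geq 0$), and that yields only a one-sided bound on $\langle\zeta_n'(0),\Phi\rangle$: an upper bound when $\varphi_{u_n}''(1)<0$ (the $\mathcal{N}_\lambda^-$ case) but only a lower bound when $\varphi_{u_n}''(1)>0$ (the $\mathcal{N}_\lambda^+$ case). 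The upper bound in the $\mathcal{N}_\lambda^+$ case is precisely what Lemma \ref{l8} consumes, and the paper obtains it from the Ekeland inequality \ref{ek2} applied to $u=\zeta_n(t\Phi)(u_n+t\Phi)$, using the Nehari constraints of $u_n$ and of $\zeta_n(t\Phi)(u_n+t\Phi)$ to eliminate the singular integrals from $J_\lambda(u_n)-J_\lambda(\zeta_n(t\Phi)(u_n+t\Phi))$ before dividing by $t$. You never invoke \ref{ek2}, even though it is a hypothesis of the lemma, and your argument cannot substitute for it.

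Second, the two cases are not ``analogous'' at the level of the denominator. For $u_n\in\mathcal{N}_\lambda^+$, \ref{e10} together with \ref{e1} gives $\varphi_{u_n}''(1)\geq 2\left(ab(p+\alpha-1)(p\theta+\alpha-1)\right)^{1/2}\Vert u_n\Vert^{(p+p\theta)/2}-\lambda q(q+\alpha-1)\gamma C\Vert u_n\Vert^{q}$, which is positive for $\Vert u_n\Vert<\eta_\lambda$ but tends to $0$ as $\Vert u_n\Vert\to 0$; Lemma \ref{l3} supplies only the \emph{upper} bound $\Vert u_n\Vert<\eta_0$ there, so nothing in your argument prevents the denominator from degenerating along the sequence unless you separately show $\Vert u_n\Vert$ is bounded away from zero (for instance via $J_\lambda(u_n)\to m^+<0$ and the behaviour of $J_\lambda$ near $0$). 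Both points must be repaired before the proposal proves the stated two-sided uniform bound.
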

\begin{proof}
Let $ u_n \in \mathcal{N}_{\lambda}^{+},$ then we have
\begin{equation}\label{e08}
\left( a \Vert u_n \Vert^p + \Vert u_n \Vert_{p}^{p}\right) + b \Vert u_n \Vert^{p \theta}  - \int c(x)(u_n^+)^{-\alpha+1} - \lambda q \int F(x,u_n^+) = 0 
\end{equation}
and by lemma \ref{l4}, for $ \Phi \in X_0,$ where $ \Phi \geq 0,$ there exist a sequence of functions $ (\zeta_n)_n$ such that 
$ \zeta_n(t\Phi(u_n +t\Phi)) \in \mathcal{N}_{\lambda}^{+}$ and $ \zeta_n(0) =1.$ which yields
\begin{equation}\label{e09}
\begin{split}
\zeta_n^{p}(t\Phi) \left( a \Vert u_n +t\Phi \Vert^p + \Vert u_n +t\Phi \Vert_{p}^{p}\right) + b \zeta_n^{p\theta}(t\Phi) \Vert u_n +t\Phi \Vert^{p \theta}\\ 
-\zeta_n^{-\alpha+1}(t\Phi) \int c(x)((u_n +t\Phi)^+)^{-\alpha+1} - \lambda \zeta_n^{q}(t\Phi) q \int F(x,(u_n +t\Phi)^+) = 0 
\end{split}
\end{equation} 
Where $ \langle\zeta_n^{\prime}(0), \Phi\rangle \in \bar{\mathbb{R}},$ for any $ \Phi \in X_0,$ assuming that the right derivative of $ \zeta$ at $ t =0$ exists,
if not we take $ t_n >0$ such that $ t_n \xrightarrow{>} 0,$ and then we take $ \zeta_n^{\prime}(0)$ as $ \lim_{n \to \infty} s_n =\frac{\zeta_n(t_n\Phi) -1}{t_n}.$ Therefore by \ref{e08} and \ref{e09} we obtain
\begin{equation}
\begin{aligned}
&(\zeta_n^{p}(t\Phi) -1) \left(a\Vert u_n +t\Phi \Vert^p + \Vert u_n +t\Phi \Vert_{p}^{p}\right)+a\left(\Vert u_n +t\Phi \Vert^p -\Vert u_n \Vert^p\right)\\ 
& +\left(\Vert u_n +t\Phi \Vert_{p}^{p}-\Vert u_n \Vert_{p}^{p}\right) +b (\zeta_n^{p\theta}(t\Phi) -1) \Vert u_n +t\Phi \Vert^{p \theta}  +b\left(\Vert u_n +t\Phi \Vert^{p \theta} -\Vert u_n \Vert^{p \theta}\right)\\
&-(\zeta_n^{-\alpha+1}(t\Phi) -1)\int c(x)((u_n +t\Phi)^+)^{-\alpha+1}\\
&-\left(\int c(x)((u_n +t\Phi)^+)^{-\alpha+1} -\int c(x)\vert u_n\vert^{-\alpha+1}\right)\\
&-\lambda q (\zeta_n^{q}(t\Phi) -1) \int F(x,(u_n +t\Phi)^+)\\
& -\lambda q \left(\int F(x,(u_n +t\Phi)^+) -\int F(x,u_n^+)\right) =0\\
\end{aligned}
\end{equation} 
deviding by $ t >0$ and passing to the limit when $ t \xrightarrow{>} 0,$ and usign \ref{e08} we find,
\begin{equation}
\begin{aligned}
& 0\leq p\zeta^{p-1}_{n}(t\Phi)\langle \zeta^{\prime}_{n}(0), \Phi\rangle\left( a \Vert u_n \Vert^p + \Vert u_n \Vert_{p}^{p}\right)\\
&+pa\int_{\Omega}\frac{\vert u_n(x) -u_n(y)\vert^{p-2}(u_n(x) -u_n(y))(\Phi(x) -\Phi(y))}{\vert x -y\vert^{n  +ps}}dxdy \\
&+\int_{\Omega}(u_n^+)^{p -1}\Phi dx +b p\theta\zeta^{p\theta-1}_{n}(t\Phi)\langle \zeta^{\prime}_{n}(0), \Phi\rangle\Vert u_n \Vert^{p\theta} 
+b p\theta\Vert u_n\Vert^{p(\theta -1)}\\
&\int_{\Omega}\frac{\vert u_n(x) -u_n(y)\vert^{p-2}(u_n(x) -u_n(y))(\Phi(x) -\Phi(y))}{\vert x -y\vert^{n  +ps}}dxdy\\
&-(-\alpha +1)\zeta^{-\alpha}_{n}(t\Phi)\langle \zeta^{\prime}_{n}(0), \Phi\rangle\int c(x)(u_n^+)^{-\alpha+1}\\
& -(-\alpha +1)\int_{\Omega}c(x)(u_n^+)^{-\alpha}\Phi -\lambda q^{2}\zeta^{q-1}_{n}(t\Phi)\langle \zeta^{\prime}_{n}(0), \Phi\rangle\int_{\Omega}F(x,u_n^+)dx\\
&-\lambda q\int_{\Omega}f(x,u_n^+)\Phi dx\\
&= \langle \zeta^{\prime}_{n}(0), \Phi\rangle\\
&\left( (p +\alpha -1)\left( a \Vert u_n \Vert^p + \Vert u_n \Vert_{p}^{p}\right) +b(p\theta +\alpha -1) -\lambda q(q +\alpha -1)\int F(x,u_n^+)dx\right)\\
& +\left(a p +b p\theta\Vert u_n\Vert^{p(\theta -1)}\right)\int_{\Omega}\frac{\vert u_n(x) -u_n(y)\vert^{p-2}(u_n(x) -u_n(y))(\Phi(x) -\Phi(y))}{\vert x -y\vert^{n  +ps}}dxdy \\
&-\lambda q\int_{\Omega}f(x,u_n^+)\Phi dx +\int_{\Omega}(u_n^+)^{p -1}\Phi dx
\end{aligned}
\end{equation}
Since $ u_n$ is bounded in $ X_0$ and by lemma \ref{l6} there exist a constant 
$ C_1 >0$ and $ C_2 \in \mathbb{R}$ such that $ \langle \zeta^{\prime}_{n}(0) \Phi\rangle \geq \frac{C_2}{C_1},$ 
hence $ \langle \zeta^{\prime}_{n}(0) \Phi\rangle$ is bounded from bellow for any positive $ \Phi$ in $ X_0.$\newline
On the other hand using \ref{ek2} with $ u =\zeta_n(t\Phi)(u_n +t\Phi),$ we obtain
\begin{equation}
\begin{aligned}
& \vert (\zeta_n(t\Phi) -1)\frac{\Vert u_n\Vert}{n} +\zeta_n(t\Phi)\frac{\Vert t\Phi\Vert}{n} 
\geq J_{\lambda}(u_n) -J_{\lambda}(\zeta_n(t\Phi)(u_n +t\Phi))\\
& = \left( \frac{1}{p} -\frac{1}{-\alpha +1}\right)\left( a \Vert u_n \Vert^p + \Vert u_n \Vert_{p}^{p}\right)\\
& +b\left( \frac{1}{p\theta} -\frac{1}{-\alpha +1}\right)\Vert u_n \Vert^{p \theta}\\
& +\lambda\left( \frac{q +\alpha -1}{-\alpha +1}\right) \int F(x,u_n^+)\\
& +\left(\frac{1}{-\alpha +1} - \frac{1}{p}\right)\zeta_n^{p}(t\Phi) \left( a \Vert u_n +t\Phi \Vert^p + \Vert u_n +t\Phi \Vert_{p}^{p}\right)\\
& +b\left( \frac{1}{-\alpha +1} -\frac{1}{p\theta}\right)\zeta_n^{p\theta}(t\Phi) \Vert u_n +t\Phi \Vert^{p \theta}\\
& -\lambda \left(  \frac{q +\alpha -1}{-\alpha +1}\right)\zeta^{q}_n(t\Phi) \int F(x,(u_n +t\Phi)^+)dx\\
& =\left(\frac{1}{-\alpha +1} - \frac{1}{p}\right)\left(a (\Vert u_n +t\Phi \Vert^p - \Vert u_n \Vert^p) +\Vert u_n +t\Phi \Vert_{p}^{p} -\Vert u_n \Vert_{p}^{p}\right)\\
& -b\left( \frac{1}{p\theta} -\frac{1}{-\alpha +1}\right)\left(  \Vert u_n +t\Phi \Vert^{p \theta} -\Vert u_n \Vert^{p \theta}\right)\\
& +\left(\frac{1}{-\alpha +1} - \frac{1}{p}\right)(\zeta_n^{p}(t\Phi) -1)\left( a \Vert u_n +t\Phi \Vert^p + \Vert u_n +t\Phi \Vert_{p}^{p}\right)\\
& +b\left( \frac{1}{-\alpha +1} -\frac{1}{p\theta}\right)(\zeta_n^{p\theta}(t\Phi) -1)\Vert u_n +t\Phi \Vert^{p \theta}\\
& -\lambda\left(  \frac{q +\alpha -1}{-\alpha +1}\right)\left( \int F(x,(u_n +t\Phi)^+)dx -\int F(x,u_n^+)dx\right)\\
& -\lambda\left(  \frac{q +\alpha -1}{-\alpha +1}\right)(\zeta^{q}_n(t\Phi) -1)\int F(x,(u_n +t\Phi)^+)dx. \\
\end{aligned}
\end{equation}

deviding by $ t$ and passing to the limmit where $ t \xrightarrow{>} 0,$ we find
{\scriptsize
\begin{equation}
\begin{aligned}
& \langle\zeta^{\prime}_n(0),\Phi \rangle\frac{\Vert u_n\Vert}{n} +\frac{\Vert \Phi\Vert}{n} \geq \left(\frac{p +\alpha -1}{p(-\alpha +1)}\right)\\
&\left(p a\int_{\Omega}\frac{\vert u_n(x) -u_n(y)\vert^{p-2}(u_n(x) -u_n(y))(\Phi(x) -\Phi(y))}{\vert x -y\vert^{n  +ps}}dxdy +\int_{\Omega}(u_n^+)^{p -1}\Phi dx\right)\\
& +\left(\frac{p +\alpha -1}{-\alpha +1}\right)\langle \zeta^{\prime}_n(0),\Phi \rangle\left( a\Vert u_n\Vert^{p} +\Vert u_n\Vert^{p}_{p}\right)\\
& +b\left( \frac{p\theta +\alpha -1}{-\alpha +1}\right)\Vert u_n\Vert^{p(\theta -1)}\int_{\Omega}\frac{\vert u_n(x) -u_n(y)\vert^{p-2}( u_n(x) -u_n(y))(\Phi(x) -\Phi(y))}{\vert x -y\vert^{n  +ps}}dxdy\\
&+b\left( \frac{p\theta +\alpha -1}{-\alpha +1}\right)\langle \zeta^{\prime}_n(0),\Phi \rangle\Vert u\Vert^{p\theta }\\
& -\lambda\left(  \frac{q +\alpha -1}{-\alpha +1}\right)\int_{\Omega} f(x,u_n^+)\Phi dx\\
& -\lambda\left(  \frac{q +\alpha -1}{-\alpha +1}\right) q \langle \zeta^{\prime}_n(0),\Phi \rangle\int F(x,u_n^+)dx. \\
& =\frac{\langle \zeta^{\prime}_n(0),\Phi \rangle}{-\alpha +1}\\
&\left( (p +\alpha -1)\left( a\Vert u_n\Vert^{p} +\Vert u_n\Vert^{p}_{p}\right) +(p\theta +\alpha -1)\Vert u_n\Vert^{p\theta} -\lambda q(q +\alpha -1)\int F(x,u_n^+)dx.\right)\\
& +a\left(\frac{p +\alpha -1}{-\alpha +1}\right)\int_{\Omega}\frac{\vert u_n(x) -u_n(y)\vert^{p-2}(u_n(x) -u_n(y))(\Phi(x) -\Phi(y))}{\vert x -y\vert^{n  +ps}}dxdy
\\
& +\left(\frac{p +\alpha -1}{p(-\alpha +1)}\right)\int_{\Omega}(u_n^+)^{p -1}\Phi dx\\
& +b\left( \frac{p\theta +\alpha -1}{-\alpha +1}\right)\Vert u\Vert^{p(\theta -1)}\int_{\Omega}\frac{\vert u_n(x) -u_n(y)\vert^{p-2}(u_n(x) -u_n(y))(\Phi(x) -\Phi(y))}{\vert x -y\vert^{n  +ps}}dxdy\\
& -\lambda\left(  \frac{q +\alpha -1}{-\alpha +1}\right)\int_{\Omega} f(x,u_n^+)\Phi dx\\
\end{aligned}
\end{equation}
}
which means that
{\scriptsize
\begin{equation}
\begin{aligned}
&\frac{\Vert \Phi\Vert}{n} \geq \frac{\langle\zeta^{\prime}_n(0),\Phi \rangle}{-\alpha +1}\\
&\left( (p +\alpha -1)\left( a\Vert u_n\Vert^{p} +\Vert u_n\Vert^{p}_{p}\right) +(p\theta +\alpha -1)\Vert u\Vert^{p\theta} -\lambda q(q +\alpha -1)\int F(x,u_n^+)dx -\frac{(-\alpha +1)\Vert u_n\Vert}{n} \right)\\
& +a\left(\frac{p +\alpha -1}{-\alpha +1}\right)\int_{\Omega}\frac{\vert u_n(x) -u_n(y)\vert^{p-2}(u_n(x) -u_n(y))(\Phi(x) -\Phi(y))}{\vert x -y\vert^{n  +ps}}dxdy
\\
& +\left(\frac{p +\alpha -1}{p(-\alpha +1)}\right)\int_{\Omega}(u_n^+)^{p -1}\Phi dx\\
& +b\left( \frac{p\theta +\alpha -1}{-\alpha +1}\right)\Vert u_n\Vert^{p(\theta -1)}\int_{\Omega}\frac{\vert u_n(x) -u_n(y)\vert^{p-2}(u_n(x) -u_n(y))(\Phi(x) -\Phi(y))}{\vert x -y\vert^{n  +ps}}dxdy\\
& -\lambda\left(  \frac{q +\alpha -1}{-\alpha +1}\right)\int_{\Omega} f(x,u_n^+)\Phi dx\\
\end{aligned}
\end{equation}
}
By the boundedness of $ u_n$ in $ X_0,$ and lemma \ref{l6} we can choose $ n$ large enough such that
{\scriptsize
\begin{equation}\label{e010}
\begin{aligned}
&(p +\alpha -1)\left( a\Vert u_n\Vert^{p} +\Vert u_n\Vert^{p}_{p}\right) +(p\theta +\alpha -1)\Vert u\Vert^{p\theta} -\lambda q(q +\alpha -1)\int F(x,u_n^+)dx -\frac{(-\alpha +1)\Vert u_n\Vert}{n}\\
& = C_1 - \frac{(-\alpha +1)C_3}{n} >0.
\end{aligned}
\end{equation}
}
That is to say that there exist $ C_4 \in \mathbb{R}$  such that 
\begin{equation*}
\langle\zeta^{\prime}_n(0),\Phi \rangle \leq (-\alpha +1)\left( \frac{\Vert \Phi\Vert}{n} +C_4\right)\left( C_1 - \frac{(-\alpha +1)C_3}{n}\right)^{-1}.
\end{equation*}
Hence by \ref{e010} we get that  $ \langle\zeta^{\prime}_n(0),\Phi \rangle$ is bounded from above for $ \Phi\in X_0$ with $ \Phi \geq 0$ and $ n$ Large enough.
\end{proof}
\begin{lemma}\label{l8}
For $ \lambda < \lambda_{**}$ and $ (u_n)_n \in \mathcal{N}_{\lambda}^{+},$ (resp $ \mathcal{N}_{\lambda}^{-}$) 
satisfyng \ref{ek2} and converges weakly to $ u_*$ in $ X_0$ we have for any $ \Phi \in X_0$ and $ n \in \mathbb{N},$ 
that $ c(x)(u_n^+)^{-\alpha} \in L^{1}(\Omega),$ and moreover we have for any 
$ \Phi \in X_0$ 
\begin{equation}
\begin{aligned}
& \left(a +b\Vert u_n \Vert^{p(\theta -1)}\right) \int_{\mathbb{R}^{2n}} \frac{\vert u_n(x) - u_n(y) \vert^{p-2}(u_n(x) - u_n(y))(\phi(x) - \phi(y))}{\vert x-y \vert^{n + ps}}\\
& + \int_{\Omega}(u_n^+)^{p -1} \phi(x) dx  - \int_{\Omega} c(x) (u_n^+)^{-\alpha} \phi(x) - 
\lambda \int_{\Omega} f(x,u_n^+) \phi(x)dx = o(1).
\end{aligned}
\end{equation}
As $ n \to \infty.$
\end{lemma}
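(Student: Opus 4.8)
The goal is to show that the Ekeland minimizing sequence is an approximate weak solution even though $J_\lambda$ fails to be differentiable because of the singular term. Throughout, write $\langle\mathcal{L}u_n,\Phi\rangle$ for the fractional bilinear form $\int_{\mathbb{R}^{2n}}\frac{|u_n(x)-u_n(y)|^{p-2}(u_n(x)-u_n(y))(\Phi(x)-\Phi(y))}{|x-y|^{n+ps}}\,dx\,dy$. The plan is to fix a nonnegative test function $\Phi\in X_0$, $\Phi\ge 0$, and for small $t>0$ insert the admissible competitor $\zeta_n(t\Phi)(u_n+t\Phi)\in\mathcal{N}_\lambda^{+}$ (resp.\ $\mathcal{N}_\lambda^{-}$), furnished by Lemma \ref{l4}, into the Ekeland inequality (\ref{ek2}), which gives
\[
J_\lambda(u_n)-J_\lambda\!\left(\zeta_n(t\Phi)(u_n+t\Phi)\right)\le \frac{1}{n}\left\|\zeta_n(t\Phi)(u_n+t\Phi)-u_n\right\|.
\]

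Dividing by $t$ and letting $t\to 0^{+}$ is the computation already performed in Lemma \ref{l7}. The smooth parts converge to the directional derivatives $\left(a+b\|u_n\|^{p(\theta-1)}\right)\langle\mathcal{L}u_n,\Phi\rangle$, $\int_\Omega(u_n^+)^{p-1}\Phi$ and $\lambda\int_\Omega f(x,u_n^+)\Phi$. Each term carrying the factor $\langle\zeta_n'(0),\Phi\rangle$ is multiplied by the Nehari quantity
\[
(p+\alpha-1)\!\left(a\|u_n\|^p+\|u_n\|_p^p\right)+b(p\theta+\alpha-1)\|u_n\|^{p\theta}-\lambda q(q+\alpha-1)\!\int_\Omega F(x,u_n^+),
\]
whose sign is determined by Lemma \ref{l6}; combining this with the uniform bound on $\langle\zeta_n'(0),\Phi\rangle$ from Lemma \ref{l7} and the $1/n$ factors inherited from (\ref{ek2}), all these contributions are of order $1/n$ and thus vanish as $n\to\infty$.

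The delicate term is the singular one, where one cannot differentiate but must pass to the limit by Fatou's lemma. Since $\Phi\ge 0$ we have $u_n+t\Phi\ge u_n$, hence $(u_n+t\Phi)^+\ge u_n^+$ a.e., and as $s\mapsto s^{1-\alpha}$ is nondecreasing the difference quotient
\[
\frac{((u_n+t\Phi)^+)^{1-\alpha}-(u_n^+)^{1-\alpha}}{t(1-\alpha)}\ge 0
\]
is nonnegative and converges a.e.\ to $(u_n^+)^{-\alpha}\Phi$ as $t\to 0^{+}$. Fatou's lemma then yields
\[
\int_\Omega c(x)(u_n^+)^{-\alpha}\Phi\,dx\le \liminf_{t\to 0^{+}}\frac{1}{t(1-\alpha)}\int_\Omega c(x)\Big(((u_n+t\Phi)^+)^{1-\alpha}-(u_n^+)^{1-\alpha}\Big)dx,
\]
and since the right-hand side is finite (being a combination of the regular limits already controlled via (\ref{ek2})), this simultaneously shows that $c(x)(u_n^+)^{-\alpha}\Phi\in L^1(\Omega)$ and provides the singular contribution with the correct one-sided sign.

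Assembling the three limits produces, for every $\Phi\ge 0$,
\[
\left(a+b\|u_n\|^{p(\theta-1)}\right)\langle\mathcal{L}u_n,\Phi\rangle+\int_\Omega(u_n^+)^{p-1}\Phi-\int_\Omega c(x)(u_n^+)^{-\alpha}\Phi-\lambda\int_\Omega f(x,u_n^+)\Phi\ge -\frac{C}{n}\|\Phi\|.
\]
To upgrade this one-sided estimate into the stated identity $=o(1)$ for arbitrary $\Phi\in X_0$, I would test the constraint $u_n\in\mathcal{N}_\lambda$ against $u_n$, which makes the left-hand side vanish exactly at $\Phi=u_n$, and then insert the nonnegative perturbations $(u_n+\varepsilon\Phi)^{+}$ for $\Phi$ of arbitrary sign and let $\varepsilon\to 0^{+}$, so that the inequality valid on the positive cone symmetrizes into the full approximate weak formulation. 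The main obstacle is exactly this singular term: since $J_\lambda$ is only continuous, differentiation under the integral is unavailable, which is why the estimate is first obtained for $\Phi\ge 0$ through Fatou and only afterwards symmetrized.
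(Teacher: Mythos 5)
Your proposal follows essentially the same route as the paper: insert the competitor $\zeta_n(t\Phi)(u_n+t\Phi)$ from Lemma \ref{l4} into the Ekeland inequality \ref{ek2}, divide by $t$, control the $\langle\zeta_n^{\prime}(0),\Phi\rangle$ terms via Lemmas \ref{l6} and \ref{l7}, handle the singular term by Fatou's lemma to get both the $L^1$-integrability and the one-sided bound for $\Phi\geq 0$, and then symmetrize to arbitrary $\Phi$ by testing with $(u_n^{+}+\epsilon\Phi)^{+}$ together with the Nehari identity $\langle J_{\lambda}^{\prime}(u_n),u_n\rangle=0$ and the vanishing measure of $\{u_n^{+}+\epsilon\Phi\leq 0\}$. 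The final symmetrization step is only sketched in your write-up, but the mechanism you name is exactly the one the paper carries out in detail.
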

\begin{proof}
Let $ \Phi \in X_0$ with $ \Phi \geq 0,$ then
\begin{equation*}
\begin{aligned}
&(\zeta_n(t\Phi) -1)\frac{\Vert u_n\Vert}{n} +\zeta_n(t\Phi)\frac{\Vert t\Phi\Vert}{n} \geq J_{\lambda}(u_n) -J_{\lambda}(\zeta_n(t\Phi)(u_n +t\Phi))\\
& =-\frac{\zeta^{p}_n(t\Phi) -1}{p}\left(a\Vert u_n\Vert^{p} +\Vert u_n\Vert^{p}_{p}\right)\\
& -\frac{\zeta^{p}_n(t\Phi)}{p}\left(a(\Vert u_n +t\Phi\Vert^{p} -\Vert u_n\Vert^{p}) + \Vert u_n +t\Phi\Vert^{p}_{p} -\Vert u_n\Vert^{p}_{p}\right)\\
& -\frac{\zeta^{p\theta}_n(t\Phi) -1}{p\theta}\Vert u_n\Vert^{p\theta} -b\frac{\zeta^{p\theta}_n(t\Phi)}{p\theta}\left( \Vert u_n -t\Phi\Vert^{p\theta} -\Vert u_n\Vert^{p\theta}\right)\\
& +\frac{\zeta^{-\alpha +1}_n(t\Phi) -1}{-\alpha +1}\int_{\Omega} c(x)((u_n +t\Phi)^+)^{-\alpha +1}\\
&+\frac{1}{-\alpha +1}\int_{\Omega} c(x) 
\left(((u_n -t\Phi)^+)^{-\alpha +1} -(u_n^+)^{-\alpha +1}\right)dx\\
& +\lambda (\zeta^{q}_n(t\Phi) -1)\int_{\Omega}F(x,(u_n +t\Phi)^+) +\lambda\int_{\Omega}(F(x,(u_n +t\Phi)^+) -F(x, u_n^+)dx\\
\end{aligned}
\end{equation*}
Deviding by $ t$ and passing to the limit $ t \xrightarrow{>} 0$ we get 
\begin{equation*}
\begin{aligned}
& \langle\zeta^{\prime}_n(0), \Phi\rangle\frac{\Vert u_n\Vert}{n} +\frac{\Vert\Phi\Vert}{n} \geq\\
& -\langle\zeta^{\prime}_n(0), \Phi\rangle\left(a\Vert u_n\Vert^{p} +\Vert u_n\Vert^{p}_{p}\right)\\
& -a\int_{\Omega}\frac{\vert u_n(x) -u_n(y)\vert^{p -2}(u_n(x) -u_n(y))(\Phi(x) -\Phi(y))}{\vert x -y\vert^{n +ps}}dxdy -\int_{\Omega}(u_n^+)^{p-1}\Phi dx\\
& -\langle\zeta^{\prime}_n(0), \Phi\rangle\Vert u_n\Vert^{p\theta}\\
& -b\Vert u_n\Vert^{p(\theta -1)}\int_{\Omega}\frac{\vert u_n(x) -u_n(y)\vert^{p -2}(u_n(x) -u_n(y))(\Phi(x) -\Phi(y))}{\vert x -y\vert^{n +ps}}dxdy\\
& +\langle\zeta^{\prime}_n(0), \Phi\rangle\int_{\Omega}c(x)(u_n^+)^{-\alpha +1}dx\\\
& +\frac{1}{-\alpha +1}\liminf_{t \xrightarrow{>} 0}
\frac{\int_{\Omega} c(x) \left(((u_n -t\Phi)^+)^{-\alpha +1} -(u_n^+)^{-\alpha +1}\right)}{t}dx\\
& +\lambda q\langle\zeta^{\prime}_n(0), \Phi\rangle\int_{\Omega} F(x,u_n^+)dx +\lambda \int_{\Omega}f(x, u_n^+)\Phi dx\\
& = -\langle\zeta^{\prime}_n(0), \Phi\rangle\\
&\left( \left(a\Vert u_n\Vert^{p} +\Vert u_n\Vert^{p}_{p}\right) +\Vert u_n\Vert^{p\theta} +\int_{\Omega}c(x) ( u_n^+)^{-\alpha +1}dx +\lambda q \int_{\Omega} F(x,u_n^+)dx\right)\\
& -\left( a +b\Vert u_n\Vert^{p(\theta -1)}\right)\int_{\Omega}\frac{\vert u_n(x) -u_n(y)\vert^{p -1}(\Phi(x) -\Phi(y))}{\vert x -y\vert^{n +ps}}dxdy -\int_{\Omega}\vert u_n\vert^{p-1}\Phi dx\\
&+\frac{1}{-\alpha +1}\liminf_{t \xrightarrow{>} 0}\int_{\Omega} \frac{c(x) \left(\vert u_n -t\Phi\vert^{-\alpha +1} -\vert u_n\vert^{-\alpha +1}\right)}{t}dx ++\lambda \int_{\Omega}f(x, u_n)\Phi dx\\
& =-\left( a +b\Vert u_n\Vert^{p(\theta -1)}\right)\int_{\Omega}\frac{\vert u_n(x) -u_n(y)\vert^{p -2}(u_n(x) -u_n(y))(\Phi(x) -\Phi(y))}{\vert x -y\vert^{n +ps}}dxdy \\
&-\int_{\Omega}(u_n^+)^{p-1}\Phi dx +\frac{1}{-\alpha +1}\liminf_{t \xrightarrow{>} 0}\int_{\Omega} \frac{c(x) \left(((u_n -t\Phi)^+)^{-\alpha +1} -( u_n^+)^{-\alpha +1}\right)}{t}dx\\
& +\lambda \int_{\Omega}f(x, u_n^+)\Phi dx\\
\end{aligned}
\end{equation*}
Hence, by the boundedness of $ u_n$ in $ X_0$ and lemma \ref{l7} and \ref{e011}, we get 
\begin{equation*}
\liminf_{t \xrightarrow{>} 0}\int_{\Omega} \frac{c(x) \left(((u_n -t\Phi)^+)^{-\alpha +1} -(u_n^+)^{-\alpha +1}\right)}{t}dx <\infty.
\end{equation*}
Also Since $ c(x) \left(((u_n -t\Phi)^+)^{-\alpha +1} -(u_n^+)^{-\alpha +1}\right) \geq 0$ then by Fatous lemma and lemma \ref{l7} we have 
\begin{equation*}
\begin{aligned}
&\int_{\Omega} \liminf_{t \to 0^+}\frac{c(x) \left(((u_n -t\Phi)^+)^{-\alpha +1} -(u_n^+)^{-\alpha +1}\right)}{t}dx = \int c(x)(u_n^+)^{-\alpha}\Phi \\
&\leq \liminf_{t \to 0^+}\int_{\Omega} \frac{c(x) \left(((u_n -t\Phi)^+)^{-\alpha +1} -(u_n^+)^{-\alpha +1}\right)}{t}dx 
\end{aligned}
\end{equation*}
and there exist canstant $ C_5$ given by the boundedness of $ u_n$ and of $ \langle\zeta^{\prime}_n(0),\Phi \rangle$ such that
{\tiny
\begin{equation*}
\begin{aligned}
&\frac{1}{-\alpha +1}\int_{\Omega}c(x)(u_n^+)^{-\alpha}\Phi dx \leq \liminf_{t \to 0^+}\frac{1}{-\alpha +1}\int_{\Omega}\frac{c(x)\left(((u_n(x) +t\Phi)^+)^{-\alpha +1} -(u_n^+)^{-\alpha +1}\right)}{t}dx\\
& \leq \frac{\langle\zeta^{\prime}_n(0), \Phi\rangle\Vert u_n\Vert +\Vert\Phi\Vert}{n} +\left( a +b\Vert u_n\Vert^{p(\theta -1)}\right)\int_{\Omega}\frac{\vert u_n(x) -u_n(y)\vert^{p -2}(u_n(x) -u_n(y))(\Phi(x) -\Phi(y))}{\vert x -y\vert^{n +ps}}dxdy\\
& +\int_{\Omega}(u_n^+)^{p-1}\Phi dx -\lambda \int_{\Omega}f(x, u_n^+)\Phi dx\\
& \leq \frac{C_5 +\Vert \Phi\Vert}{n} +\left( a +b\Vert u_n\Vert^{p(\theta -1)}\right)\int_{\Omega}\frac{\vert u_n(x) -u_n(y)\vert^{p -2}(u_n(x) -u_n(y))(\Phi(x) -\Phi(y))}{\vert x -y\vert^{n +ps}}dxdy\\
& +\int_{\Omega}(u_n^+)^{p-1}\Phi dx -\lambda \int_{\Omega}f(x, u_n^+)\Phi dx\\
\end{aligned}
\end{equation*}
}
This and \ref{e02} yields that $ c(x)\vert u_n\vert^{-\alpha}$ is $ L^{1}-$integrable function. Also as $ n \to \infty.$ we get
\begin{equation*}
\begin{aligned}
&\left( a +b\Vert u_n\Vert^{p(\theta -1)}\right)\int_{\Omega}\frac{\vert u_n(x) -u_n(y)\vert^{p -2}(u_n(x) -u_n(y))(\Phi(x) -\Phi(y))}{\vert x -y\vert^{n +ps}}dxdy \\
& +\int_{\Omega}(u_n^+)^{p-1}\Phi dx-\int_{\Omega}c(x)(u_n^+)^{-\alpha}\Phi dx -\lambda \int_{\Omega}f(x, u_n^+)\Phi dx \geq o_n(1).
\end{aligned}
\end{equation*}
The last holds for every $ \Phi \in X_0$ for this let $ \Psi$ a test function such that $ \Psi^+ =(u_n^+ +\epsilon\Phi)^{+}, $ hence
\begin{equation}\label{e016}
\begin{aligned}
&o_n(1) \leq \left( a +b\Vert u_n\Vert^{p(\theta -1)}\right)\\
&\int_{\Omega}\frac{\vert u_n(x) -u_n(y)\vert^{p -2}(u_n(x) -u_n(y))(\Psi^{+}(x) -\Psi^{+}(y))}{\vert x -y\vert^{n +ps}}dxdy \\
&+\int_{\Omega}(u_n^+)^{p-1}\Psi^{+} dx -\int_{\Omega}c(x)(u_n^+)^{-\alpha}\Psi^{+} dx -\lambda \int_{\Omega}f(x, u_n^+)\Psi^{+} dx \\
&= \left( a +b\Vert u_n\Vert^{p(\theta -1)}\right)\\
&\int_{\Omega}\frac{\vert u_n(x) -u_n(y)\vert^{p -2}(u_n(x) -u_n(y))((\Psi +\Psi^{-})(x) -(\Psi +\Psi^{-})(y))}{\vert x -y\vert^{n +ps}}dxdy \\
&+\int_{\Omega}(u_n^+)^{p-1}(\Psi +\Psi^{-})(x) dx -\int_{\Omega}c(x)(u_n^+)^{-\alpha}(\Psi +\Psi^{-})(x) dx\\
& -\lambda \int_{\Omega}f(x, u_n^+)(\Psi +\Psi^{-})(x)dx \\
\end{aligned}
\end{equation}
And as in \cite{fiscella2019nehari} we have the following inequalities,
\begin{equation}\label{e013}
\begin{aligned}
&(u(x) -u(y))(u^-(x) -u^-(y)) \leq - \vert u^-(x) -u^-(y)\vert^{2}\\
&(u(x) -u(y))(u^+(x) -u^+(y)) \leq \vert u(x) -u(y)\vert^{2}
\end{aligned}
\end{equation}
therefore we get the following 
{\scriptsize
\begin{equation}\label{e014}
\begin{aligned}
&\int_{\Omega}\vert u_n\vert^{p-1}\Psi^{+} dx 
\leq \int_{\Omega}\vert u_n\vert^{p}dx +\epsilon\int_{\Omega}\vert u_n\vert^{p-1}\Phi dx -\int_{\Omega}\vert u_n\vert^{p-1}\Psi^{-}dx.\\
& \textit{and}\\
&\int_{\Omega}c(x)\vert u_n\vert^{-\alpha}(\Psi -\Psi^{-})(x) dx \leq \int_{\Omega}c(x)\vert u_n\vert^{-\alpha +1} +\epsilon\int_{\Omega}c(x)\vert u_n\vert^{-\alpha}\Phi(x) dx -\int_{\Omega}c(x)\vert u_n\vert^{-\alpha}\Psi^{-}(x)dx.\\
& \textit{and}\\
&\int_{\Omega}f(x, u_n)(\Psi -\Psi^{-})(x)dx \leq \int_{\Omega}f(x, u_n)u_n dx + \epsilon\int_{\Omega}f(x, u_n)\Phi(x)dx -\int_{\Omega}f(x, u_n)\Psi^{-}(x)dx.
\end{aligned}
\end{equation}
}
using \ref{e013} and \ref{e014} we find
\begin{equation}\label{e015}
\begin{aligned}
&\int_{\mathbb{R}^{2n}}\frac{\vert u_n(x) -u_n(y)\vert^{p -2}(u_n(x) -u_n(y))((\Psi +\Psi^{-})(x) -(\Psi +\Psi^{-})(y))}{\vert x -y\vert^{n +ps}}dxdy\\
&\leq \int_{\mathbb{R}^{2n}}\frac{\vert u_n(x) -u_n(y)\vert^{p}}{\vert x -y\vert^{n +ps}}dxdy \\
&+\epsilon\int_{\mathbb{R}^{2n}}\frac{\vert u_n(x) -u_n(y)\vert^{p -2}(u_n(x) -u_n(y))(\Phi(x) -\Phi(y))}{\vert x -y\vert^{n +ps}}dxdy\\
&+\int_{\mathbb{R}^{2n}}\frac{\vert u_n(x) -u_n(y)\vert^{p -2}(u_n(x) -u_n(y))(\Psi^{-}(x) -\Psi^{-}(y))}{\vert x -y\vert^{n +ps}}dxdy
\end{aligned}
\end{equation}
Hence, by \ref{e015} and \ref{e016} as $ n \to \infty$ we get
{\footnotesize
\begin{equation}
\begin{aligned}
& o_n(1) \leq \left( a +b\Vert u_n\Vert^{p(\theta -1)}\right)\Vert u_n\Vert^{p}\\
& +\epsilon\left( a +b\Vert u_n\Vert^{p(\theta -1)}\right)\int_{\mathbb{R}^{2n}}\frac{\vert u_n(x) -u_n(y)\vert^{p -2}(u_n(x) -u_n(y))(\Phi(x) -\Phi(y))}{\vert x -y\vert^{n +ps}}dxdy\\
& +\left( a +b\Vert u_n\Vert^{p(\theta -1)}\right)\int_{\mathbb{R}^{2n}}\frac{\vert u_n(x) -u_n(y)\vert^{p -2}(u_n(x) -u_n(y))(\Psi^{-}(x) -\Psi^{-}(y))}{\vert x -y\vert^{n +ps}}dxdy\\
& +\Vert u_n\Vert^{p}_{p} +\epsilon\int_{\Omega}(u_n^+){p-1}\Phi dx +\int_{\Omega}(u_n^+)^{p-1}\Psi^{-}dx\\
& -\int_{\Omega}c(x)(u_n^+)^{-\alpha}(\Psi +\Psi^{-})(x) dx -\lambda \int_{\Omega}f(x, u_n^+)(\Psi +\Psi^{-})(x)dx\\
& =\left( a +b\Vert u_n\Vert^{p(\theta -1)}\right)\Vert u_n\Vert^{p} +\Vert u_n\Vert^{p}_{p} -\int_{\Omega}c(x)(u_n^+)^{-\alpha +1}dx -\lambda \int_{\Omega}f(x, u_n^+)u_n^+ dx\\
& +\epsilon \left( \left( a +b\Vert u_n\Vert^{p(\theta -1)}\right)\int_{\mathbb{R}^{2n}}\frac{\vert u_n(x) -u_n(y)\vert^{p -2}(u_n(x) -u_n(y))(\Phi(x) -\Phi(y))}{\vert x -y\vert^{n +ps}}dxdy  \right)\\
&+ \epsilon\left(\int_{\Omega}(u_n^+)^{p-1}\Phi dx -\int_{\Omega}c(x)( u_n^+)^{-\alpha}\Phi(x) dx -\lambda \int_{\Omega}f(x, u_n^+)\Phi(x)dx \right)\\
& +\left( a +b\Vert u_n\Vert^{p(\theta -1)}\right)\int_{\mathbb{R}^{2n}}\frac{\vert u_n(x) -u_n(y)\vert^{p -2}(u_n(x) -u_n(y))(\Psi^{-}(x) -\Psi^{-}(y))}{\vert x -y\vert^{n +ps}}dxdy\\
& +\int_{\Omega}(u_n^+)^{p-1}\Psi^{-}dx +\int_{\{u_n^+ +t\Phi \leq 0\}}c(x)(u_n^+)^{-\alpha}(u_n^+ +t\Phi)(x)dx\\
&+\lambda\int_{\{u_n^+ +t\Phi \leq 0\}}f(x, u_n^+)(u_n^+ +t\Phi)(x)dx.
\end{aligned}
\end{equation}
}
we know that $ u_n \in \mathcal{N}_{\lambda}$ and $ c(x) >0,$ then as $ n \to \infty.$ we obtain
{\footnotesize
\begin{equation}\label{e020}
\begin{aligned}
& =\left( a +b\Vert u_n\Vert^{p(\theta -1)}\right)\Vert u_n\Vert^{p} +\Vert u_n\Vert^{p}_{p} -\int_{\Omega}c(x)(u_n^+)^{-\alpha +1}dx -\lambda \int_{\Omega}f(x, u_n^+)u_n^+ dx\\
& +\epsilon \left( \left( a +b\Vert u_n\Vert^{p(\theta -1)}\right)\int_{\mathbb{R}^{2n}}\frac{\vert u_n(x) -u_n(y)\vert^{p -2}(u_n(x) -u_n(y))(\Phi(x) -\Phi(y))}{\vert x -y\vert^{n +ps}}dxdy \right)\\
&+ \epsilon\left(\int_{\Omega}(u_n^+)^{p-1}\Phi dx -\int_{\Omega}c(x)(u_n^+)^{-\alpha}\Phi(x) dx -\lambda \int_{\Omega}f(x, u_n^+)\Phi(x)dx \right)\\
& +\left( a +b\Vert u_n\Vert^{p(\theta -1)}\right)\int_{\mathbb{R}^{2n}}\frac{\vert u_n(x) -u_n(y)\vert^{p -2}(u_n(x) -u_n(y))(\Psi^{-}(x) -\Psi^{-}(y))}{\vert x -y\vert^{n +ps}}dxdy \\
& +\int_{\Omega}(u_n^+)^{p-1}\Psi^{-}dx +\lambda\int_{\{u_n^+ +\epsilon\Phi \leq 0\}}f(x, u_n^+)(u_n^+ +\epsilon\Phi)(x)dx.
\end{aligned}
\end{equation}
}
On the other hand we have
{\tiny
\begin{equation}\label{e40}
\begin{aligned}
&\mathbb{R}^{2n} \\
&=\left( (\mathbb{R}^n \setminus \{x \in \mathbb{R}^n : \Psi(x) \leq 0\})\cup\{x \in \mathbb{R}^n : \Psi(x) \leq 0\} \right)\times\left(  (\mathbb{R}^n \setminus \{x \in \mathbb{R}^n : \Psi(x) \leq 0\})\cup\{x \in \mathbb{R}^n : \Psi(x) \leq 0\}\right)\\
&=\{x \in \mathbb{R}^n : \Psi(x) \leq 0\}\times \{x \in \mathbb{R}^n : \Psi(x) \leq 0\}\cup(\mathbb{R}^n \setminus \{x \in \mathbb{R}^n : \Psi(x) \leq 0\})\times\{x \in \mathbb{R}^n : \Psi(x) \leq 0\}\cup \\
&\{x \in \mathbb{R}^n : \Psi(x) \leq 0\}\times(\mathbb{R}^n \setminus \{x \in \mathbb{R}^n : \Psi(x) \leq 0\})\cup(\mathbb{R}^n \setminus \{x \in \mathbb{R}^n \Psi(x) \leq 0\})\times (\mathbb{R}^n \setminus \{x \in \mathbb{R}^n : \Psi(x) \leq 0\})
\end{aligned}
\end{equation}
}
Then by \ref{e40} and \ref{e013} we find
\begin{equation}
\begin{aligned}
&\int_{\mathbb{R}^{2n}}\frac{\vert u_n(x) -u_n(y)\vert^{p -2}(u_n(x) -u_n(y))(\Psi^{-}(x) -\Psi^{-}(y))}{\vert x -y\vert^{n +ps}}dxdy \\
& \leq 2\epsilon\int_{\{ x \in \mathbb{R}^n : \Psi(x) \leq 0\}\times \mathbb{R}^n}\frac{\vert u_n(x) -u_n(y)\vert^{p -1}\vert\Phi(x) -\Phi(y)\vert}{\vert x -y\vert^{n +ps}}dxdy \\
\end{aligned}
\end{equation}
by Holder inequality we obtain 
{\tiny
\begin{equation}
\int_{\{ x \in \mathbb{R}^n : \Psi(x) \leq 0\}\times \mathbb{R}^n}\frac{\vert u(x) -u(y)\vert^{p -1}\vert \Phi(x) -\Phi
(y)\vert}{\vert u(x) -u(y)\vert^{p\frac{n +ps}{p}}}dxdy\leq C_6 \left( \int_{\{ x \in \mathbb{R}^n : \Psi(x) \leq 0\}\times \mathbb{R}^n}\frac{\vert \Phi(x) -\Phi(y)\vert^{p}}{\vert x -y\vert^{n +ps}}\right)^{\frac{1}{p}}
\end{equation}
}
and we can see that $ \frac{\vert\Phi(x) -\Phi(y)\vert}{\vert x -y\vert^{\frac{n +ps}{p}}} \in L^{p}(\mathbb{R}^{2n})$ and $ \Phi$ has compact supp So there exist $ r >0$ large enough such that $ supp(\Phi) \subset B_{r}.$ So $ \vert \{ x \in \mathbb{R}^n : \Psi(x) \leq 0\} \times B_{r}\vert \longrightarrow 0$ as $ \epsilon \xrightarrow{>} 0$
therefore
\begin{equation}\label{e017}
\lim_{\epsilon \xrightarrow{>} 0}\int_{(\{ x \in \mathbb{R}^n : u^+(x) +\epsilon \Phi(x) \leq 0\})\times(\mathbb{R}^n  \setminus B_{r})}\frac{\vert \Phi(x) -\Phi(y)\vert^{p}}{\vert x -y\vert^{n +ps}}dxdy =0,
\end{equation}
and we have by Holder inequality
\begin{equation*}
\begin{aligned}
&\left\vert \int_{\{u_n^+ +\epsilon\Phi \leq 0\}}f(x, u_n)(u_n^+ +\epsilon\Phi)(x)dx\right\vert \\
&\leq \gamma \Vert u_n\Vert_{q}^{q}\int_{\{u_n^+ +\epsilon\Phi \leq 0\}}1dx +C_{7}\epsilon\int_{\{u_n^+ +\epsilon\Phi \leq 0\}}\vert\Phi(x)\vert^{p_s^*}dx
\end{aligned}
\end{equation*}
Since $ \{u_n^+ +\epsilon\Phi \leq 0\} \longrightarrow 0$ as $ \epsilon \xrightarrow{>} 0,$ we can conclude that
\begin{equation}\label{e018}
\begin{aligned}
&\lim_{\epsilon  \to 0}\frac{1}{\epsilon}\int_{\{u_n^+ +t\Phi \leq 0\}}f(x, u_n)(u_n^+ +\epsilon\Phi)(x)dx =0.
\end{aligned}
\end{equation}
Also we have 
\begin{equation*}
\int_{\Omega}(u_n^+)^{p-1}\Psi^{-}dx  \leq \epsilon \int_{\{u_n^+ +\epsilon\Phi \leq 0\}}(u_n^+)^{p-1}\vert\Phi(x)\vert dx
\end{equation*}
Since $ \{u_n^+ +\epsilon\Phi \leq 0\} \longrightarrow 0$ as $ \epsilon \xrightarrow{>} 0,$ we can conclude that 
\begin{equation}\label{e019}
\lim_{\epsilon \to 0}\frac{1}{\epsilon}\int_{\Omega}(u_n^+)^{p-1}\Psi^{-}dx =0.
\end{equation}
Using \ref{e017}, \ref{e018}, \ref{e019} in \ref{e020} we find
\footnotesize
\begin{equation*}
\begin{aligned}
& =\left( a +b\Vert u_n\Vert^{p(\theta -1)}\right)\Vert u_n\Vert^{p} +\Vert u_n\Vert^{p}_{p} -\int_{\Omega}c(x)(u_n^+)^{-\alpha +1}dx -\lambda \int_{\Omega}f(x, u_n^+)u_n^+ dx\\
& +\epsilon \left( \left( a +b\Vert u_n\Vert^{p(\theta -1)}\right)\int_{\mathbb{R}^{2n}}\frac{\vert u_n(x) -u_n(y)\vert^{p -2}(u_n(x) -u_n(y))(\Phi(x) -\Phi(y))}{\vert x -y\vert^{n +ps}}dxdy \right)\\
&+ \epsilon\left(\int_{\Omega}(u_n^+)^{p-1}\Phi dx -\int_{\Omega}c(x)(u_n^+)^{-\alpha}\Phi(x) dx -\lambda \int_{\Omega}f(x, u_n^+)\Phi(x)dx \right)\\
& +\left( a +b\Vert u_n\Vert^{p(\theta -1)}\right)\int_{\mathbb{R}^{2n}}\frac{\vert u_n(x) -u_n(y)\vert^{p -2}(u_n(x) -u_n(y))(\Psi^{-}(x) -\Psi^{-}(y))}{\vert x -y\vert^{n +ps}}dxdy \\
& +\int_{\Omega}(u_n^+)^{p-1}\Psi^{-}dx +\lambda\int_{\{u_n^+ +\epsilon\Phi \leq 0\}}f(x, u_n^+)(u_n^+ +\epsilon\Phi)(x)dx.
\end{aligned}
\end{equation*}
as $ \epsilon \xrightarrow{>} 0$ for all $ \Phi \in X_0$
\begin{equation}
\begin{split}
o_n(1) \leq \left( a +b\Vert u_n\Vert^{p(\theta -1)}\right)\int_{\mathbb{R}^{2n}}\frac{\vert u_n(x) -u_n(y)\vert^{p -2}(u_n(x) -u_n(y))(\Phi(x) -\Phi(y))}{\vert x -y\vert^{n +ps}}dxdy \\
+\int_{\Omega}(u_n^+)^{p-1}\Phi dx -\int_{\Omega}c(x)(u_n^+)^{-\alpha}\Phi(x) dx -\lambda \int_{\Omega}f(x, u_n^+)\Phi(x)dx 
\end{split}
\end{equation}
\end{proof}
\section{Compactness properties}
\begin{proposition}\label{pr1}
Let $\lambda \in (0, \lambda_*),$ and $ (u_n)_n \in \mathcal{N}_{\lambda}$ where $ u_n \rightharpoonup u_*$ weakly in $ X_0$ and satifing \ref{ek2} and
$ J_{\lambda}(u_n) \longrightarrow c < c_{\lambda}$ as $ n \to \infty.$ with 
{\scriptsize
\begin{equation}\label{lev}
\begin{aligned}
& c_{\lambda} =\frac{s}{n} S_{p}^{\frac{n}{s p}}a^{\frac{n}{sp}} \gamma^{-\frac{n}{sp_s^*}}(p_s^* \lambda)^{-\frac{n}{s p_s^*}} -\left(\frac{p_s^* +\alpha -1}{p_s^*}\Vert u\Vert_{\infty}\vert \Omega\vert^{\frac{p_s^* +\alpha -1}{p_s^*}}S_{p}^{\frac{\alpha -1}{p}}\right)^{\frac{p\theta}{p\theta +\alpha -1}}\\
& \left( b\frac{p_s^* -p\theta}{p_s^*p\theta}\right)^{\frac{\alpha -1}{p\theta +\alpha -1}}\left( \frac{\alpha}{1 -\alpha}\right)
 \end{aligned}
\end{equation}
}
Then, the sequence $ (u_n)_n$ has a strongly convergent subseqence to $ u_*$ in $ X_0.$ 
\end{proposition}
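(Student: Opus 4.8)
The plan is to run a concentration--compactness argument below the threshold $c_\lambda$. First I would record that $(u_n)_n$ is bounded in $X_0$, which is immediate from the coercivity of $J_\lambda$ on $\mathcal{N}_\lambda$ proved in Lemma \ref{l5}. Hence, up to the subsequence already named, $u_n \rightharpoonup u_*$ weakly in $X_0$, $u_n \to u_*$ strongly in $L^{\eta}(\Omega)$ for every $\eta \in [1, p_s^*)$, and $u_n \to u_*$ a.e.\ in $\Omega$, exactly as listed after \ref{slc}. This strong subcritical convergence disposes of every lower-order term: the singular integral $\int c(x)(u_n^+)^{1-\alpha}$ and, since $1-\alpha < 1 < p$, the term $\int (u_n^+)^p$ pass to their limits, while the singular weak term $\int c(x)(u_n^+)^{-\alpha}\Phi$ is controlled by Fatou's lemma precisely as in the proof of Lemma \ref{l8}. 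The only quantity that can fail to converge is the critical term $\int F(x,u_n^+)$, because the embedding $X_0 \hookrightarrow L^{p_s^*}(\Omega)$ is not compact.

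Next I would pass to the limit in the approximate Euler--Lagrange inequality furnished by Lemma \ref{l8}, together with \ref{ek2}, to conclude that $u_*$ is a weak solution of \ref{p} in the sense of \ref{weak}. Setting $v_n := u_n - u_*$, so that $v_n \rightharpoonup 0$, I would then invoke the Brezis--Lieb lemma in three places: for the Gagliardo seminorm, $\Vert u_n\Vert^p = \Vert u_*\Vert^p + \Vert v_n\Vert^p + o(1)$; for the critical Lebesgue norm, $\Vert u_n^+\Vert_{p_s^*}^{p_s^*} = \Vert u_*^+\Vert_{p_s^*}^{p_s^*} + \Vert v_n^+\Vert_{p_s^*}^{p_s^*} + o(1)$; and, using that $F(x,\cdot)$ is homogeneous of order $q=p_s^*$ with the growth bound \ref{e1}, $\int F(x,u_n^+) = \int F(x,u_*^+) + \int F(x,v_n^+) + o(1)$.

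Write $\Vert v_n\Vert^p \to \ell \geq 0$ and suppose, for contradiction, that $\ell > 0$. Subtracting the equation solved by $u_*$ from the membership relation $u_n \in \mathcal{N}_\lambda$ and inserting the three splittings, the subcritical and singular contributions cancel in the limit, leaving a balance between the concentrating Dirichlet energy and the concentrating critical energy; combined with the Sobolev inequality $\Vert v_n\Vert^p \geq S_p \Vert v_n^+\Vert_{p_s^*}^{p}$ from \ref{e3} and the elementary bound $a + b\Vert u_n\Vert^{p(\theta-1)} \geq a$ on the Kirchhoff factor, this forces $\ell$ to exceed an explicit threshold expressed through $S_p$, $a$, $\gamma$, $\lambda$ and $p_s^*$. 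I would then expand $J_\lambda(u_n) = J_\lambda(u_*) + [\text{concentration energy}] + o(1)$: the concentration energy is bounded below by $\frac{s}{n} S_p^{\frac{n}{sp}} a^{\frac{n}{sp}} \gamma^{-\frac{n}{sp_s^*}}(p_s^*\lambda)^{-\frac{n}{sp_s^*}}$, while $J_\lambda(u_*)$ is bounded below by the negative quantity forming the second line of \ref{lev}, obtained by minimizing the singular contribution exactly as in Lemma \ref{l5}. Adding these two bounds yields $c = \lim_{n\to\infty} J_\lambda(u_n) \geq c_\lambda$, contradicting the hypothesis $c < c_\lambda$.

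Consequently $\ell = 0$, that is $\Vert v_n\Vert \to 0$, which is exactly the strong convergence $u_n \to u_*$ in $X_0$. The delicate point, and the step I expect to be the main obstacle, is the concentration estimate: because the Kirchhoff factor depends on the full, non-converging norm $\Vert u_n\Vert$ and multiplies the nonlocal operator, the Brezis--Lieb bookkeeping has to be carried out simultaneously for the Dirichlet energy and for $a + b\Vert u_n\Vert^{p(\theta-1)}$, and the quantized lower bound on $\ell$ must be matched term by term against the two pieces of $c_\lambda$. Keeping the singular term under control with only a.e.\ convergence (via Fatou, as in Lemma \ref{l8}), so that it does not disturb this balance, is the remaining technical hurdle.
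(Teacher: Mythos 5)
Your proposal follows essentially the same route as the paper's proof: boundedness from coercivity, disposal of the subcritical and singular terms by strong $L^\eta$ convergence, dominated convergence and Fatou, a Brezis--Lieb splitting of the norm and of the critical term, testing the approximate equation of Lemma \ref{l8} with $u_n-u_*$, and a Sobolev quantization of the concentration level $d=\lim\int F(x,u_n-u_*)$ that forces $c\geq c_\lambda$ when $d>0$. The only cosmetic difference is that the paper bounds $J_\lambda(u_n)-\frac{1}{p_s^*}\langle J_\lambda'(u_n),u_n\rangle$ directly through the Nehari constraint and the auxiliary function $\xi$ (working with $\mu=\lim\Vert u_n\Vert$ rather than splitting $J_\lambda(u_n)=J_\lambda(u_*)+\text{concentration}+o(1)$), which sidesteps the Brezis--Lieb bookkeeping for the nonlinear Kirchhoff term $\Vert u_n\Vert^{p\theta}$ that you flag as the main technical hurdle.
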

\begin{proof}
Let $ (u_n)\in \mathcal{N}_{\lambda},$ where $ u_n \rightharpoonup u_*,$ in $ X_0$ hence $ u_n$ and $ u_n^-$ is bounded in $ X_0.$ then by lemma \ref{l8}
\begin{equation*}
 (a +b\Vert u_n \Vert^{p(\theta -1)}) \int_{\mathbb{R}^{2n}} \frac{\vert u_n(x) - u_n(y) \vert^{p-2}(u_n(x) - u_n(y))(u_n^-(x) - u_n^-(y))}{\vert x-y \vert^{n + ps}} \longrightarrow 0.
\end{equation*}
By \ref{e013} we obtain
\begin{equation*}
-\lim_{n \to \infty}(a +b\Vert u_n \Vert^{p(\theta -1)}) \int_{\mathbb{R}^{2n}}\frac{\vert u_n^-(x) - u_n^-(y) \vert^{p}}{\vert x-y \vert^{n + ps}} \geq 0.
\end{equation*}
which means that $ \Vert u_n^-\Vert \longrightarrow 0,$ as $ n \to \infty.$ Therefore we sppose that $ u_n$ is sequence of positive functions,  hence by \cite{servadei2012mountain} and \cite{brezis2011functional} theorem 4.9, there exist subsequence denoted $ (u_n)_n$ where
\begin{equation}\label{e021}
\begin{aligned}
& u_n \rightharpoonup u_* \quad \textit{in}   \quad L^{p_s^*}(\Omega),?? X_0,\\
& u_n \longrightarrow u_* \quad \textit{in}   \quad L^{q}(\Omega) \,\, \textit{for any}\,\, q \in [1,p_s^*),\\
& u_n \longrightarrow u_* \quad \textit{a.e.} \quad \Omega, 
\end{aligned}
\end{equation}
In particular there exist $ h\in L^{q}(\Omega),$ such that $ u_n(x) \leq h(x)$ for any $ n \in \mathbb{N}$ and for a.e. $ x \in \Omega.$ we specify 
that \ref{e021} are satified even in $ \mathbb{R}^n,$ by taking $ u_n =0$ and $ h(x) =0$ a.e. in $ \mathbb{R}^n \setminus \Omega.$ Moreover by \ref{e021}
there exist $ \mu \geq 0,$ such that $ \Vert u_n\Vert \longrightarrow \mu,$ assuming that $ \mu >0,$ hence by \ref{e021}, \cite{brezis1983relation}
and by Brezis-lieb lemma [\cite{brezislieb} theorem 2] we have as $ n \to \infty.$
\begin{equation}
\begin{aligned}
 \Vert u_n\Vert^{p} &=\Vert u_n -u_*\Vert^{p} +\Vert u_*\Vert^{p} +o(1),\\
 \int_{\Omega}F(x,u_n)dx &=\int_{\Omega}F(x,u_n -u_*)dx +\int_{\Omega}F(x,u_*) +o(1).
\end{aligned}
\end{equation}
Hence from lemma \ref{l8} and \ref{Euler} we get that with $ \Phi = u_n -u_*$ and by \ref{e021} we get that $ \int_{\Omega}  u_n^{p-1}(u_n -u_*)(x) dx \longrightarrow 0.$ Then
\begin{equation*}
\begin{aligned}
o(1) &=(a +b\Vert u_n \Vert^{p(\theta -1)})\\
&\int_{\mathbb{R}^{2n}} \frac{\vert u_n(x) - u_n(y) \vert^{p-2}(u_n(x) - u_n(y))((u_n -u_*)(x) - (u_n -u_*))(y))}{\vert x-y \vert^{n + ps}}\\
& +\int_{\Omega}  u_n^{p-1}(u_n -u_*)(x) dx -\int_{\Omega} c(x)u_n^{-\alpha} (u_n -u_*)(x)dx\\
& -\lambda \int_{\Omega} f(x,u_n) (u_n -u_*)(x)dx\\
& =(a +b\mu^{p(\theta -1)})(\mu^{p} -\Vert u_*\Vert^{p}) -\int_{\Omega} c(x)u_n^{-\alpha} (u_n -u_*)(x)dx\\
&-\lambda \int_{\Omega} f(x,u_n)u_n dx +\lambda \int_{\Omega} f(x,u_n)u_*dx\\
& =(a +b\mu^{p(\theta -1)})(\mu^{p} -\Vert u_*\Vert^{p}) -\int_{\Omega} c(x)u_n^{-\alpha} (u_n -u_*)(x)dx\\
& -\lambda p_s^*\int_{\Omega} F(x,u_n) dx +\lambda \int_{\Omega} f(x,u_*)u_*dx\\
& =(a +b\mu^{p(\theta -1)})(\mu^{p} -\Vert u_*\Vert^{p}) -\int_{\Omega} c(x)u_n^{-\alpha} (u_n -u_*)(x)dx \\
&-\lambda p_s^*\left(\int_{\Omega} F(x,u_n) -F(x,u_*)dx\right)\\
& =(a +b\mu^{p(\theta -1)})(\Vert u_n -u_*\Vert^{p}) -\int_{\Omega} c(x)u_n^{-\alpha} (u_n -u_*)(x)dx \\
&-\lambda p_s^* \int_{\Omega} F(x,u_n -u_*)dx +o(1)\\
\end{aligned}
\end{equation*}
Then 
{\footnotesize
\begin{equation}\label{e030}
\lim_{n \to \infty}(a +b\mu^{p(\theta -1)})\Vert u_n -u_*\Vert^{p} =\lim_{n \to \infty}\left( \int_{\Omega} c(x)u_n^{-\alpha} (u_n -u_*)(x)dx +\lambda p_s^* \int_{\Omega} F(x,u_n -u_*)dx\right)
\end{equation}
}
Put 
\begin{equation}
\lim_{n \to \infty} \int_{\Omega} F(x,u_n -u_*)dx =d.
\end{equation}
By \ref{e021}, \ref{e02} and the the dominated comvergence theorem we have
\begin{equation}
\lim_{n \to \infty}\int_{\Omega} c(x)u_n^{-\alpha +1}dx =\int_{\Omega} c(x)u_*^{-\alpha +1}dx.
\end{equation}
Moreover for all $ n \in \mathbb{N}$ we have $ c(x)u_n^{-\alpha +1} \in L^{1}(\Omega),$ hence by Fatous lemma we obtain
\begin{equation}
\int_{\Omega} c(x)u_*^{-\alpha +1}dx \leq \lim_{n \to \infty}\inf\int_{\Omega} c(x)u_n^{-\alpha +1}dx.
\end{equation}
hence by \ref{e030} we obtain
\begin{equation}
\lim_{n \to \infty}(a +b\mu^{p(\theta -1)})\Vert u_n -u_*\Vert^{p} \leq \lambda p_s^* d
\end{equation}
If $ d =0,$ the proof is complete, assume the opposite that is $ d >0,$ then by the Sobolev fractional constant \ref{e3} and \ref{e1} we obtain
\begin{equation}
S_{p}d^{\frac{p}{p_s^*}}\gamma^{-\frac{p}{p_s^*}} \leq \Vert u_n -u_*\Vert^{p}.
\end{equation}
then we have as $ n \to \infty$ 
\begin{equation}
(\lambda^{-1} p_s^*) S_{p} \gamma^{-\frac{p}{p_s^*}}(a +b\mu^{p(\theta -1)}) \leq d^{\frac{p_s^* -p}{p^s_*}}.
\end{equation}
Also
\begin{equation}
(a +b\mu^{p(\theta -1)})(\mu^{p} -\Vert u_*\Vert^{p}) \leq \lambda p_s^* d
\end{equation}
then we get 
\begin{equation*}
\begin{aligned}
 & d \geq (\lambda p_s^*)^{-1} (a +b\mu^{p(\theta -1)})(\mu^{p} -\Vert u_*\Vert^{p})\\
 & d^{\frac{p_s^* -p}{p}} \geq (\lambda p_s^*)^{-\frac{p_s^* -p}{p}} (a +b\mu^{p(\theta -1)})^{\frac{p_s^* -p}{p}}(\mu^{p} -\Vert u_*\Vert^{p})^{\frac{p_s^* -p}{p}}\\
 & =(\lambda p_s^*)^{-\frac{p_s^* -p}{p}} (a +b\mu^{p(\theta -1)})^{\frac{p_s^* -p}{p}}\left( \lim_{n \to \infty }\Vert u_n -u_*\Vert^{p}\right)^{\frac{p_s^* -p}{p}}\\
 & \geq (\lambda p_s^*)^{-\frac{p_s^* -p}{p}} (a +b\mu^{p(\theta -1)})^{\frac{p_s^* -p}{p}}\left( S_{p}d^{\frac{p}{p_s^*}}\gamma^{-\frac{p}{p_s^*}}\right)^{\frac{p_s^* -p}{p}}\\
 & =(\lambda p_s^*)^{-\frac{p_s^* -p}{p}}(a +b\mu^{p(\theta -1)})^{\frac{p_s^* -p}{p}}S_{p}^{\frac{s p_s^*}{n}} d^{\frac{p_s^* -p}{p^*_s}}\gamma^{-\frac{p_s^* -p}{p^*_s}}\\
 & \geq (\lambda p_s^*)^{-\frac{p_s^* -p}{p}}(a +b\mu^{p(\theta -1)})^{\frac{p_s^* -p}{p}}S_{p}^{\frac{s p_s^*}{n}}\gamma^{-\frac{p_s^* -p}{p^*_s}}\left( \lambda^{-1} S_{p} \gamma^{-\frac{p}{p_s^*}}(a +b\mu^{p(\theta -1)})\right)\\
 & = (\lambda p_s^*)^{-\frac{p_s^*}{p}}(a +b\mu^{p(\theta -1)})^{\frac{p_s^*}{p}}S_{p}^{\frac{p_s^*}{p}}\gamma^{-1}
 \end{aligned}
\end{equation*}
then we get
\begin{equation*}
\left(\mu^{p}\right)^{\frac{p_s^* -p}{p}} \geq (\mu^{p} -\Vert u_*\Vert^{p})^{\frac{p_s^* -p}{p}} \geq (\lambda p_s^*)^{-1}(a +b\mu^{p(\theta -1)})S_{p}^{\frac{p_s^* }{p}}\gamma^{-1}
\end{equation*}
that is to say
\begin{equation}\label{e031}
\mu^{p} \geq \left(\frac{a +b\mu^{p(\theta -1)}}{\gamma \lambda p_s^*}\right)^{\frac{p}{p_s^* -p}}S_{p}^{\frac{n}{s p}}
\end{equation}
Then we get from \ref{j} and \ref{el1} for $ n \in \mathbb{N}$ we get
\begin{equation*}
\begin{aligned}
J_{\lambda}(u_n) -\frac{1}{p_s^*}\langle J^{\prime}_{\lambda}(u_n), u_n\rangle & =\left( \frac{1}{p} -\frac{1}{p_s^*}\right)\left(a \Vert u_n\Vert^p + \Vert u_n \Vert^{p}_{p}\right) +b\left( \frac{1}{p\theta} -\frac{1}{p_s^*}\right)\Vert u_n \Vert^{p \theta} \\
& -\left( \frac{1}{-\alpha +1} -\frac{1}{p_s^*}\right)\int c(x) u_n^{-\alpha+1}dx\\
&\geq \left( \frac{1}{p} -\frac{1}{p_s^*}\right)\left(a \Vert u_n\Vert^p + \Vert u_n \Vert^{p}_{p}\right) +b\left( \frac{1}{p\theta} -\frac{1}{p_s^*}\right)\Vert u_n \Vert^{p \theta} \\
& -\left( \frac{1}{-\alpha +1} -\frac{1}{p_s^*}\right)\Vert c \Vert_{L^{\infty}}\vert \Omega\vert^{\frac{p_s^* +\alpha -1}{p_s^*}}S_{p}^{\frac{\alpha-1}{p}} \Vert u \Vert^{-\alpha+1}\\
&\geq a\left( \frac{1}{p} -\frac{1}{p_s^*}\right)\Vert u_n\Vert^p +b\left( \frac{1}{p\theta} -\frac{1}{p_s^*}\right)\Vert u_n \Vert^{p \theta} \\
& -\left( \frac{1}{-\alpha +1} -\frac{1}{p_s^*}\right)\Vert c \Vert_{L^{\infty}}\vert \Omega\vert^{\frac{p_s^* +\alpha -1}{p_s^*}}S_{p}^{\frac{\alpha-1}{p}} \Vert u \Vert^{-\alpha+1}\\
\end{aligned}
\end{equation*}
Put 
\begin{equation*}
\xi(t) =b\left( \frac{1}{p\theta} -\frac{1}{p_s^*}\right)t^{p \theta} 
 -\left( \frac{1}{-\alpha +1} -\frac{1}{p_s^*}\right)\Vert c \Vert_{L^{\infty}}\vert \Omega\vert^{\frac{p_s^* +\alpha -1}{p_s^*}}S_{p}^{\frac{\alpha-1}{p}} t^{-\alpha+1}
\end{equation*}
$ \xi$ attains its minimum at $ t_{min}$ where
\begin{equation*}
t_{min} =\left( \frac{\frac{p_s^* +\alpha -1}{p_s^*}\Vert c \Vert_{L^{\infty}}\vert \Omega\vert^{\frac{p_s^* +\alpha -1}{p_s^*}}S_{p}^{\frac{\alpha-1}{p}}}{b\frac{p_s^* -p\theta}{p_s^*}}\right)^{\frac{1}{p\theta +\alpha -1}}
\end{equation*}
namely
{\footnotesize
\begin{equation*}
\xi(t_{min}) =-\left(\frac{p_s^* +\alpha -1}{p_s^*}\Vert u\Vert_{\infty}\vert \Omega\vert^{\frac{p_s^* +\alpha -1}{p_s^*}}S_{p}^{\frac{\alpha -1}{p}}\right)^{\frac{p\theta}{p\theta +\alpha -1}}\left( b\frac{p_s^* -p\theta}{p_s^*p\theta}\right)^{\frac{\alpha -1}{p\theta +\alpha -1}}\left( \frac{\alpha}{1 -\alpha}\right)
\end{equation*}
}
therefore when $ n \to \infty$ we find
{\footnotesize
\begin{equation*}
c \geq a\left(\frac{p_s^* -p}{pp_s^*}\right)\mu^{p} -\left(\frac{p_s^* +\alpha -1}{p_s^*}\Vert u\Vert_{\infty}\vert \Omega\vert^{\frac{p_s^* +\alpha -1}{p_s^*}}S_{p}^{\frac{\alpha -1}{p}}\right)^{\frac{p\theta}{p\theta +\alpha -1}}\left( b\frac{p_s^* -p\theta}{p_s^*p\theta}\right)^{\frac{\alpha -1}{p\theta +\alpha -1}}\left( \frac{\alpha}{1 -\alpha}\right)
\end{equation*}
}
then by \ref{e031} we find
{\tiny
\begin{equation*}
c \geq a\frac{s}{n}\left(\frac{a}{\gamma \lambda p_s^*}\right)^{\frac{n}{s p_s^*}}S_{p}^{\frac{n}{s p}} -\left(\frac{p_s^* +\alpha -1}{p_s^*}\Vert u\Vert_{\infty}\vert \Omega\vert^{\frac{p_s^* +\alpha -1}{p_s^*}}S_{p}^{\frac{\alpha -1}{p}}\right)^{\frac{p\theta}{p\theta +\alpha -1}}\left( b\frac{p_s^* -p\theta}{p_s^*p\theta}\right)^{\frac{\alpha -1}{p\theta +\alpha -1}}\left( \frac{\alpha}{1 -\alpha}\right)
\end{equation*}
}
And thid contraducts \ref{lev}.
\end{proof}
\section{Proof of theorem \ref{th1}}
Put $ \lambda^{*} =\min\{ \lambda_{*}, \lambda_{**}\}$\\
first we prove solutions in $ \mathbb{N}_{\lambda}^{+},$ by Vetalis convergence theorem we get 
\begin{equation}\label{v01}
\lim_{n \to \infty}\int_{\Omega}F(x,u_n^+)dx =\int_{\Omega}F(x,u_*)dx.
\end{equation}
on the other hand by using Minkowski and the Holder inequalities as $ n \to \infty,$ we find
\begin{equation*}
\int c(x)\vert u_n\vert^{-\alpha+1} =\int c(x)\vert u_n -u_* +u_*\vert^{-\alpha+1} \leq \Vert c \Vert_{q^{\prime}}\int \vert u_*\vert^{-\alpha +1} +o(1).
\end{equation*}
and 
\begin{equation*}
\int c(x)\vert u_*\vert^{-\alpha+1} =\int c(x)\vert u_* -u_n +u_n\vert^{-\alpha+1} \leq \Vert c \Vert_{q^{\prime}}\int \vert u_n\vert^{-\alpha +1} +o(1).
\end{equation*}
That is 
\begin{equation}\label{e02}
\int c(x)(u_n^+)^{-\alpha+1} =\int c(x)(u_*^+)^{-\alpha+1} +o(1)
\end{equation}
therefore we get
\begin{equation}\label{v03}
\lim_{n \to \infty}\int_{\Omega}c(x)(u_n^+)^{1 -\alpha} =\int_{\Omega}c(x)u_*dx
\end{equation}
Also by the weakly lower semi-continuity of the norm and \ref{v02} one has
\begin{equation}\label{v04}
\lim_{n \to \infty}\Vert u_n\Vert =\Vert u_*\Vert, \quad \textit{and} \quad \lim_{n \to \infty}\Vert u_n\Vert_{p} =\Vert u_*\Vert_{p}. 
\end{equation}
combining \ref{v01}, \ref{v03}, \ref{v04} and by lemma \ref{l6} we find
\begin{equation*}
(p +\alpha -1) \left(a\Vert u_* \Vert^p + \Vert u_* \Vert_{p}^{p}\right) + b(p \theta +\alpha -1)\Vert u_* \Vert^{p \theta} - \lambda q (q +\alpha -1) \int F(x,u_*) >0.
\end{equation*}
which means that $ u_* \in \mathcal{N}_{\lambda}^{+},$ also by combining \ref{v01}, \ref{v03}, \ref{v04} and by lemma \ref{l8} we get 
\begin{equation*}
\begin{aligned}
& \left(a +b\Vert u_* \Vert^{p(\theta -1)}\right) \int_{\mathbb{R}^{2n}} \frac{\vert u_*(x) - u_*(y) \vert^{p-2}(u_*(x) - u_*(y))(\phi(x) - \phi(y))}{\vert x-y \vert^{n + ps}}\\
& + \int_{\Omega}(u_*)^{p -1} \phi(x) dx  - \int_{\Omega} c(x) (u_*)^{-\alpha} \phi(x) - 
\lambda \int_{\Omega} f(x,u_*) \phi(x)dx \geq 0.
\end{aligned}
\end{equation*}
For any $ \phi \in X_0,$ then by replacing $ \phi$ by $ -\phi$ we get that $ u_*$ is a solution of problem \ref{p}, furthermore by \ref{l7} and Fatous lemma
we get that $ u_*$ is a positive solution to \ref{p0}??. Now we prove the existence of solutions in $ \mathcal{N}_{\lambda}^{-},$
Similarly to case in $ \mathcal{N}_{\lambda}^{+},$ by \ref{ek2} there exist a positive seqence $ (v_n)_n \subset \mathcal{N}_{\lambda}^{-}$ such that 
\begin{itemize}
\item $ J_{\lambda}(v_n) <m^- +\frac{1}{n},$
\item $ J_{\lambda}(v_n) \geq J_{\lambda}(v_n) -\frac{1}{n}\Vert v -v_n\Vert \quad \textit{for any}\quad v \in \mathbb{N}_{\lambda}^{-}.$
\end{itemize}
Due to the coercivity of $ J_{\lambda}$ on $  \mathbb{N}_{\lambda},$ $ v_n$ is bounded in $ X_0,$ so upto subsequence still denoted $ v_n$ there exist $ v_* >0$
such that 
\begin{itemize}
\item $ v_n \rightharpoonup v_*,$ converges weakly in $ X_0,$
\item $ v_n \longrightarrow v_*,$ strongly in $ L^{\eta}(\Omega)$ for $ \eta \in [1, p_s^*),$ therefore in $ L^{-\alpha +1}(\Omega),$
\item $ v_n(x) \longrightarrow v_*(x),$ a.e. in $ \Omega,$
\end{itemize}
as $ n \to \infty.$ By the weak lower semi-continuity and \ref{ek1} we obtain 
\begin{equation*}
J_{\lambda}(v_*) \leq \lim_{n \to \infty}\inf J_{\lambda}(v_n) =\inf_{u \in \mathcal{N}_{\lambda}^{-}}(u)
\end{equation*} 
we can see that $ v_* \neq 0$ in $ \Omega,$ we know that $ v_n \in \mathcal{N}_{\lambda}^{-},$ then by combining \ref{v01}, \ref{v03}, \ref{v04} and by 
lemma \ref{l6} we obtain
\begin{equation}
(p +\alpha -1) \left(a\Vert v_* \Vert^p + \Vert v_* \Vert_{p}^{p}\right) + b(p \theta +\alpha -1)\Vert v_* \Vert^{p \theta} - \lambda q (q +\alpha -1) \int F(x,v_*) <0.
\end{equation}
which means that $ v_* \in \mathcal{N}_{\lambda}^{-}.$ Now similarly to the first claim by applying \ref{l8}, and combining \ref{v01}, \ref{v03}, \ref{v04}
replacing $ u_n$ by $ v_n$ we find 
\begin{equation*}
\begin{aligned}
& \left(a +b\Vert v_* \Vert^{p(\theta -1)}\right) \int_{\mathbb{R}^{2n}} \frac{\vert v_*(x) - v_*(y) \vert^{p-2}(v_*(x) - v_*(y))(\phi(x) - \phi(y))}{\vert x-y \vert^{n + ps}}\\
& + \int_{\Omega}(v_*)^{p -1} \phi(x) dx  - \int_{\Omega} c(x) (v_*)^{-\alpha} \phi(x) - 
\lambda \int_{\Omega} f(x,v_*) \phi(x)dx \geq 0.
\end{aligned}
\end{equation*}
and since this last inequality holds for any $ \phi \in X_0$ then it hold for $ -\phi,$ which give us that $ v_*$ is a solutions of the problem \ref{p},
moreover using \ref{l7} and Fatous lemma we get that $ v_*$ is a solution to problem \ref{p0}, this completes the proof of theorem \ref{th1}. 
\section{Proof Of Theorem \ref{th2}}
in this section we prove the existence of solution of \ref{p0} regarding the critical case where $ q =p_s^*$\newline
Let
{\tiny
\begin{equation}
\lambda_{***} =\frac{1}{\gamma}\left( \frac{s}{n}\right)^{\frac{s p_s^*}{n}} a^{\frac{pp_s^*}{p}}\left\lbrace \left(\frac{p_s^* S_{p}^{\frac{1 -\alpha}{p}}}{(p_s^* +\alpha -1)\Vert u\Vert_{\infty}\vert \Omega\vert^{\frac{p_s^* +\alpha -1}{p_s^*}}}\right)^{\frac{p\theta}{p\theta +\alpha -1}}\left( b\frac{p_s^* -p\theta}{p_s^* p\theta}\right)^{\frac{1 -\alpha}{p\theta +\alpha -1}}\left( \frac{1 -\alpha}{\alpha}\right)\right\rbrace^{\frac{s p_s^*}{n}}
\end{equation}
}
starting with the case in $ \mathcal{N}_{\lambda}^{+}$
\begin{proof}
let $ \lambda^{**} =\min\{ \lambda_{*}, \lambda_{**}, \lambda_{***}\}$ where $ \lambda_{*}, \lambda_{**}$ given in lemma \ref{l1}, and \ref{l2},
by lemma \ref{l2} there exist sequence $ (u_n)_n \in \mathcal{N}_{\lambda}^{+}\cup\{ 0\},$ such that $ u_n \rightharpoonup u^*$ in $ X_0$ and satisfies conditions \ref{ek1}, \ref{ek2} note that $ c_{\lambda} >0,$ since $ \lambda <\lambda^{***},$ moreover if  $ u \in  \mathcal{N}_{\lambda}^{+},$ then for $ \lambda >0$ by corollary \ref{c1} and  $ J_{\lambda}(0) =0$ we get that
\begin{equation*}
m^{+} =\inf_{u \in \mathcal{N}_{\lambda}^{+}}J_{\lambda}(u) <0.
\end{equation*}
Hence by \ref{ek1} we obtain
\begin{equation*}
\lim_{n \to \infty}J_{\lambda}(u_n) =m^+  =\inf_{u \in \in \mathcal{N}_{\lambda}^{+}\cup\{ 0\}} J_{\lambda}(u) <0 \quad \textit{as} \quad n \to \infty.
\end{equation*}
which means that $ (u_n)_n \in \mathcal{N}_{\lambda}^{+}.$ Therefore replacing $ c_{\lambda}$ by $ m^+$ in proposition \ref{pr1} we get that there exist $ u^* \in X_0$ such that up to subseqence $ u_n \longrightarrow u^*$ strongly in $ X_0,$ so we can conclude that $ u^* \in \mathcal{N}_{\lambda},$
Also since $ u_n \rightharpoonup u^*$ in $ X_0$ then by lemma \ref{l6} and the weakly lower semi-continuity of the norm we get 
\begin{equation*}
(p +\alpha -1) \left(a\Vert u^* \Vert^p + \Vert u^* \Vert_{p}^{p}\right) + b(p \theta +\alpha -1)\Vert u^* \Vert^{p \theta} 
- \lambda q (q +\alpha -1) \int F(x,u^*) >0.
\end{equation*}
which means that $ u^* \in \mathcal{N}_{\lambda}^{+},$ moreover by the continuity of $ J_{\lambda}$ on $ \mathcal{N}_{\lambda}^{+}$ then $ J_{\lambda}$ attains its minimum $ m^+$ at $ u^*,$ on the other hand by lemma \ref{l8} and Fatous lemma we have 
\begin{equation}\label{eee60}
\begin{aligned}
& \left(a +b\Vert u^* \Vert^{p(\theta -1)}\right) \int_{\mathbb{R}^{2n}} \frac{\vert u^*(x) - u^*(y) \vert^{p-2}(u^*(x) - u^*(y))(\phi(x) - \phi(y))}{\vert x-y \vert^{n + ps}}\\
& + \int_{\Omega}((u^*)^+)^{p -1} \phi(x) dx  - \int_{\Omega} c(x) ((u^*)^+)^{-\alpha} \phi(x) - 
\lambda \int_{\Omega} f(x,u^*) \phi(x)dx \geq 0.
\end{aligned}
\end{equation}
we can easly see that this last inequaliy holds for any $ \phi \in X_0,$ moreover \newline
$ c(x) ((u^*)^+)^{-\alpha} \phi(x) \in L^{1}(\Omega)$ yields that $ (u^*)^+$ is a weak solution to \ref{p}, also by the weak lower semi continiuity, lemma \ref{l2} and \ref{l5} we get that $ u^* \neq 0$ then $ u^*$ is nontrivial solution to problem \ref{p}. on the other hand replacing $ \phi $ by $ (u^*)^-$ in \ref{weak} we get that $ \Vert (u^*)^-\Vert =0,$ that is  $ u^* \geq 0.$ Applying the maximum priciple we get that $ u^*$ is a positive solution to problem \ref{p0}.
\begin{corollary}
in this corollaary we will demonstrate that $ u^*$ is a local minimizer for $ J_{\lambda},$ by lemma \ref{l1} there exist $ t_1(u) <t_{max}(u)$ such that 
$ t_1(u^*)u^* \in \mathcal{N}_{\lambda}^{+},$ and since $ u^* \in \mathcal{N}_{\lambda}^{+}$ then $ t_1(u^*) =1 <t_{max}(u),$ hence there exist $ \epsilon >0$ and an open ball $ B(\epsilon)\subset ]1,\infty)$ such that $ t_{max}(u^*) \in B$ that is $ \Vert t_{max}(u^*) -t\Vert < \epsilon$ and by continuity of $ t_{max}$ for some $ \delta >0,$ we can take $ t =t(u)$ where $ \Vert u\Vert <\delta $ yields that $ t_{max}(u^* -u) \in B(\epsilon) \subset ]1,\infty)$
hence $  t_{max}(u_0 -u) >\epsilon +1.$ Now from lemma \ref{l4} for some $ \delta^{\prime} >0$ there exist a continuous function $ \xi:B(0,\delta^{\prime} \to \mathbb{R}^{+})$ such that $ \xi(u)(u^* -u) \in \mathcal{N}_{\lambda}^{+}$ and $ \xi(0) =0,$ hence for $ \delta^{\prime\prime} =\min\{\delta,\delta^{\prime}\}$ 
we have that $ t_1(u^* -u) = \xi(u) < 1 +\epsilon <t_{max}(u^* -u)$ for all $\Vert u\Vert <\delta^{\prime\prime}.$ further we have that 
$ t_{max}(u^* -u) >1,$ yields $ J_{\lambda}(u^*) \leq J_{\lambda}(t_1((u^* -u))(u^* -u)) \leq J_{\lambda}(u^* -u)$ for $ \Vert u\Vert <\delta^{\prime\prime}.$  
this finshes the proof
\end{corollary}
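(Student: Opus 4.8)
The plan is to upgrade the already-established fact that $u^*$ minimizes $J_\lambda$ over $\mathcal{N}_\lambda^+$ into the stronger statement that $u^*$ is a local minimizer of $J_\lambda$ on all of $X_0$. The whole argument rests on the fibering picture of Lemma \ref{l1}: for a function $v \in X_+$ the fiber map $\phi_v(t) = J_\lambda(tv)$ decreases from $\phi_v(0) = 0$ to a strict local minimum at $t = t_1(v)$, rises to a local maximum at $t = t_2(v)$, and then decreases to $-\infty$. In particular the minimum of $\phi_v$ over the interval $[0, t_2(v)]$ is attained exactly at $t_1(v)$, since $\phi_v(t_1(v)) < 0 = \phi_v(0)$. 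Because $u^* \in \mathcal{N}_\lambda^+$, the point $t=1$ is precisely this local minimum, so $t_1(u^*) = 1 < t_2(u^*)$.

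First I would record the continuous dependence of the fibering maps $v \mapsto t_1(v)$ and $v \mapsto t_2(v)$ near $u^*$. This follows from the implicit function theorem applied to $\phi'_v$, exactly in the spirit of Lemma \ref{l4} where the function $\zeta$ is produced: since $\phi''_{u^*}(t_1(u^*)) > 0$ and $\phi''_{u^*}(t_2(u^*)) < 0$ are strict, the two roots persist and vary continuously as $v$ ranges over a neighbourhood of $u^*$. Consequently, for perturbations $v = u^* - u$ with $\|u\|$ small enough one has $t_2(u^* - u) > 1$, so that $1$ still lies inside the interval $[0, t_2(u^* - u)]$ on which $t_1$ realizes the global minimum of the fiber.

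Next I would invoke Lemma \ref{l4} to obtain, on a ball $B_{\delta'}(0)$, a continuous map $\xi$ with $\xi(0) = 1$ and $\xi(u)(u^* - u) \in \mathcal{N}_\lambda^+$; by uniqueness of the local-minimum root of the fiber this forces $\xi(u) = t_1(u^* - u)$. Setting $\delta'' = \min\{\delta, \delta'\}$ and combining the two facts gives, for every $u$ with $\|u\| < \delta''$,
\begin{equation*}
J_\lambda(u^*) \leq J_\lambda\big(\xi(u)(u^* - u)\big) = \min_{0 \leq t \leq t_2(u^*-u)} J_\lambda\big(t(u^*-u)\big) \leq J_\lambda(u^* - u),
\end{equation*}
where the first inequality is the minimality of $u^*$ over $\mathcal{N}_\lambda^+$ (valid because $\xi(u)(u^* - u) \in \mathcal{N}_\lambda^+$) and the last inequality uses $1 \in [0, t_2(u^* - u)]$. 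This is exactly the local-minimizer property.

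The delicate point, which I would treat with the most care, is the continuous dependence of $t_1$ and $t_2$ on $v$ together with the compatibility of the various neighbourhoods. One must guarantee that a single radius $\delta''$ simultaneously keeps $u^* - u$ inside $X_+$ (so that the fibering analysis of Lemma \ref{l1} remains available), keeps $t_2(u^* - u) > 1$, and stays within the domain $B_{\delta'}(0)$ of the map $\xi$ furnished by the implicit function theorem. Once these neighbourhoods are aligned, the displayed chain of inequalities closes and the proof is complete.
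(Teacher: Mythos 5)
Your proposal follows essentially the same route as the paper: use the fibering structure of Lemma \ref{l1} to get $t_1(u^*)=1<t_2(u^*)$, continuity of the fibering roots plus the implicit-function map $\xi$ of Lemma \ref{l4} to identify $\xi(u)=t_1(u^*-u)$ and keep $1$ below the second root for small $\|u\|$, and then the chain $J_\lambda(u^*)\le J_\lambda(\xi(u)(u^*-u))\le J_\lambda(u^*-u)$. The only difference is cosmetic — you are somewhat more explicit than the paper about which neighbourhoods must be aligned and about keeping $u^*-u$ in $X^+$ so the two-root picture survives — so the argument is correct and matches the paper's proof.
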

\end{proof}
Proving existence in  $ \mathcal{N}_{\lambda}^{-},$ assuming that $ 0 \in \Omega^{+},$ and let $ \sigma$ be the test function defined for sufficently 
small $ \delta$ as
\begin{equation}\label{eee10}
0 \leq \sigma(x) \leq 1 \,\,\, \textit{in} \,\,\, \Omega^{+}, \,\,\, \sigma(x) =1 \,\,\, \textit{on} \,\,\, B_{\frac{\delta}{2}}(0) \,\,\, \textit{and}\,\,\,
\sigma(x) =0 \,\,\, \textit{on} \,\,\, \mathbb{R}^{n}\setminus B_{\sigma}(0),
\end{equation}
Also we put $ u_{\epsilon, \sigma} =\sigma u_{\epsilon} \in X_0,$ where $ u_{\epsilon}$ given in \ref{eee1} and as in \cite{tarantello1992nonhomogeneous} we define
\begin{equation*}
U_1 =\left\lbrace u \in X_0\setminus\{ 0\}: \frac{1}{\Vert u\Vert} t_{2}\left(\frac{u}{\Vert u\Vert}\right) > 1\right\rbrace \cup \{ 0\},
\end{equation*}
and
\begin{equation*}
U_2 =\left\lbrace u \in X_0\setminus\{ 0\}: \frac{1}{\Vert u\Vert} t_{2}\left(\frac{u}{\Vert u\Vert}\right) < 1\right\rbrace,
\end{equation*}
by lemma \ref{l1} there exist $ t_2$ such that if $ u \in \mathcal{N}_{\lambda}^{-}$ we get that $ t_2(u) =1$ that is 
$ \Vert u\Vert \frac{1}{\Vert u\Vert} u =u \in \mathcal{N}_{\lambda}^{-}$ yields $ t_2(\frac{u}{\Vert u\Vert}) =\Vert u\Vert$ means $ \frac{1}{\Vert u\Vert} t_2(\frac{u}{\Vert u\Vert}) =1$ that is to say
\begin{equation*}
\mathcal{N}_{\lambda}^{-} =\left\lbrace u \in X_0\setminus\{ 0\}: \frac{1}{\Vert u\Vert} t_{2}\left(\frac{u}{\Vert u\Vert}\right) = 1\right\rbrace
\end{equation*}
and since $ 1 < t_{max}(u) <t_{2}(u)$ when $ u \in \mathcal{N}_{\lambda}^{+}$ means that $ \mathcal{N}_{\lambda}^{+} \subset U_1$ hence by lemma \ref{l6} 
we get that $ u^* \in \mathcal{N}_{\lambda}^{+}.$
\begin{lemma}\label{l9}
let $ \lambda \in (0 ,\lambda^{**})$ and $ v_*$ be a local minimizer in $ X_0$ for $ J_{\lambda}$ then for any $\epsilon >0 $ and any test function $ \sigma$ defined in \ref{eee10} there exist $ l_0 >0$ such that $ u_0 +l_0u_{\epsilon,\sigma} \in U_2$  
\end{lemma}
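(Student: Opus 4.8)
The plan is to read membership in $U_2$ geometrically and then push $v_*$ far out along the direction $u_{\epsilon,\sigma}$. Write $u_0=v_*$ for the given local minimizer; being a local minimizer it lies in $\mathcal N_\lambda^+$, and by the inclusion $\mathcal N_\lambda^+\subset U_1$ established just before the lemma it sits strictly inside the shell $\mathcal N_\lambda^-$. Recall from Lemma \ref{l1} that for each admissible $u$ the fiber map $\phi_u$ has its unique local maximum at the point $t_2(u/\|u\|)\,u/\|u\|\in\mathcal N_\lambda^-$; consequently $u\in U_2$ is equivalent to $t_2(u/\|u\|)<\|u\|$, i.e.\ the point $u$ lies beyond that maximiser along its own ray. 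So, setting $w(l)=u_0+l\,u_{\epsilon,\sigma}$, I would show that as $l$ grows the norm $\|w(l)\|$ eventually overtakes the bounded quantity $t_2\bigl(w(l)/\|w(l)\|\bigr)$.

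First I would check admissibility of the relevant directions. Since $u_{\epsilon,\sigma}=\sigma u_\epsilon\ge 0$ is supported in $B_\delta(0)\subset\Omega^+$, hypotheses \ref{fd2} and \ref{fd1} force $\int_\Omega F(x,u_{\epsilon,\sigma})>0$, so $u_{\epsilon,\sigma}\in X_+$ and $t_2(u_{\epsilon,\sigma})$ is well defined. The same strict positivity of $\int_\Omega F(x,w(l)^+)$ persists once $l$ is large, because the term $l\,u_{\epsilon,\sigma}$ dominates in the integral; hence $w(l)\in X_+$ and $t_2\bigl(w(l)/\|w(l)\|\bigr)$ is defined for all large $l$.

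Next I would carry out the asymptotics $l\to\infty$. From $\|w(l)\|\ge l\,\|u_{\epsilon,\sigma}\|-\|u_0\|$ the norm diverges, while the normalized direction satisfies $w(l)/\|w(l)\|\to u_{\epsilon,\sigma}/\|u_{\epsilon,\sigma}\|$ in $X_0$. Because $\lambda<\lambda_{**}$ gives $\mathcal N_\lambda^0=\{0\}$ by Lemma \ref{l2}, the maximiser is non-degenerate ($\phi''_u(t_2)<0$), so the implicit function theorem applied to $\phi'_u(t_2)=0$ (the same mechanism used for $\zeta$ in Lemma \ref{l4}) yields continuity of $u\mapsto t_2(u)$ near the admissible direction $u_{\epsilon,\sigma}/\|u_{\epsilon,\sigma}\|$. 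Therefore $t_2\bigl(w(l)/\|w(l)\|\bigr)\to t_2\bigl(u_{\epsilon,\sigma}/\|u_{\epsilon,\sigma}\|\bigr)$, a finite number, and stays bounded in $l$. Comparing a divergent norm with this bounded sequence, there is $l_0>0$ with
\[
\frac{1}{\|w(l_0)\|}\,t_2\!\left(\frac{w(l_0)}{\|w(l_0)\|}\right)<1,
\]
which is exactly $u_0+l_0 u_{\epsilon,\sigma}\in U_2$.

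The step I expect to be the main obstacle is the control of $t_2$ along the whole family $w(l)/\|w(l)\|$: one must guarantee that these directions remain inside $X_+$ (so that the unique local maximum of the fiber map persists and $t_2$ does not blow up or cease to exist) and that the continuity from the implicit function theorem is effective as $l\to\infty$, not merely pointwise at the limit. This is resolved by the convergence of directions to the fixed admissible direction $u_{\epsilon,\sigma}/\|u_{\epsilon,\sigma}\|$ together with the strict sign $\int_\Omega F>0$ there, which is stable under the small perturbations $w(l)/\|w(l)\|-u_{\epsilon,\sigma}/\|u_{\epsilon,\sigma}\|$.
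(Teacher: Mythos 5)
Your proposal is correct and shares the paper's overall skeleton: both arguments reduce membership in $U_2$ to the comparison ``$t_2\bigl(w(l)/\Vert w(l)\Vert\bigr)$ stays bounded while $\Vert w(l)\Vert\to\infty$'', and both use that $u_{\epsilon,\sigma}$ is supported in $\Omega^{+}$ so that $\int_\Omega F(x,u_{\epsilon,\sigma})\,dx>0$ and the normalized directions remain admissible. Where you diverge is in how the boundedness of $t_2$ along the family is obtained. The paper argues by contradiction: if $l_n\to\infty$ with $t_2(v_n)\to\infty$, then by dominated convergence $\int_\Omega F(x,v_n)\,dx$ converges to a strictly positive limit, so $J_{\lambda}(t_2(v_n)v_n)$ becomes unbounded on $\mathcal{N}_{\lambda}^{-}$, contradicting the coercivity of $J_{\lambda}$ on $\mathcal{N}_{\lambda}$ from Lemma \ref{l5}; it then picks $l_0$ explicitly so that $\Vert u_0+l_0u_{\epsilon,\sigma}\Vert$ exceeds the resulting uniform bound. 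You instead invoke continuity of $u\mapsto t_2(u)$ at the limiting direction $u_{\epsilon,\sigma}/\Vert u_{\epsilon,\sigma}\Vert$, justified by the non-degeneracy $\varphi''_u(t_2)<0$ (guaranteed by $\mathcal{N}_{\lambda}^{0}=\{0\}$ for $\lambda<\lambda_{**}$) and the implicit function theorem, together with the convergence $w(l)/\Vert w(l)\Vert\to u_{\epsilon,\sigma}/\Vert u_{\epsilon,\sigma}\Vert$. Both mechanisms are legitimate; yours buys a cleaner, quantifier-free limit statement and reuses machinery already set up in Lemma \ref{l4}, while the paper's contradiction argument avoids having to establish continuity of $t_2$ at all and only needs the coercivity already proved. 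As a side remark, your route also sidesteps the paper's explicit expansion of $\Vert u_0+l_0u_{\epsilon,\sigma}\Vert^{p}$ as a sum of $p$-th powers plus a cross term, which is not a valid identity for general $p$; your purely asymptotic comparison $\Vert w(l)\Vert\ge l\Vert u_{\epsilon,\sigma}\Vert-\Vert u_0\Vert$ is the safer estimate.
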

\begin{proof}
there exist $ C_6 >0$ such that $ 0 <t_2(\frac{u_0 +lu_{\epsilon,\sigma}}{\Vert u_0 +lu_{\epsilon,\sigma}\Vert}) < C_6$ suppose the opposite let $ (l_n)_n \in \mathbb{R}^+$ such that $ l_n \to \infty$ with $ t_2(v_n) \longrightarrow \infty$ where $ v_n =\frac{u_0 +lu_{\epsilon,\sigma}}{\Vert u_0 +lu_{\epsilon,\sigma}\Vert}$ as $ n \to \infty.$ By the Lebesgue dominated convergence theorem, we get 
\begin{equation*}
\begin{aligned}
\lim_{n \to \infty}\int_{\Omega}F(x,v_n)dx &=\lim_{n \to \infty}\frac{1}{\Vert u_0 +l_nu_{\epsilon,\sigma}\Vert^{p_s^*}}\int_{\Omega}F(x,u_0 +l_nu_{\epsilon,\sigma})dx\\
& =\lim_{n \to \infty}\frac{1}{\Vert \frac{u_0}{l_n}  +u_{\epsilon,\sigma}\Vert^{p_s^*}}\int_{\Omega}F(x,\frac{u_0}{l_n} +u_{\epsilon,\sigma})dx\\
& =\frac{1}{\Vert u_{\epsilon, \sigma\Vert}}\int_{\Omega}F(x, u_{\epsilon,\sigma})dx\\
\end{aligned}
\end{equation*}
and by \ref{e10} if $ u \in \mathcal{N}_{\lambda}^{-}$ we obtain $ \int_{\Omega} F(x,u)dx >0$ from this follows that funcational $ J_{\lambda}$ is unbounded below at $ t_2(v_n)v_n$ which contrducts the coercivity of it that is to say that as  $ n \to \infty$
\begin{equation*}
\begin{aligned}
J_{\lambda}(t_2(v_n)v_n) &=  \frac{t_2(v_n)^P}{p} \left(a \Vert v_n \Vert^p + \Vert v_n \Vert^{p}_{p}\right) + \frac{b t_2(v_n)^{p \theta}}{p \theta}\Vert v_n \Vert^{p \theta}\\ 
&-\frac{t_2(v_n)^{1-\alpha}}{-\alpha+1} \int c(x)v_n^{-\alpha+1} - \lambda t_2(v_n)^q \int F(x,v_n) \longrightarrow \infty.
\end{aligned}
\end{equation*}
Put $ l_0^{p} =\frac{\vert c^p -\Vert u\Vert^{p} \vert}{\Vert u_{\epsilon,\sigma}\Vert^{p}} +c^{\prime} $ where $ c^{\prime} >0$
{\tiny
\begin{equation*}
\begin{aligned}
&\Vert u_0 +l_0u_{\epsilon,\sigma}\Vert^{p}\\
&=\Vert u_0\Vert^{p} +l_0^{p}\Vert u_{\epsilon,\sigma}\Vert^{p} +2l_0\int_{\mathbb{R}^{2n}}\frac{\vert u_0(x) -u_0(y)\vert^{p -2}(u_0(x) -u_0(y))(u_{\epsilon,\sigma}(x) -u_{\epsilon,\sigma}(y))}{\vert x -y\vert^{n +ps}}dxdy\\
&>\Vert u_0\Vert^{p} +l_0^{p}\Vert u_{\epsilon,\sigma}\Vert^{p} > \Vert u_0\Vert^{p} +\vert c^p -\Vert u\Vert^{p}\vert \geq c^{p} >\left( t_2(\frac{u_0 +lu_{\epsilon,\sigma}}{\Vert u_0 +l_0u_{\epsilon,\sigma}\Vert})\right)^{p}
\end{aligned}
\end{equation*}
}
this means that $  u_0 +l_0u_{\epsilon,\sigma} \in U_2.$
\end{proof}
\begin{lemma}\label{l10}
let $ \lambda <\lambda^{**}$ and $ u^*$ local minimum of $ J_{\lambda}$ in $ X_0$ then for all $ r >0$ and $ \sigma$ as in \ref{eee10} there exist $\epsilon^{\prime} >0$ depending on $ r, \sigma$ and there exist $ \lambda^{***} >0$ such that
\begin{equation*}
J_{\lambda}(u_0 +ru_{\epsilon,\sigma}) <c_{\lambda},\,\,\, \textit{for any} \,\,\, \epsilon \in (0, \epsilon_{0})\,\,\, \textit{and}\,\,\, \lambda < \lambda^{***}
\end{equation*} 
for $ b \in (0 ,\epsilon_{0}^{m})$ with $ m > \frac{n-ps}{p -1},$
\end{lemma}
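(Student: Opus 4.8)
The argument is the classical Brezis--Nirenberg concentration estimate, adapted to the Kirchhoff, nonlocal and singular structure of $J_\lambda$. For each fixed $r$ the quantity to control is $J_\lambda(u_0+ru_{\epsilon,\sigma})$, but by the coercivity of $J_\lambda$ (Lemma \ref{l5}) one has $J_\lambda(u_0+ru_{\epsilon,\sigma})\to-\infty$ as $r\to\infty$, while for $r$ near $0$, $J_\lambda(u_0+ru_{\epsilon,\sigma})\approx J_\lambda(u_0)=m^+<0<c_\lambda$; hence only an intermediate range of $r$, near the peak of the bubble energy, is delicate, and it suffices to bound the maximum of $J_\lambda$ along the ray. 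The analytic input is the sharp asymptotics of the truncated extremal $u_{\epsilon,\sigma}=\sigma u_\epsilon$ from \cite{mosconi2016brezis}: as $\epsilon\to0$,
\begin{equation*}
\Vert u_{\epsilon,\sigma}\Vert^{p}\le S_{p}^{\frac{n}{sp}}+C\,\epsilon^{\frac{n-ps}{p-1}},\qquad \Vert u_{\epsilon,\sigma}\Vert_{p_s^*}^{p_s^*}\ge S_{p}^{\frac{n}{sp}}-C\,\epsilon^{\frac{n}{p-1}},
\end{equation*}
together with the fact that the $L^{p}$--mass and the singular integral of $u_{\epsilon,\sigma}$ are of lower order.

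First I would expand $J_\lambda(u_0+ru_{\epsilon,\sigma})$ and isolate the dominant bubble balance between the $a$--part of $\bar M$ and the critical nonlinearity. Since $F$ is homogeneous of order $q=p_s^*$, we have $\int F(x,(ru_{\epsilon,\sigma})^+)=r^{p_s^*}\int F(x,u_{\epsilon,\sigma}^+)$, and by this homogeneity together with the positivity \eqref{fd2} of $f$ at the concentration point $0\in\Omega^+$ the integral $\int F(x,u_{\epsilon,\sigma}^+)$ is comparable to $\Vert u_{\epsilon,\sigma}\Vert_{p_s^*}^{p_s^*}$; thus the leading competition
\begin{equation*}
\frac{a}{p}\,r^{p}\Vert u_{\epsilon,\sigma}\Vert^{p}-\lambda\, r^{p_s^*}\int F(x,u_{\epsilon,\sigma}^+)
\end{equation*}
is maximized at a finite $r_*$. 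Since $\tfrac1p-\tfrac1{p_s^*}=\tfrac sn$, this maximal value equals, to leading order, the first term of $c_\lambda$ in \eqref{lev}. The contribution of $u_0$ is handled by its being a weak solution: the first--order cross terms are removed using the weak (sub)inequality of Lemma \ref{l8} in the direction $u_{\epsilon,\sigma}$, and the residual self--energy of $u_0$ contributes $m^+<0$, which is favorable; the singular part $-\frac{1}{1-\alpha}\int c(x)(\cdot)^{1-\alpha}$ is estimated through \eqref{e2} and matches the second (negative) term of $c_\lambda$.

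The delicate point, and the main obstacle, is the higher--order Kirchhoff term $\frac{b}{p\theta}\Vert u_0+ru_{\epsilon,\sigma}\Vert^{p\theta}$. Because $p\theta>p$, its bubble part is of size $b\,(\Vert u_{\epsilon,\sigma}\Vert^{p})^{\theta}\sim b\,S_{p}^{\theta n/(sp)}$, i.e. $O(b)$, and it has no counterpart in the leading threshold $c_\lambda$; were $b$ not small relative to $\epsilon$, this positive term would spoil the comparison. This is precisely the reason for coupling $b\in(0,\epsilon_0^{m})$ with $m>\frac{n-ps}{p-1}$: it forces $b=o\!\left(\epsilon_0^{(n-ps)/(p-1)}\right)$, so that the entire Kirchhoff perturbation is of strictly smaller order than the decisive gradient correction $\epsilon^{(n-ps)/(p-1)}$ and can be absorbed. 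The remaining genuinely nonlocal difficulty is the cross--interaction between the fixed profile $u_0$ and the concentrating bubble: for $p\ne2$ there is no bilinear expansion, so I would instead invoke the algebraic inequalities \eqref{e013} for the fractional $p$--Gagliardo form, together with the Mosconi--Squassina estimates, to show this interaction is $o_\epsilon(1)$, being supported where $\sigma\equiv1$ and $u_0$ is smooth. Choosing first $\epsilon_0=\epsilon_0(r,\sigma)$ small to make the bubble corrections and cross terms negligible, then $\lambda^{***}$ and $b<\epsilon_0^{m}$, yields the strict inequality $J_\lambda(u_0+ru_{\epsilon,\sigma})<c_\lambda$.
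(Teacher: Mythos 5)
Your overall strategy coincides with the paper's: expand $J_\lambda$ along the ray $u_0+ru_{\epsilon,\sigma}$, use the Mosconi--Squassina asymptotics $\Vert u_{\epsilon,\sigma}\Vert^{p}\le S_p^{n/sp}+O(\epsilon^{(n-ps)/(p-1)})$, kill the first-order cross terms by the weak-solution property of $u_0$, bound the sup of the $p$-versus-$p_s^*$ competition by $\tfrac{s}{n}a^{n/sp}(p_s^*\lambda)^{-n/(sp_s^*)}S_p^{n/sp}(\cdot)$, and absorb the Kirchhoff bubble energy by imposing $b<\epsilon_0^{m}$ with $m>\tfrac{n-ps}{p-1}$ (your reading of why that exponent appears is exactly the paper's). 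However, there is a genuine gap at the decisive step: the mechanism that makes the inequality \emph{strict}. Your leading-order balance gives a maximal value that ``equals, to leading order, the first term of $c_\lambda$,'' and you then declare all corrections ``negligible'' and absorbable. That cannot close the argument: $c_\lambda$ contains a \emph{negative} second term, so you must show $J_\lambda(u_0+ru_{\epsilon,\sigma})$ lies strictly below the first term of $c_\lambda$ by a definite margin, and error terms of indefinite sign that are merely $o(1)$ do not provide one.

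The paper produces that margin in two places you omit. First, from the superlinearity hypotheses \eqref{fd}, \eqref{Fd} it extracts an explicitly \emph{negative} interaction term
\begin{equation*}
-\lambda\, C\, r^{p_s^*-1}\int f(x,u_{\epsilon,\sigma})\,u_0\,dx \;=\; -\lambda\, C''\,\epsilon^{\frac{n-sp}{p}},
\end{equation*}
whose order $\epsilon^{(n-sp)/p}$ is strictly lower than (hence dominates, for small $\epsilon$) every positive correction $O(\epsilon^{(n-ps)/(p-1)})$, including the singular term, the $L^p$ mass, and the residual Kirchhoff contribution $b=\epsilon^{m}$; you instead treat this cross term as something to be ``removed,'' discarding the very quantity that wins the comparison. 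Second, the paper invokes \eqref{fd1} to take the lower-bound constant $C_9$ in $\int F(x,u_{\epsilon,\sigma})\ge C_9\Vert u_{\epsilon,\sigma}\Vert_{p_s^*}^{p_s^*}$ large enough that $C_9^{-n/(sp_s^*)}<\gamma^{-n/(sp_s^*)}$, strictly lowering the leading constant below the first term of $c_\lambda$; and it absorbs the negative second term of $c_\lambda$ into the definition of $\lambda_{****}$ (this is where the extra threshold on $\lambda$ in the statement actually comes from --- your plan never explains why a new restriction $\lambda<\lambda^{***}$ is needed). A minor further point: the initial claim that $J_\lambda(u_0+ru_{\epsilon,\sigma})\to-\infty$ as $r\to\infty$ ``by the coercivity of $J_\lambda$ (Lemma \ref{l5})'' is misattributed --- Lemma \ref{l5} asserts coercivity ($J_\lambda\to+\infty$) on $\mathcal{N}_\lambda$; the decay to $-\infty$ along the ray comes from $p_s^*>p\theta$ and $\int F(x,u_{\epsilon,\sigma})>0$.
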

\begin{proof}
we have
{\footnotesize
\begin{equation*}
\begin{aligned}
&J_{\lambda}(u_0 +ru_{\epsilon,\sigma})=\frac{1}{p} \left(a \Vert u_0 +ru_{\epsilon,\sigma}\Vert^p + \Vert u_0 +ru_{\epsilon,\sigma} \Vert^{p}_{p}\right) + \frac{b}{p \theta}\Vert u_0 +ru_{\epsilon,\sigma} \Vert^{p \theta} \\
& -\frac{1}{-\alpha+1} \int c(x)(u_0 +ru_{\epsilon,\sigma})^{-\alpha+1} - \lambda\int F(x,u_0 +ru_{\epsilon,\sigma})dx\\
& =\frac{a}{p} \left(\Vert u_0\Vert^{p} +r^{p}\Vert u_{\epsilon,\sigma}\Vert^{p} +2r\int_{\mathbb{R}^{2n}}\frac{\vert u_0(x) -u_0(y)\vert^{p -2}(u_0(x) -u_0(y))(u_{\epsilon,\sigma}(x) -u_{\epsilon,\sigma}(y))}{\vert x -y\vert^{n +ps}}dxdy \right)\\
& +\frac{1}{p}\left( \Vert u_0\Vert^{p}_{p} +r^{p}\Vert u_{\epsilon,\sigma}\Vert^{p}_{p} +2r\int_{\Omega}u_0^{p-1}u_{\epsilon,\sigma}dx\right)\\
& +\frac{b}{p\theta}\left( \Vert u_0\Vert^{p} +r^{p}\Vert u_{\epsilon,\sigma}\Vert^{p} +2r\int_{\mathbb{R}^{2n}}\frac{\vert u_0(x) -u_0(y)\vert^{p -2}(u_0(x) -u_0(y))(u_{\epsilon,\sigma}(x) -u_{\epsilon,\sigma}(y))}{\vert x -y\vert^{n +ps}}dxdy \right)^{\theta}\\
& -\frac{1}{-\alpha+1} \int c(x)(u_0 +ru_{\epsilon,\sigma})^{-\alpha+1} - \lambda\int F(x,u_0 +ru_{\epsilon,\sigma})dx\\
\end{aligned}
\end{equation*}
}
Now using the  following ineauality as in \cite{fiscella2019nehari} which holds for any $ c, d\geq 0$ and $ \theta >1,$ with $ C(\theta) >0$ 
\begin{equation*}
\left( c +d\right)^{\theta} \leq c^{\theta} +C(\theta)\left( c^{\theta} +d^{\theta}\right) +\theta c^{\theta -1}d.
\end{equation*}
we get
\footnotesize
\begin{equation*}
\begin{aligned}
&+\frac{b}{p\theta}\left( \Vert u_0\Vert^{p} +r^{p}\Vert u_{\epsilon,\sigma}\Vert^{p} +2r\int_{\mathbb{R}^{2n}}\frac{\vert u_0(x) -u_0(y)\vert^{p -2}(u_0(x) -u_0(y))(u_{\epsilon,\sigma}(x) -u_{\epsilon,\sigma}(y))}{\vert x -y\vert^{n +ps}}dxdy \right)^{\theta}\\
&\leq \frac{b}{p\theta}\Vert u_0\Vert^{p} +\frac{2rb}{p} \Vert u_0\Vert^{p(\theta -1)}\int_{\mathbb{R}^{2n}}\frac{\vert u_0(x) -u_0(y)\vert^{p -2}(u_0(x) -u_0(y))(u_{\epsilon,\sigma}(x) -u_{\epsilon,\sigma}(y))}{\vert x -y\vert^{n +ps}}dxdy\\
& +bC_{\theta} r^{p\theta}\Vert u_{0}\Vert^{2\theta} +bD_{\theta} \Vert u_{\epsilon,\sigma}\Vert^{p\theta}\\
\end{aligned}
\end{equation*}
then 
{\tiny
\begin{equation*}
\begin{aligned}
&J_{\lambda}(u_0 +ru_{\epsilon,\sigma}) \leq  J_{\lambda}(u_0)+\frac{r^{p}}{p}\left( a\Vert u_{\epsilon,\sigma}\Vert +\Vert u_{\epsilon,\sigma}\Vert^{p}_{p}\right) +bC_{\theta}\Vert u_{\epsilon,\sigma}\Vert^{p\theta} +b D_{\theta} r^{p\theta}\\
& \frac{2r}{p}\left\{\left(a +b\Vert u_0\Vert^{p(\theta -1)}\right)\int_{\mathbb{R}^{2n}}\frac{\vert u_0(x) -u_0(y)\vert^{p -2}(u_0(x) -u_0(y))(u_{\epsilon,\sigma}(x) -u_{\epsilon,\sigma}(y))}{\vert x -y\vert^{n +ps}}dxdy +\int_{\Omega}u_0^{p-1} u_{\epsilon,\sigma}dx\right\}\\
& -\frac{1}{1 -\alpha}\int c(x)(u_0 +ru_{\epsilon,\sigma})^{-\alpha+1} -u_0^{-\alpha +1}dx -\lambda\int F(x,u_0 +ru_{\epsilon,\sigma}) -F(x,u_0)dx\\
\end{aligned}
\end{equation*}
}
Since $ u_0$ is a weak solution to the problem we obtain
\begin{equation}\label{eee20}
\begin{aligned}
& J_{\lambda}(u_0 +ru_{\epsilon,\sigma})\leq  J_{\lambda}(u_0)+\frac{r^{p}}{p}\left( a\Vert u_{\epsilon,\sigma}\Vert +\Vert u_{\epsilon,\sigma}\Vert^{p}_{p}\right) +bC_{\theta}\Vert u_{0}\Vert^{p\theta} +b D_{\theta} r^{p\theta}\Vert u_{\epsilon,\sigma}\Vert^{p\theta}\\
& -\frac{1}{1 -\alpha}\int c(x)(u_0 +ru_{\epsilon,\sigma})^{-\alpha+1} -u_0^{-\alpha +1} -\frac{2r(1 -\alpha)}{p}u_0^{-\alpha}u_{\epsilon,\sigma}dx\\
& -\lambda\int F(x,u_0 +ru_{\epsilon,\sigma}) -F(x,u_0) -\frac{2r}{p}f(x, u_0)u_{\epsilon,\sigma} dx\\
\end{aligned}
\end{equation}
by \cite{sun2011exact}, \cite{fiscella2019nehari} and \cite{mosconi2016brezis} we have
\begin{equation*}
\begin{aligned}
&\int c(x)\{(u_0 +rU_{\epsilon,\sigma})^{-\alpha+1} -u_0^{-\alpha +1}\} \leq \Vert c\Vert_{\infty}r^{-\alpha +1}  \int U_{\epsilon,\sigma}^{1 -\alpha}dx\\ 
&\leq  \Vert c\Vert_{\infty}r^{-\alpha +1} \epsilon^{\frac{n -ps}{p -1}(1 -\alpha)} \int 
\left( \epsilon^{p^{\prime}} +\vert x\vert^{p^{\prime}}\right)^{-\frac{n -sp}{p}(1 -\alpha)}dx \\
\end{aligned}
\end{equation*}
\begin{equation*}
\int c(x)\{(u_0 +rU_{\epsilon,\sigma})^{-\alpha+1} -u_0^{-\alpha +1}\} =O(\epsilon^{\frac{n -ps}{p -1}(1 -\alpha)})
\end{equation*}
and by Cauchy Schwartz inequality we get 
{\scriptsize
\begin{equation*}
\int u_0^{-\alpha}u_{\epsilon,\sigma}dx =\int u_0^{-\alpha}u_{\epsilon}dx \leq c\int \left( u_{\epsilon}^{2}dx\right)^{\frac{1}{2}} = C \epsilon^{\frac{n -ps}{p -1}}\left( \int \left( \epsilon^{p^{\prime} +\vert x\vert^{p^{\prime}}}\right)^{-2\frac{n -ps}{p}}\right)^{\frac{1}{2}} =O(\epsilon^{\frac{n -ps}{p -1}}).
\end{equation*}
}
Hence we get
\begin{equation}\label{eee21}
-\frac{1}{1 -\alpha}\int c(x)(u_0 +ru_{\epsilon,\sigma})^{-\alpha+1} -u_0^{-\alpha +1} -\frac{2r(1 -\alpha)}{p}u_0^{-\alpha}u_{\epsilon,\sigma}dx =O(\epsilon^{\frac{n -ps}{p -1}})
\end{equation}
and by \ref{fd2}, \ref{fd} there exist $ C_{7} >0$ such that
\begin{equation*}
\int F(x,u_0 +ru_{\epsilon,\sigma}) -F(x,u_0) -\frac{2r}{p}f(x, u_0)u_{\epsilon,\sigma} dx \geq r^{p_s^*}\int F(x,u_{\epsilon,\sigma})dx +C_{7} r^{p_s^* -1}\int f(x, u_{\epsilon,\sigma})u_0 dx
\end{equation*}
then combining \ref{eee20}, \ref{eee21} and denoting $ \Vert u_0\Vert =R$ we get
\begin{equation}\label{eee50}
\begin{aligned}
& J_{\lambda}(u_0 +ru_{\epsilon,\sigma})\leq J_{\lambda}(u_0)+\frac{r^{p}}{p}\left( a\Vert u_{\epsilon,\sigma}\Vert +\Vert u_{\epsilon,\sigma}\Vert^{p}_{p}\right) +bC_{\theta}R^{p\theta} +b D_{\theta} r^{p\theta}\Vert u_{\epsilon,\sigma}\Vert^{p\theta}\\
& -\lambda\left( r^{p_s^*}\int F(x,u_{\epsilon,\sigma})dx +C_{7} r^{p_s^* -1}\int f(x, u_{\epsilon,\sigma})u_0 dx\right) +O(\epsilon^{\frac{n -ps}{p -1}})
\end{aligned}
\end{equation}
Let $ T$ the function defined as 
{\scriptsize
\begin{equation}\label{eee30}
T(t) =\frac{t^{p}}{p}\left( a\Vert u_{\epsilon,\sigma}\Vert +\Vert u_{\epsilon,\sigma}\Vert^{p}_{p}\right) +b D_{\theta} t^{p\theta}\Vert u_{\epsilon,\sigma}\Vert^{p\theta} -\lambda t^{p_s^*}\int F(x,u_{\epsilon,\sigma})dx -\lambda C_{7}^{\prime} t^{p_s^* -1}\int f(x, u_{\epsilon,\sigma}) dx
\end{equation}
}
we can see that if $ x \in \Omega^{+},$ $ T(t) \longrightarrow -\infty$ as $ t \to \infty.$ and $ T(t) \xrightarrow{>} 0$ as $ \xrightarrow{>}0$ then by Roll theoem there exist $ t_{\epsilon} >0$
such that 
\begin{equation*}
T(t_{\epsilon}u_{\epsilon,\sigma}) =\sup_{t \geq 0}T(t), \quad \textit{and} \quad \frac{d}{dt}T(t_{\epsilon}u_{\epsilon,\sigma}) =0,
\end{equation*}
and
\begin{equation*}
\begin{aligned}
& T^{\prime}(t_{\epsilon}) =t_{\epsilon}^{p -1}\left( a\Vert u_{\epsilon,\sigma}\Vert^{p} +\Vert u_{\epsilon,\sigma}\Vert^{p}_{p}\right) +b p\theta(p\theta -1)
 D_{\theta} t_{\epsilon}^{p\theta -1}\Vert u_{\epsilon,\sigma}\Vert^{p\theta} -\lambda p_s^* t_{\epsilon}^{p_s^* -1}\int F(x,u_{\epsilon,\sigma})dx\\
& -\lambda (p_s^* -1)C_{5}^{\prime} t_{\epsilon}^{p_s^* -2}\int f(x, u_{\epsilon,\sigma}) dx
\end{aligned}
\end{equation*}
by that fact that $ T^{\prime}(t_{\epsilon}) >0,$ for $ t \in (0, t_{\epsilon})$ we obtain
\begin{equation*}
\begin{aligned}
& (p -1)\left( a\Vert u_{\epsilon,\sigma}\Vert +\Vert u_{\epsilon,\sigma}\Vert^{p}_{p}\right) <\lambda p_s^* (p_s^* -1)t_{\epsilon}^{p_s^* -p}\int F(x,u_{\epsilon,\sigma})dx\\
& +\lambda (p_s^* -2)(p_s^* -1)C_{5}^{\prime} t_{\epsilon}^{p_s^* -p-1}\int f(x, u_{\epsilon,\sigma}) dx -b p\theta (p\theta -1)D_{\theta} t_{\epsilon}^{p\theta -p}\Vert u_{\epsilon,\sigma}\Vert^{p\theta}
\end{aligned}
\end{equation*}
it mean that $ t_{\epsilon}$ is uniformly bounded from bellow that is to say there exist $ t_3$ such that $ 0 <t_3 \leq t_{\epsilon}.$ 
on the other hand  for $ t >t_{\epsilon},$ since $ T^{\prime}(t_{\epsilon}) <0,$ we have that
\begin{equation*}
\begin{aligned}
& b p\theta (p\theta -1) D_{\theta} \Vert u_{\epsilon,\sigma}\Vert^{p\theta} >\lambda p_s^* (p_s^* -1)t_{\epsilon}^{p_s^* -p\theta}\int F(x,u_{\epsilon,\sigma})dx\\
& +\lambda (p_s^* -2)(p_s^* -1)C_{5}^{\prime} t_{\epsilon}^{p_s^* -p\theta -1}\int f(x, u_{\epsilon,\sigma}) dx -(p -1)t_{\epsilon}^{p -p\theta}\left( a\Vert u_{\epsilon,\sigma}\Vert +\Vert u_{\epsilon,\sigma}\Vert^{p}_{p}\right)
\end{aligned}
\end{equation*}
this means that there exist $ t_4 > 0$ such that $ t_{\epsilon} \leq t_4$\newline
Also following the steps in \cite{servadei2015brezis} and \cite{mosconi2016brezis} we obtain the folowing estimates
\begin{equation}\label{es01}
\begin{aligned}
&\int_{\mathbb{R}^n} \vert u_{\epsilon,\sigma}\vert^{p_s^*}dx =\int_{\mathbb{R}^n} \vert u_{\epsilon}\vert^{p_s^*} +\int_{\mathbb{R}^n} \left( 
\vert \sigma(x)\vert^{p_s^*} -1 \right)\vert u_{\epsilon}\vert^{p_s^*}\\
& = S_{p}^{\frac{n}{ps}} +C_6\epsilon^{ \frac{n}{p -1}}\int_{B_{\delta}^{c}} \left( \epsilon^{p^{\prime}} +\vert x\vert^{p^{\prime}}\right)^{-n}\\
& =S_{p}^{\frac{n}{ps}} +O(\epsilon^{\frac{n}{p -1}}).
\end{aligned}
\end{equation}
and
\begin{equation}\label{es03}
\begin{aligned}
&\Vert u_{\epsilon,\sigma}\Vert^{p}=\Vert \sigma u_{\epsilon}\Vert^{p} \leq S_{p}^{\frac{n}{ps}} +O(\epsilon^{\frac{n -ps}{p -1}}),\\
& \Vert u_{\epsilon,\sigma}\Vert^{p\theta} \leq S_{p}^{\frac{n\theta}{ps}} +O(\epsilon^{\frac{n -ps}{p -1}})\\
&  \Vert u_{\epsilon,\sigma}\Vert^{p}_{p} = \int_{\Omega}\vert u_{\epsilon,\sigma}\vert^{p}dx = \int_{\Omega}\vert \sigma u_{\epsilon}\vert^{p}dx \leq \epsilon^{\frac{n -ps}{p(p -1)}}\int \left( \epsilon^{p^{\prime}} +\vert x\vert^{p^{\prime}}\right)^{-\frac{n -ps}{p}} =O(\epsilon^{\frac{n -ps}{p -1}})\\
\end{aligned}
\end{equation}
Also  by \cite{mosconi2016brezis} lemma 2.2 we have
\begin{equation}\label{es02}
\int_{\Omega} u_{\epsilon}^{p_s^* -1} = \epsilon^{\frac{n -sp}{p}}\int u(x)^{p_s^* -1} =O(\epsilon^{\frac{n -sp}{p}}).
\end{equation}
Combining \ref{eee20}, \ref{eee30} and \ref{slc} we get
\begin{equation*}
\begin{aligned}
& J_{\lambda}(u_0 +ru_{\epsilon,\sigma}) \leq \sup_{t \geq 0}\left\lbrace \frac{t^{p}}{p}\left( a\Vert u_{\epsilon,\sigma}\Vert^{p} +\Vert u_{\epsilon,\sigma}\Vert^{p}_{p}\right) -\lambda t^{p_s^*}\int F(x,u_{\epsilon,\sigma})dx\right\rbrace \\
&-\lambda C^{\prime}_{7} t^{p_s^* -1}\int f(x, u_{\epsilon,\sigma}) dx +bC_{\theta}R^{p\theta} +b D_{\theta} t^{p\theta}\Vert u_{\epsilon,\sigma}\Vert^{p\theta} +O(\epsilon^{\frac{n -ps}{p -1}})\\
\end{aligned}
\end{equation*} 
Also by \ref{fd} and \ref{es02} there exist $C_8 >0$ such that 
\begin{equation}\label{es04}
f(x,u_{\epsilon,\sigma}) \geq C_{8} \Vert u_{\epsilon,\sigma}\Vert^{p_s^* -1} =O(\epsilon^{\frac{n -sp}{p}})
\end{equation}
and by \ref{fd1} and \ref{es01} there exist $ C_9 >0$ such that 
\begin{equation}\label{es05}
F(x,u_{\epsilon,\sigma}) \geq C_{9}\Vert u_{\epsilon,\sigma}\Vert^{p_s^*} = C_{9}  S_{p}^{\frac{n}{ps}} +O(\epsilon^{\frac{n}{p -1}}).
\end{equation}
therefore by combining \ref{es04}, \ref{es05} and \ref{es03} we obtain
\begin{equation*}
\begin{aligned}
&\sup_{t \geq 0}\left\lbrace \frac{t^{p}}{p}\left( a\Vert u_{\epsilon,\sigma}\Vert^{p} +\Vert u_{\epsilon,\sigma}\Vert^{p}_{p}\right) -\lambda t^{p_s^*}\int F(x,u_{\epsilon,\sigma})dx\right\rbrace\\
&=\frac{s}{n}\left( \frac{1}{\lambda p_s^*}\right)^{\frac{p}{p_s^* -p}}\left(  a\Vert u_{\epsilon,\sigma}\Vert^{p} +\Vert u_{\epsilon,\sigma}\Vert^{p}_{p}\right)^{\frac{p_s^*}{p_s^* -p}}\left( \int F(x,u_{\epsilon,\sigma})dx\right)^{-\frac{p}{p_s^* -p}}\\
&\leq \frac{s}{n}\left( \frac{1}{\lambda p_s^*}\right)^{\frac{p}{p_s^* -p}}\left( a S_{p}^{\frac{n}{ps}} +O(\epsilon^{ \frac{n -ps}{p -1}}) +O(\epsilon^{\frac{n}{p -1}})\right)^{\frac{p_s^*}{p_s^* -p}}\left(C_{9} S_{p}^{\frac{n}{ps}} +O(\epsilon^{ \frac{n}{p -1}})\right)^{-\frac{p}{p_s^* -p}}\\
&= \frac{s}{n}\left( \frac{1}{\lambda p_s^*}\right)^{\frac{p}{p_s^* -p}}\left(  S_{p}^{\frac{n}{ps}} +O(\epsilon^{ \frac{n -ps}{p -1}}) +O(\epsilon^{\frac{n}{p -1}})\right)^{\frac{n}{ps}}\left(C_{9} S_{p}^{\frac{n}{ps}} +O(\epsilon^{ \frac{n}{p -1}})\right)^{-\frac{n}{sp_s^*}}\\
&= \frac{s}{n} a^{\frac{n}{ps}} \left( \frac{1}{C_{9}  \lambda p_s^*}\right)^{\frac{n}{s p_s^*}} \left( S_{p}^{\frac{n}{ps}} +O(\epsilon^{ \frac{n -ps}{p -1}})\right)^{\frac{n}{ps}}\left(S_{p}^{\frac{n}{ps}} +O(\epsilon^{ \frac{n}{p -1}})\right)^{-\frac{n}{sp_s^*}}\\
&=\frac{s}{n} a^{\frac{n}{ps}} (p_s^* \lambda)^{-\frac{n}{sp_s^*}} S_{p}^{\frac{n}{ps}}\left( \frac{1}{C_9}\right)^{\frac{n}{sp_s^*}} +O(\epsilon^{\frac{n -ps}{p -1}})
\end{aligned}
\end{equation*}
for this to be less then the first term in  $ c_{\lambda}$ it must be $  (C_9)^{-\frac{n}{sp_s^*}} < \gamma^{-\frac{n}{sp_s^*}},$ and by \ref{fd1} we can choose 
$ C_9$ sufficiently large. Now taking $ b =\epsilon^{m}, m > \frac{n -ps}{p -1} $ and using \ref{eee50} we find
\begin{equation*}
\begin{aligned}
& J_{\lambda}(u_0 +ru_{\epsilon,\sigma}) \leq \frac{s}{n} a^{\frac{n}{ps}} (p_s^* \lambda)^{-\frac{n}{sp_s^*}} S_{p}^{\frac{n}{ps}}C_9^{-\frac{n}{sp_s^*}} +O(\epsilon^{\frac{n -ps}{p -1}}) -\lambda C_5^{\prime} t_3^{p_s^* -1}\int f(x, u_{\epsilon,\sigma}) dx \\
&+bC_{\theta}R^{p\theta} +b D_{\theta} t_4^{p\theta}\Vert u_{\epsilon,\sigma}\Vert^{p\theta} +O(\epsilon^{\frac{n -ps}{p -1}})\\
& \leq \frac{s}{n} a^{\frac{n}{ps}} (p_s^* \lambda)^{-\frac{n}{sp_s^*}} S_{p}^{\frac{n}{ps}}C_9^{-\frac{n}{sp_s^*}} +O(\epsilon^{\frac{n -ps}{p -1}})\\
& -\lambda C^{\prime\prime}_{5} O(\epsilon^{\frac{n -sp}{p}}) +\epsilon^{m}C_{\theta}R^{p\theta} +\epsilon^{m} D_{\theta} t_4^{p\theta}(S_{p}^{\frac{n\theta}{ps}} +O(\epsilon^{\frac{n -ps}{p -1}})) +O(\epsilon^{\frac{n -ps}{p -1}})\\
& \leq \frac{s}{n} a^{\frac{n}{ps}} (p_s^* \lambda)^{-\frac{n}{sp_s^*}} S_{p}^{\frac{n}{ps}}C_9^{-\frac{n}{sp_s^*}} +C^{\prime\prime\prime}\epsilon^{\frac{n -ps}{p -1}} -\lambda C^{\prime\prime}_{5}\epsilon^{\frac{n -sp}{p}}\\
& \leq \frac{s}{n} a^{\frac{n}{ps}} (p_s^* \lambda)^{-\frac{n}{sp_s^*}} S_{p}^{\frac{n}{ps}}C_9^{-\frac{n}{sp_s^*}} -\left( \lambda C^{\prime\prime}_{5}\epsilon^{\frac{n -sp}{p}} -C^{\prime\prime\prime}\epsilon^{\frac{n -ps}{p -1}}\right).
\end{aligned}
\end{equation*}
Now we choose $ \epsilon^{\prime} >0$ small enogh such that for all $ \epsilon <\epsilon^{\prime},$ we get
\begin{equation*}
\lambda C^{\prime\prime}_{5}\epsilon^{\frac{n -sp}{p}} -C^{\prime\prime\prime}\epsilon^{\frac{n -ps}{p -1}} = \epsilon^{{\frac{n -ps}{p}}} 
\left( \lambda  C^{\prime\prime}_{5} -C^{\prime\prime\prime}\epsilon^{\frac{n -ps}{(p -1)p}}\right) >0
\end{equation*}
and $ \lambda < \lambda_{****},$  since $ C^{\prime\prime}_{5} >0$ where
{\tiny
\begin{equation*}
\begin{aligned}
&\lambda_{****} =\frac{C^{\prime\prime\prime}}{ C^{\prime\prime}_{5}} \epsilon^{\frac{n -ps}{(p -1)p}} +\frac{1}{ C^{\prime\prime}_{5}}\epsilon^{ -\frac{n -ps}{p } +m\frac{\alpha -1}{p\theta +\alpha -1}}\left(\frac{p_s^* +\alpha -1}{p_s^*}\Vert u\Vert_{\infty}\vert \Omega\vert^{\frac{p_s^* +\alpha -1}{p_s^*}}S_{p}^{\frac{\alpha -1}{p}}\right)^{\frac{p\theta}{p\theta +\alpha -1}}\\
&\left( \frac{p_s^* -p\theta}{p_s^*p\theta}\right)^{\frac{\alpha -1}{p\theta +\alpha -1}}\left( \frac{\alpha}{1 -\alpha}\right)
\end{aligned}
\end{equation*}
}
Put
\begin{equation*}
\lambda^{***} =\min\left\lbrace \lambda^{**}, \lambda_{****} \right\rbrace
\end{equation*}
this finishes the proof
\end{proof}
now we prove the existence of second solutions in $ \mathcal{N}_{\lambda}^{-}.$ Let $ \lambda < \lambda^{***}$ by lemma \ref{l9} $ u_* \in U_1$ and $ u_* +l_0u_{\epsilon,\sigma} \in U_2,$ so there exist continous path connecting $ U_1$ and $ U_2$ in particular there exist $ t \in (0, 1)$ such that 
$ u_* +tl_0u_{\epsilon,\sigma} \in \mathcal{N}_{\lambda}^{-}$ which means that
$ m^{-} \leq J_{\lambda}(u_0 +tl_0u_{\epsilon, \sigma})$ on the other hand by lemma \ref{l10} and \ref{ek2} we have $ m^{-} <c_{\lambda}$ for 
$ \lambda \in (0, \lambda^{***}).$ Moreover by lemma \ref{l3} $ \mathcal{N}_{\lambda}^{-}$ is closed set in $ X_0$ so by corollary \ref{c1} we find the infinimum of  $ J_{\lambda}$ on $\mathcal{N}_{\lambda}^{-}$ using \ref{ek1} and \ref{ek2} for $  (u_n)_n \in \mathcal{N}_{\lambda}^{-}$ and $ u_n \rightharpoonup u_*$ for some $ u_* \in X_0,$ so we get that $ \lim_{n \to \infty} J_{\lambda}(u_n) =m^- $ therefore using proposition \ref{pr1} with $ m^- <c_{\lambda}$ yields 
 up to subsequence still denoted $ u_n$ that $ u_n \longrightarrow u_*$ strongly in $ X_0$ in addition $ \mathcal{N}_{\lambda}^{-}$ closed which means that 
 $ u_* \in \mathcal{N}_{\lambda}^{-}$ and $ J_{\lambda}(u_*) =m^{-},$ by the same argument above $ u_*$ is a weak solution to \ref{p} namely verifies \ref{eee60} yields that $ c(x)(u_*^+)^{-\alpha}\Phi \in L^{1}(\Omega)$ for any $ \Phi \in X_0,$ this and using lemma \ref{l2} we get that $ u_*$ is a nontrivial weak solution to problem \ref{p}, replacing $ \phi $ by $ (u_*)^-$ in \ref{weak} we get that $ \Vert (u_*)^-\Vert =0,$ that is  $ u_* \geq 0.$ Applying the maximum priciple we get that $ u_*$ is a positive solution to problem \ref{p0}. Moreover $ u_* $ and $ u^*$ are distict due to fact that 
 $ \mathcal{N}_{\lambda}^{-} \cap \mathcal{N}_{\lambda}^{+} =\emptyset.$ This completes the proof of theorem \ref{th2}.
\newpage
\addcontentsline{toc}{chapter}{Bibliography}
\bibliographystyle{acm}
\bibliography{NoteEllipticProblem}

\begin{thebibliography}{10}

\bibitem{arcoya2014multiplicity}
{\sc Arcoya, D., and Moreno-M{\'e}rida, L.}
\newblock Multiplicity of solutions for a dirichlet problem with a strongly
  singular nonlinearity.
\newblock {\em Nonlinear Analysis: Theory, Methods \& Applications 95\/}
  (2014), 281--291.

\bibitem{arora2020polyharmonic}
{\sc Arora, R., Giacomoni, J., Mukherjee, T., and Sreenadh, K.}
\newblock Polyharmonic kirchhoff problems involving exponential non-linearity
  of choquard type with singular weights.
\newblock {\em Nonlinear Analysis 196\/} (2020), 111779.

\bibitem{assunccao2017existence}
{\sc Assun{\c{c}}{\~a}o, R.~B., Miyagaki, O.~H., and Rodrigues, B.~M.}
\newblock Existence and multiplicity of solutions for a supercritical elliptic
  problem in unbounded cylinders.
\newblock {\em Boundary Value Problems 2017}, 1 (2017), 1--11.

\bibitem{autuori2015stationary}
{\sc Autuori, G., Fiscella, A., and Pucci, P.}
\newblock Stationary kirchhoff problems involving a fractional elliptic
  operator and a critical nonlinearity.
\newblock {\em Nonlinear Analysis 125\/} (2015), 699--714.

\bibitem{boccardo2012dirichlet}
{\sc Boccardo, L.}
\newblock A dirichlet problem with singular and supercritical nonlinearities.
\newblock {\em Nonlinear Analysis: Theory, Methods \& Applications 75}, 12
  (2012), 4436--4440.

\bibitem{brezis2011functional}
{\sc Brezis, H., and Br{\'e}zis, H.}
\newblock {\em Functional analysis, Sobolev spaces and partial differential
  equations}, vol.~2.
\newblock Springer, 2011.

\bibitem{brezis1983relation}
{\sc Br{\'e}zis, H., and Lieb, E.}
\newblock A relation between pointwise convergence of functions and convergence
  of functionals.
\newblock {\em Proceedings of the American Mathematical Society 88}, 3 (1983),
  486--490.

\bibitem{choi2019existence}
{\sc Choi, Q., Jung, T., et~al.}
\newblock Existence of solution for p-laplacian boundary value problems with
  two singular and subcritical nonlinearities.
\newblock {\em Boundary Value Problems 2019}, 1 (2019), 1--16.

\bibitem{di2012hitchhikers}
{\sc Di~Nezza, E., Palatucci, G., and Valdinoci, E.}
\newblock Hitchhikers guide to the fractional sobolev spaces.
\newblock {\em Bulletin des sciences math{\'e}matiques 136}, 5 (2012),
  521--573.

\bibitem{brezislieb}
{\sc Emelyanov, E., and Marabeh, M.}
\newblock On the brezis--lieb lemma and its extensions.
\newblock In {\em Operator Theory and Differential Equations}. Springer, 2021,
  pp.~25--35.

\bibitem{figueiredo2012multiplicity}
{\sc Figueiredo, G.~M., and Junior, J. R.~S.}
\newblock Multiplicity of solutions for a kirchhoff equation with subcritical
  or critical growth.
\newblock {\em Differential and Integral Equations 25}, 9/10 (2012), 853--868.

\bibitem{fiscella2019fractional}
{\sc Fiscella, A.}
\newblock A fractional kirchhoff problem involving a singular term and a
  critical nonlinearity.
\newblock {\em Advances in Nonlinear Analysis 8}, 1 (2019), 645--660.

\bibitem{fiscella2019nehari}
{\sc Fiscella, A., and Mishra, P.~K.}
\newblock The nehari manifold for fractional kirchhoff problems involving
  singular and critical terms.
\newblock {\em Nonlinear Analysis 186\/} (2019), 6--32.

\bibitem{ghanmi2016multiplicity}
{\sc Ghanmi, A., and Saoudi, K.}
\newblock A multiplicity results for a singular problem involving the
  fractional p-laplacian operator.
\newblock {\em Complex variables and elliptic equations 61}, 9 (2016),
  1199--1216.

\bibitem{ghanmi2016nehari}
{\sc Ghanmi, A., and Saoudi, K.}
\newblock The nehari manifold for a singular elliptic equation involving the
  fractional laplace operator.
\newblock {\em RN 1\/} (2016), 2.

\bibitem{giacomoni2017positive}
{\sc Giacomoni, J., Mukherjee, T., and Sreenadh, K.}
\newblock Positive solutions of fractional elliptic equation with critical and
  singular nonlinearity.
\newblock {\em Advances in Nonlinear Analysis 6}, 3 (2017), 327--354.

\bibitem{giacomoni2009multiplicity}
{\sc Giacomoni, J., and Saoudi, K.}
\newblock Multiplicity of positive solutions for a singular and critical
  problem.
\newblock {\em Nonlinear Analysis: Theory, Methods \& Applications 71}, 9
  (2009), 4060--4077.

\bibitem{godoy2017multiplicity}
{\sc Godoy, T., and Guerin, A.}
\newblock Multiplicity of positive weak solutions to subcritical singular
  elliptic dirichlet problems.
\newblock {\em Electronic Journal of Qualitative Theory of Differential
  Equations 2017}, 100 (2017), 1--30.

\bibitem{hirano2004existence}
{\sc Hirano, N., Saccon, C., and Shioji, N.}
\newblock Existence of multiple positive solutions for singular elliptic
  problems with concave and convex nonlinearities.
\newblock {\em Advances in Differential Equations 9}, 1-2 (2004), 197--220.

\bibitem{lei2015multiple}
{\sc Lei, C.-Y., Liao, J.-F., and Tang, C.-L.}
\newblock Multiple positive solutions for kirchhoff type of problems with
  singularity and critical exponents.
\newblock {\em Journal of Mathematical Analysis and Applications 421}, 1
  (2015), 521--538.

\bibitem{mosconi2016brezis}
{\sc Mosconi, S., Perera, K., Squassina, M., and Yang, Y.}
\newblock A brezis nirenberg result for the fractional p-laplacian.
\newblock {\em Calc. Var. Partial Differ. Equ 55}, 55 (2016), 105.

\bibitem{mukherjee2016fractional}
{\sc Mukherjee, T., and Sreenadh, K.}
\newblock Fractional elliptic equations with critical growth and singular
  nonlinearities.
\newblock {\em Electron. J. Differential Equations 54\/} (2016), 1--23.

\bibitem{saoudi2017multiplicity}
{\sc Saoudi, K., Agarwal, P., and Mursaleen, M.}
\newblock A multiplicity result for a singular problem with subcritical
  nonlinearities.
\newblock {\em Journal of Nonlinear Functional Analysis\/} (2017), 1--18.

\bibitem{servadei2012mountain}
{\sc Servadei, R., and Valdinoci, E.}
\newblock Mountain pass solutions for non-local elliptic operators.
\newblock {\em Journal of Mathematical Analysis and Applications 389}, 2
  (2012), 887--898.

\bibitem{servadei2015brezis}
{\sc Servadei, R., and Valdinoci, E.}
\newblock The brezis-nirenberg result for the fractional laplacian.
\newblock {\em Transactions of the American Mathematical Society 367}, 1
  (2015), 67--102.

\bibitem{sun2011exact}
{\sc Sun, Y., and Wu, S.}
\newblock An exact estimate result for a class of singular equations with
  critical exponents.
\newblock {\em Journal of Functional Analysis 260}, 5 (2011), 1257--1284.

\bibitem{tarantello1992nonhomogeneous}
{\sc Tarantello, G.}
\newblock On nonhomogeneous elliptic equations involving critical sobolev
  exponent.
\newblock In {\em Annales de l'Institut Henri Poincar{\'e} C, Analyse non
  lin{\'e}aire\/} (1992), vol.~9, Elsevier, pp.~281--304.

\bibitem{xiang2016existence}
{\sc Xiang, M., Zhang, B., and R{\u{a}}dulescu, V.~D.}
\newblock Existence of solutions for perturbed fractional p-laplacian
  equations.
\newblock {\em Journal of Differential Equations 260}, 2 (2016), 1392--1413.

\end{thebibliography}
\end{document}